\newcommand*\rel@kern[1]{\kern#1\dimexpr\macc@kerna}
\newcommand*\widebar[1]{%
  \begingroup
  \def\mathaccent##1##2{%
    \rel@kern{0.8}%
    \overline{\rel@kern{-0.8}\macc@nucleus\rel@kern{0.2}}%
    \rel@kern{-0.2}%
  }%
  \macc@depth\@ne
  \let\math@bgroup\@empty \let\math@egroup\macc@set@skewchar
  \mathsurround\z@ \frozen@everymath{\mathgroup\macc@group\relax}%
  \macc@set@skewchar\relax
  \let\mathaccentV\macc@nested@a
  \macc@nested@a\relax111{#1}%
  \endgroup
}
\newcommand{\ben}{\vspace{0mm}\begin{equation}}
\newcommand{\een}{\vspace{0mm}\end{equation}}
\newcommand{\be}{\vspace{0mm}\begin{equation*}}
\newcommand{\ee}{\vspace{0mm}\end{equation*}}
\newcommand{\bea}{\vspace{0mm}\begin{equation*}\begin{aligned}}
\newcommand{\eea}{\vspace{0mm}\end{aligned}\end{equation*}}
\newcommand{\bean}{\vspace{0mm}\begin{equation}\begin{aligned}}
\newcommand{\eean}{\vspace{0mm}\end{aligned}\end{equation}}
\newcommand{\Acal}{\mathcal{A}}
\newcommand{\Ecal}{\mathcal{E}}
\newcommand{\B}{\mathcal{B}}
\newcommand{\E}{\mathbb{E}}
\newcommand{\Pro}{\mathbb{P}}
\newcommand{\N}{\mathbb{N}}
\newcommand{\R}{\mathbb{R}}
\newcommand{\un}{\mathbf{1}}
\newcommand{\pib}{\widebar{\pi}}
\newcommand{\norm}[1]{||#1||}
\newcommand{\Ze}{Z^{\varepsilon}}
\renewcommand{\ne}{N^{\varepsilon}}
\newcommand{\he}{H^{\varepsilon}}
\newcommand{\pie}{\pi^{\varepsilon}}
\newcommand{\av}{\widebar{N}}
\newtheorem{theorem}{Theorem}[section]
\newtheorem*{theorem*}{Theorem}
\newtheorem{thmx}{Theorem}
\newtheorem{cor}[theorem]{Corollary}
\newtheorem{rem}[theorem]{Remark}
\newtheorem*{rem*}{Remark}
\newtheorem{prop}[theorem]{Proposition}
\begin{document} 
\title{A piecewise deterministic model for a prey-predator community}
\author[1]{Manon Costa \footnote{Institut de Math\'ematiques de Toulouse.  CNRS UMR 5219, \\
Universit\'e Paul Sabatier, 118 route de Narbonne, 31062 Toulouse cedex 09, France\\ Email: manon.costa@math.univ-toulouse.fr}}
\date{1/03/2016}
\maketitle
\begin{abstract}
We are interested in prey-predator communities where the predator population evolves much faster than the prey's (e.g. insect-tree communities). We introduce a piecewise deterministic model for these prey-predator communities
that arises as a limit of a microscopic model when the number of predators goes to infinity. We
prove that the process has a unique invariant probability measure and that it is exponentially ergodic. Further on, we rescale the predator dynamics in order to model predators of smaller size. This slow-fast system converges to a community process in which the prey dynamics is averaged on the predator equilibria. This averaged process admits an invariant probability measure which can be computed explicitly. We use numerical simulations to study the convergence of the invariant probability measures of the rescaled processes.
\end{abstract}

\medskip \noindent\emph{Keywords:} Prey-predator communities; Piecewise Deterministic Markov Processes (PDMP); Irreducibility; Ergodicity; Invariant measures; Slow-fast systems; Averaging techniques.

\medskip
\noindent\emph{AMS subject classification:} 60J25; 60J75; 92D25.

\subsection*{ERRATUM - December 2025 } The statement and the proof of Theorem 2.2 concerning the ergodicity of a piecewise deterministic process $(Z_t)_{t\ge0}$ defined on $\N^*\times[0,\infty[$ is wrong stated as it is. A corrigendum can be found here (\url{https://hal.science/hal-04839491v1}) and is currently under review. \\

\section{Introduction}
Prey-predator communities represent elementary blocks of complex ecological communities and their dynamics has been widely studied. The coupled dynamics of the prey and predator populations is often described as a coupled system of differential equations. The most famous of them was introduced by Lotka \cite{lotka} and Volterra \cite{volterra} in the 1920's.
There exist also stochastic models for these prey-predator communities as coupled birth and death processes (see Costa and al. \cite{costa2014}) or as stochastic perturbations of deterministic systems (e.g. Rudnicki and  Pich\'or \cite{rudnicki2007influence}). 

In this paper we are interested in prey-predator communities in which the predator dynamics is much faster than the prey one. Such communities are common in the wild, especially if we consider the interaction between trees and insects (see Robinson and al. \cite{Umea} for the study of Aspen canopy and its arthropod community or Ludwig and al. \cite{ludwig1978qualitative} for the interaction between spruce budworm and the forest). 
In these communities, the number of predators is much larger than the prey number and the predator mass is smaller than the prey one. As a consequence, the reproduction and death events will be more frequent in the predator population  than in the prey population. Such slow-fast scales have been studied in some Lotka-Volterra systems, mainly in the case of periodic solutions (e.g. \cite{RINALDI1992287}). In the following, we study two successive scaling (large predator population and small predator mass) of a microscopic model of the prey-predator community and introduce new stochastic processes for slow-fast prey-predator systems which corresponds to these scalings limits. In particular one interest of our work is that these processes never assume that the prey population size is infinite, as it is the case for models based on ordinary differential equations.

We introduce a hybrid model for the demographic dynamics of the community where the prey population evolves according to a birth and death process while the dynamics of predators is driven by a differential equation. The community has a deterministic dynamics between the jumps of the prey population. 
This piecewise deterministic process arises as limit of a prey-predator birth and death process, when the number of predators tends to infinity while the prey number remains finite.
Such piecewise deterministic Markov processes (PDMP in short) where introduced by Davis in 1984 \cite{davis1984piecewise,davis1993markov}. They are used to model different biological phenomena. As an example, the dynamics of chemostats has been described 
as a piecewise deterministic model
 \cite{crump1979some,campillo2011stochastic, collet2013stochastic,genadot2014multi}. Chemostats, in which bacteria evolve in an environment with controlled resources, correspond to the opposite setting where the prey population (the resources) evolves faster than their predators (the bacteria). Other examples can be found in neuroscience, to model the dynamics of electric potentials in neurons (see Austin \cite{austin2008emergence}) or in molecular biology, where piecewise deterministic processes appear as various limits of individual based models of gene regulatory networks when the different interactions happen on different time scales (see Crudu and al. \cite{crudu2012convergence}).

In this paper we study the long time behavior of the prey-predator community process. A vast literature concerns the long time behavior of continuous time Markov processes. In the setting of piecewise deterministic processes,
general results have been obtained by Dufour and Costa on the relationships between the stationary behavior of the process and a sampled chain (see \cite{dufour1999stability,costa2008stability}). We focus on the theory of Harris-recurrent processes (see Meyn and Tweedie \cite{meyntweedie2,meyntweedie3} and references therein) that relies on Foster-Lyapunov inequalities. These inequalities satisfied by the infinitesimal generator of the process, ensure that the populations do not explode in some sense. Combined with irreducibility properties, they ensure the existence of a unique invariant probability measure and that the semi-group of the process converges at exponential rate to this measure. The irreducibility of the process is non trivial because the randomness only derives from the jumps of the slow component. Our proof relies on a fine analysis of the trajectories of the process. 

Further on, we rescale the predator dynamics by dividing the coefficients of the predator differential equation by a small parameter $\varepsilon$. This scaling derives from the metabolic theory and illustrates the fact that the predator mass goes to $0$ while the prey mass remains constant. The metabolic theory links the mass of individuals with their metabolic rates. Numerous experimental studies display relationships between the individual mass and the birth and death rates or the community carrying capacity (see Brown and al. \cite{brown2004toward}, Damuth \cite{Damuth81}). Here, we simplify these relationships by assuming that the predator metabolic rates increase as the invert of their mass. 
This slow-fast system converges as $\varepsilon$ goes to $0$ to an averaged process. In the averaged community, the predator population will always be at an equilibrium that depends on the prey number. Therefore the prey population evolves as a birth and death process where the predator impact is constant between jumps.
In this case, computations concerning the stationary behavior of the averaged process are easier because the community is fully described by the discrete dynamics of the prey population.

This paper is organized as follows. In section \ref{sec:model} we present the piecewise deterministic process and the main results of this article. We give the first properties of the piecewise deterministic prey-predator community process and explain how it derives from an individual based prey-predator community process. In section \ref{sec:ergodicity} we study the ergodic properties of {the prey-predator community process}. These properties derive from a Foster-Lyapunov inequality and the irreducibility of the continuous time process and of discrete time samples. In section \ref{sec:averaged} we rescale the dynamics of predators and prove the convergence of the slow-fast prey-predator community to the averaged process. We prove that this averaged community admits an invariant distribution and study the convergence of the sequence of invariant measures of the slow-fast process as $\varepsilon\to0$ with numerical simulations.
Finally we discuss in section \ref{sec:discuss} our results in view of biological and ecological applications.
\section{Model and main results}
\label{sec:model}
\subsection{The piecewise deterministic model}
\label{subsec:model}
We consider a community of prey individuals and predators in which the predator dynamics is faster than the prey dynamics. The community is described at any time by a vector $Z_t=(N_t,H_t)$ where $N_t\in \N$ is the number of living prey individuals at time $t$ and $H_t\in\R_+$ is the density of predators.

We assume that the prey population evolves according to a birth and death process. The individual birth rate is denoted by $b>0$, the individual death rate by $d\ge0$. The logistic competition among the prey population is represented by a parameter $c>0$. The predation intensity exerted at time $t$ on each prey individual is $B H_t$.

The predators density follows a deterministic differential equation whose parameters depend on the prey population. The individual birth rate at time $t$ is $rB N_t$. It is proportional to the amount of prey consumed by the predator. The parameter $r\in(0,1)$ represents the conversion efficiency of prey biomass into predator biomass. The predator individual death rate $D+CH_t$ includes logistic competition among predators ($D\ge0$, $C>0$).

The community dynamics is given by the differential equation
 \ben
\label{eqdiff}
\frac{d}{dt}H_t=H_t(rB N_t -D-C H_t),
\een
coupled with the jump mechanism
\bean
\label{sauts}
\Pro(N_{t+s}=j| &N_t=n, H_t) = bn s+o(s) \quad\quad \text{if } j=n+1, n\ge1,\\
&= n(d+cn+BH_t)s+ o(s) \quad\quad \text{if } j=n-1, n\ge 2,\\
&= 1-(b+d+cn+BH_t)ns +o(s) \quad\quad\text{if } n=j , n\ge2,\\
&= 1-bs +o(s) \quad\quad\text{if } n=j=1,\\
&=0\quad\quad \text{otherwise}.
\eean
Between the jumps of the prey population process $N$, the dynamics of the predator density $H$ is deterministic. If the process $Z_t$ is at a point $(n,h)\in \N^*\times\R_+$ after a jump, then the predator dynamics is governed by the flow $\phi_n$ associated with equation \eqref{eqdiff}. More precisely, $\phi_n$ satisfies:
\ben
\label{eqdiff-phin}
\frac{d}{dt}\phi_n(h,t)=\phi_n(h,t)(rBn-D-C\phi_n(h,t)),\quad \phi_n(h,0)=h.
\een
Then for all $t\ge0$,
\ben
\label{phi_n}
\phi_n(h,t)=\frac{h e^{t(rB n-D )}}{1+\frac{hC }{rB n-D }(e^{t(rB n-D)}-1)} .
\een
For $h>0$, the solution $\phi_n(h,t)$ remains positive for all $t\ge0$ and converges as $t\to\infty$ toward an equilibrium $ h^*_n$ given by
\ben
\label{equilibre}
h^*_n=\frac{rBn-D}{C}\vee 0,
\een
where $a\vee b $ stands for the maximum of $a$ and $b$.
For sake of simplicity we introduce the global flow on $\N^*\times\R_+$  $\phi((n,h),t)=(n,\phi_n(h,t))$ for $(n,h)\in\N^*\times\R_+$ and $t\ge0$.
\\

In the following, the state space of the prey-predator process is denoted by
$E=\N^*\times\R_+$, we also define the subset $E'=\N^*\times(h^*_1,+\infty)$.
A generic point $z\in E$ is a vector $(n,h)$ with $n\in\N^*$ and $h\in \R_+$.
The process $Z_t=(N_t,H_t)_{t\ge0}$ belongs to the class of \textit{Piecewise Deterministic Markov Processes} introduced by Davis (see \cite{davis1993markov}). It is a $E$-valued Markov process whose infinitesimal generator 
\bean
\label{gene}
\mathcal{A}f(n,h)&=h(rBn-D-Ch)\partial_2 f(n,h)+\Bigl( f(n+1,h)-f(n,h)\Bigr) bn\\
& + \Bigl(f(n-1,h)-f(n,h) \Bigr) n(d+cn+Bh)\un_{n\ge2}.
\eean
is well defined for functions $f :\N^*\times\R_+ \to \R$ bounded measurable, continuously differentiable with respect to their second variable with bounded derivative.
The domain of the extended generator \eqref{gene} has been characterized by Davis (Theorem 26.14 in \cite{davis1993markov}).\\
We denote by $P_t$ the transition semi-group and by $\Pro_{(n,h)}$ (or $\Pro_z$) the law of the process with initial condition $z=(n,h)\in E$. 
\\

\begin{rem}
\label{rem_migr}
In this model, we assume that the prey population cannot become extinct since the death rate is $0$ when there is only one prey individual left. When this assumption is not satisfied, the prey population process can be dominated by a population process without predator which evolves as a logistic birth and death process. It is thus absorbed in $0$ in finite time.
The non-extinction assumption for the prey population is biologically relevant for trees-insects communities for example where the tree population rarely disappears thanks to a migration of trees (e.g. seeds driven by the wind). Here we chose to replace the migration probability of new prey individuals by the non extinction of the prey population. This choice allows to describe the prey-predator dynamics only with individual metabolic parameters such as birth and death rates. However, it is possible to include explicitly a migration at rate $m>0$ in the prey population. We therefore define an alternative process $(N^{(m)},H^{(m)})$ whose infinitesimal generator is given by
\bean
\label{gene_migr}
\mathcal{A}^{(m)}f(n,h)&=h(rBn-D-Ch)\partial_2 f(n,h)+\Bigl( f(n+1,h)-f(n,h)\Bigr) (bn+m)\\
& + \Bigl(f(n-1,h)-f(n,h) \Bigr) n(d+cn+Bh).
\eean
The process $(N^{(m)},H^{(m)})$ is a piecewise deterministic Markov process taking its value in $E^{(m)}=E\cup\{0\}\times \R_+$.
In the sequel, we will mention when our results hold for this alternative model including migration and explain the modification induced by the migration.\end{rem}

\subsection{Main results}
\label{subsec:main_results}
Our main questions on the prey-predator process are twofold. First we are interested in the long time behavior of the process $Z$. 
\begin{theorem}
\label{thm:exp-ergodicity_intro}
The community process $(Z_t)_{t\ge0}$ is exponentially ergodic. It converges toward its unique invariant probability measure $\pi$ at an exponential rate.
There exist $0<\rho<1$ and $0<R<\infty$ such that, for all $ z=(n,h)\in E'$,
\be
\label{expergodic_intro}
\norm{P_t(z,.)-\pi}_{TV} \le R\rho^t (n+hr^{-1}),
\ee
\end{theorem}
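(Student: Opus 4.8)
The plan is to apply the Meyn--Tweedie theory of geometrically ergodic Markov processes (as in \cite{meyntweedie3}), whose three standard ingredients are a Foster--Lyapunov drift inequality, $\psi$-irreducibility, and the petiteness of compact sets. The shape of the bound \eqref{expergodic_intro} strongly suggests that the relevant Lyapunov function is essentially $V(n,h)=n+hr^{-1}$ (shifted by a constant if one wants $V\ge1$), since the factor $n+hr^{-1}$ is exactly $V(z)$.

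First I would establish the drift condition by applying the generator \eqref{gene} to $V(n,h)=n+hr^{-1}$, using $\partial_2 V=r^{-1}$ and $V(n\pm1,h)-V(n,h)=\pm1$. For $n\ge2$ this gives
\begin{equation*}
\mathcal{A}V(n,h)=\bigl(Bnh-r^{-1}Dh-r^{-1}Ch^2\bigr)+bn-n(d+cn+Bh)=-cn^2+(b-d)n-r^{-1}Ch^2-r^{-1}Dh.
\end{equation*}
The crucial point is the cancellation between the predation gain $r^{-1}\cdot rBnh=Bnh$ arising from the predator drift and the predation loss $-Bnh$ coming from the prey death rate; this cancellation is precisely why the weight $r^{-1}$ sits in front of $h$ in \eqref{expergodic_intro}. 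The surviving expression is concave-quadratic and tends to $-\infty$ in each variable, so $\mathcal{A}V\le -\gamma V+K\un_{C}$ for some rate $\gamma>0$, a finite $K$, and a compact set $C$; the boundary case $n=1$ alters only a compact region and is absorbed into the $\un_C$ term.

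The main obstacle is irreducibility, since the randomness enters only through the jumps of $N$ while $H$ is rigidly transported between jumps by the deterministic flow $\phi_n$. Here I would exploit that the equilibrium $h^*_n=(rBn-D)/C\vee0$ is strictly increasing in $n$. Starting from any $z\in E'$, holding the chain at $n=1$ drives $H$ strictly downward toward $h^*_1$ (without ever reaching it, which is why $E'$, where $h>h^*_1$, is forward-invariant and is the natural irreducibility class), while forcing upward jumps and holding at a large $n$ drives $H$ upward toward the higher equilibrium $h^*_n$. By alternating such phases and tuning the continuously-distributed holding times, one can steer $H$ into any prescribed open interval of $(h^*_1,\infty)$ while placing $N$ at any prescribed integer. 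Each prescribed finite sequence of up/down jumps with holding times ranging over an open set of durations carries strictly positive probability, and the associated composition of flow maps has a nondegenerate derivative in the holding times, so the time-$t$ kernel possesses an absolutely continuous component with locally positive density. This yields $\psi$-irreducibility with $\psi$ equivalent to counting measure on $\N^*$ tensored with Lebesgue measure on $(h^*_1,\infty)$.

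Finally I would upgrade this to petiteness: the absolutely continuous, locally positive component of the sampled kernels shows that every compact subset of $E'$ is petite (indeed small for a suitably sampled chain). Combining the drift inequality of the second step with $\psi$-irreducibility and petiteness, the continuous-time Meyn--Tweedie theorem gives a unique invariant probability measure $\pi$ together with $V$-geometric ergodicity, i.e.\ $\norm{P_t(z,\cdot)-\pi}_{TV}\le R\rho^t V(z)$ with $V(z)=n+hr^{-1}$, which is exactly \eqref{expergodic_intro}. I expect the genuinely hard step to be the irreducibility/reachability argument, both in proving that every open subset of $E'$ is charged from any starting point and in verifying the nondegeneracy that produces an absolutely continuous part of the kernel.
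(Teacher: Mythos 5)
Your proposal is correct and follows essentially the same route as the paper: the same Lyapunov function $V(n,h)=n+h/r$ with the same cancellation of the $Bnh$ terms (Proposition \ref{prop:FL}), irreducibility on $E'$ by steering trajectories with prescribed jump sequences and holding times (Theorems \ref{thm:irr+} and \ref{thm:phiirr}), and the nondegeneracy of the flow composition in the jump time --- which the paper verifies explicitly by computing $\frac{d}{ds}g_{(n,h,\Delta)}(s)<0$ for the one-jump part of the kernel --- to obtain an absolutely continuous component and hence an irreducible skeleton, as required by Theorem \ref{thm:histo2}. The only cosmetic difference is that the paper separates the continuous-time irreducibility (used for Harris recurrence) from the skeleton irreducibility (used for the exponential rate), whereas you fold both into the density argument for the sampled kernel.
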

Section \ref{sec:ergodicity} is devoted to the proof of the exponential ergodicity of $Z$. Our theorem relies on the theory of Harris recurrent processes, whose main results are recalled in Section \ref{subsec:rappel}. 
In the setting of piecewise deterministic processes these results have been used to derive ergodic properties of different processes (e.g. an additive
increase multiplicative decrease process  \cite{grigorescu2014critical}, a stress release process \cite{last2004ergodicity}, or a wealth-employment process
\cite{bayer2011existence}). There exists also general results proven by \cite{benaim2015qualitative} in the specific case where the deterministic dynamics admits a compact positive invariant set. The case of the prey-predator process $Z=(N,H)$ is more complex since the process is in dimension 2 and neither the jump rates nor the deterministic trajectories are bounded.
\medskip

Our second main result concerns a scaling limit of the process $Z$ corresponding to the biological assumption that the predator mass is small. We introduce a sequence of processes $Z^{\varepsilon}$ for the rescaled parameters 
$r^{\varepsilon}=r/\varepsilon$, $D^{\varepsilon}=D/\varepsilon$ and $C^{\varepsilon}=C/\varepsilon$. As $\varepsilon$ tends to $0$, the dynamics of the fast component $\he$ is accelerated between the jumps of the slow component $\ne$. Therefore we expect that in the slow-fast limit, the prey population only depends on the equilibrium $h^*_n$ of the predator density.
The following theorem is a simplified version of our convergence result stated in Theorem \ref{thm:cv-moy}
\begin{theorem}
 \label{thm:cv-moy_intro}
Fix $ T>0$ and assume $rB-D>0$. We suppose that the sequence of initial conditions $(\Ze_0)_{0<\varepsilon\le1}$ converges to $\widebar{Z}_0$ in law
and moreover that
\be
\label{hyp:moment2_intro}\sup_{0<\varepsilon\le1}\E((\ne_0)^4)<\infty
, \quad \sup_{0<\varepsilon\le1}\E((\he_0)^4)<\infty.
\ee
Then the sequence $\ne$ converges in law toward $\av$ in $\mathbb{D}([0,T],\N)$  as $\varepsilon\to 0$. \\
The process $\av$ is a pure jump process on $\N^*$ whose infinitesimal generator is well defined for every measurable and bounded function $f:\N^*\to\R$ by
\ben
\label{gen-lim_intro}
\mathcal{L} f(n)=(f(n+1)-f(n))bn+(f(n-1)-f(n)) n(d+cn+Bh^*_n)\un_{n\ge2}.\een
\end{theorem}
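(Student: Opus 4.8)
The plan is to prove the convergence by establishing tightness of the laws of $(\ne)_{0<\varepsilon\le1}$ in $\D([0,T],\N)$ and then identifying every subsequential limit as the unique solution of the martingale problem associated with $\mathcal{L}$. The starting point is the Dynkin decomposition for the rescaled process. Since a function $f$ depending only on $n$ is killed by the fast drift (which multiplies $\partial_2 f$), the generator of $\Ze$ acts on such $f$ by
\ben
\Ae f(n,h)=bn\bigl(f(n+1)-f(n)\bigr)+n(d+cn+Bh)\bigl(f(n-1)-f(n)\bigr)\un_{n\ge2},
\een
so that $\Ae f(n,h)-\mathcal{L}f(n)=nB(h-h^*_n)\bigl(f(n-1)-f(n)\bigr)\un_{n\ge2}$. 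By Dynkin's formula, for bounded $f:\N^*\to\R$ the process
\be
f(\ne_t)-f(\ne_0)-\int_0^t\mathcal{L}f(\ne_s)\,ds=M^{\varepsilon,f}_t+R^\varepsilon_t
\ee
splits into a martingale $M^{\varepsilon,f}$ and the remainder $R^\varepsilon_t=\int_0^t \ne_s B(\he_s-h^*_{\ne_s})\bigl(f(\ne_s-1)-f(\ne_s)\bigr)\un_{\ne_s\ge2}\,ds$. Everything reduces to showing that $R^\varepsilon$ vanishes as $\varepsilon\to0$, i.e. that the fast variable $\he$ sits, in a time-averaged sense, at the prey-dependent equilibrium $h^*_{\ne}$.

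First I would establish moment bounds uniform in $\varepsilon$: using that the prey jump rates make $\ne$ a logistically controlled birth-death process and that $\he$ is dominated in terms of $h^*_{\ne}=(rB\ne-D)/C$ (linear in $\ne$), the hypothesis \eqref{hyp:moment2_intro} propagates to $\sup_{0<\varepsilon\le1}\E\bigl[\sup_{s\le T}((\ne_s)^4+(\he_s)^4)\bigr]<\infty$. Together with the fact that $\ne$ jumps by $\pm1$ at rates bounded by $C(1+(\ne_s)^2+\ne_s\he_s)$, these bounds yield tightness of $(\ne)$ in $\D([0,T],\N)$ through the Aldous--Rebolledo criterion: the expected number of jumps on a small interval $[\tau,\tau+\delta]$ started at a stopping time is $O(\delta)$ uniformly in $\varepsilon$.

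The main obstacle is the averaging estimate controlling $R^\varepsilon$, and the key tool is a Lyapunov function adapted to the logistic flow. Using $rBn-D=Ch^*_n$, the flow rewrites as $h(rBn-D-Ch)=Ch(h^*_n-h)$, so the relative-entropy function $V(n,h)=h-h^*_n-h^*_n\log(h/h^*_n)\ge0$ satisfies, for its continuous part,
\be
\frac1\varepsilon\,h(rBn-D-Ch)\,\partial_2 V(n,h)=-\frac{C}{\varepsilon}\,(h-h^*_n)^2.
\ee
Applying $\Ae$ to $V$ and Dynkin's formula, the dissipation of order $1/\varepsilon$ absorbs the contributions coming from the jumps of $\ne$ (which shift $h^*_n$ by $rB/C$ and are weighted by the prey rates), yielding
\be
\frac{C}{\varepsilon}\,\E\int_0^T(\he_s-h^*_{\ne_s})^2\,ds\le \E\bigl[V(\ne_0,\he_0)\bigr]+C'\,\E\int_0^T\Phi(\ne_s,\he_s)\,ds,
\ee
where $\Phi$ grows at most polynomially (up to logarithmic factors) and is integrable uniformly in $\varepsilon$ by \eqref{hyp:moment2_intro}; hence $\E\int_0^T(\he_s-h^*_{\ne_s})^2\,ds=O(\varepsilon)$. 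A Cauchy--Schwarz argument combining this with the second moment bound on $\ne$ then gives $\E\bigl[\sup_{t\le T}|R^\varepsilon_t|\bigr]\le 2\|f\|_\infty B\,\E\int_0^T \ne_s|\he_s-h^*_{\ne_s}|\,ds=O(\sqrt\varepsilon)$. The technical point I expect to watch is the logarithmic term in $V$ near $h=0$: controlling $\E|\log \he_s|$ requires a lower moment estimate on $\he$ (equivalently one may replace $V$ by a smoothed quadratic distance and compensate the vanishing drift factor $h$ by the fact that the flow pushes small $h$ upward at exponential rate), which is precisely where the strength of the fourth-moment hypothesis is used.

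Finally I would identify the limit. Along a convergent subsequence $\ne\Rightarrow\av$, the martingale part $M^{\varepsilon,f}$ passes to a limit and $R^\varepsilon\to0$, so for every bounded $f$ the process $f(\av_t)-f(\av_0)-\int_0^t\mathcal{L}f(\av_s)\,ds$ is a martingale; the uniform fourth-moment bounds justify passing the unbounded functional $\mathcal{L}f$ through the limit as well as the uniform integrability of the martingales. Since $\mathcal{L}$ generates a birth-death chain on $\N^*$ with birth rate $bn$ and death rate $n(d+cn+Bh^*_n)\un_{n\ge2}$, which is non-explosive (the linear birth rate precludes explosion while the $cn^2$ death term dominates), its martingale problem is well-posed. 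Uniqueness then forces all subsequential limits to coincide with the law of $\av$, and the whole family $(\ne)$ converges in law in $\D([0,T],\N)$.
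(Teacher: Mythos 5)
Your proposal is correct in outline and shares the paper's global architecture: uniform-in-$\varepsilon$ moment bounds obtained by domination, Aldous tightness for $(\ne)$, and identification of every subsequential limit through the martingale problem for $\mathcal{L}$, whose well-posedness is established exactly as in the paper (via the non-explosive birth--death structure and a representation by a Poisson-driven equation). Where you genuinely diverge is in the key averaging lemma that kills the remainder $R^\varepsilon_t=\int_0^t \ne_s B(\he_s-h^*_{\ne_s})\bigl(f(\ne_s-1)-f(\ne_s)\bigr)\un_{\ne_s\ge2}\,ds$. The paper proves the pathwise estimate of Proposition \ref{prop:unif-conv}, $|\phi^\varepsilon_n(h,t)-h^*_n|\le|h-h^*_n|e^{-\delta t/\varepsilon}$ with $\delta<Ch^*_1$ uniform over $E'$, then decomposes $R^\varepsilon$ over the inter-jump intervals of $\ne$, integrates the exponential to extract a factor $\varepsilon/\delta$ per interval, and controls the resulting sum by showing $\E\bigl(\sup_{t\le T}(\ne_t)^2 J^\varepsilon_T\bigr)<\infty$ for the jump count $J^\varepsilon_T$ via Gronwall --- this jump-counting step is where the fourth moments are spent. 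You instead use the relative-entropy Lyapunov function $V(n,h)=h-h^*_n-h^*_n\log(h/h^*_n)$, whose dissipation $-\tfrac{C}{\varepsilon}(h-h^*_n)^2$ under the fast drift gives $\E\int_0^T(\he_s-h^*_{\ne_s})^2\,ds=O(\varepsilon)$, and conclude by Cauchy--Schwarz. This is a legitimate and arguably more robust alternative: it needs neither the closed form \eqref{phi_n} of the flow nor the jump-counting estimates, and it yields the quantitative rate $O(\sqrt{\varepsilon})$ for $\E[\sup_{t\le T}|R^\varepsilon_t|]$. One point you should make explicit rather than leave as a caveat: both arguments require $\he$ to stay bounded away from $0$, which in the paper is assumption \eqref{hyp2}, i.e.\ the restriction to initial data in $E'=\N^*\times[h^*_1,\infty)$; this set is invariant for the dynamics and is implicit in the simplified statement you were given. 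On $E'$ one has $\he_s\ge h^*_1>0$ for all $s$, so your logarithmic term is harmless, the jump contributions to $\Ae V$ are polynomially bounded and integrable by the moment hypotheses, and $\E[V(\Ze_0)]<\infty$; the ``technical point'' you flag is thus resolved by invoking the invariance of $E'$ rather than by any separate lower moment estimate on $\he$.
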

The proof of the convergence, stated in Section \ref{sec:averaged} relies on a compactness-identification technique. The result proven in Section \ref{sec:averaged}, Theorem \ref{thm:cv-moy}, also states the convergence in law of the sequence of occupation measures associated with the fast component $\he$ (see \cite{kurtz1992averaging}). The main interest of this result is that in the limit, the study of the prey-predator community is simplified, since it is entirely described by the one dimensional birth and death process $\av$. 
\subsection{Pathwise construction and first properties}
\label{subsec:1stproperties}

\noindent Following Fournier and M\'el\'eard (\cite{FM04}) we construct a trajectory of the prey-predator process $Z$ as a solution of a stochastic differential equation driven by Poisson point processes. Let $Q_1(ds,du)$ and $Q_2(ds,du)$ be two independent Poisson point measures on $\R_+\times\R_+$ with intensity $dsdu$ the product of Lebesgue measures.
We define for any initial condition $(n,h)\in E$ the coupled dynamics
\ben
\label{eq:poisson}
 \begin{aligned}
&N_t=n+\int_0^t\int_{\R_+} \un_{u \le b N_{s-}} Q_1(ds,du)\\
&\hspace{2cm}-\int_0^t\int_{\R_+} \un_{u \le N_{s-}(d+cN_{s-}+BH_{s-})\un_{N_{s-}\ge2}} Q_2(ds,du),\\
&\frac{d}{dt}H_t=H_t\bigl( rBN_t-D-CH_t\bigr),\quad \quad H_0=h.
\end{aligned}\een
A unique solution of these equations exists as long as the number of individuals remains finite.
\begin{theorem}
\label{thm:1}
Under the assumption that there exists $p\ge1$ such that 
\be
\label{hyp:moment}
\E\Bigl((N_0)^p\Bigr)<\infty \quad \text{and}\quad \E\Bigl((H_0)^p\Bigr)<\infty,
\ee
\begin{enumerate}[i)]
\item For all $T>0$
\be
\label{csq:moment}
\E\Bigl(\sup_{t\in[0,T]}(N_t)^p\Bigr)<\infty \quad \text{and}\quad \E\Bigl(\sup_{t\in[0,T]}(H_t)^p\Bigr)<\infty,
\ee
\item  If $p\ge1$, there exists a unique solution $(Z_t)_{t\ge0}\in\mathbb{D}(\R_+, E)$ of \eqref{eq:poisson}. Its infinitesimal generator is given by \eqref{gene} for any bounded measurable functions $f$ with $f(n,.)\in\mathcal{C}^1_b(\R_+)$ for all $n\in\N^*$.\\
Moreover, the process $Z_t$ is a Feller process in the sense that for any $g:E\to\R$ continuous and bounded, the function $z\mapsto \E_z(g(Z_t))$ is continuous and bounded on $E$, $\forall t\ge0$.
\item If $p\ge2$, for all bounded measurable functions $f$ with $f(n,.)\in\mathcal{C}^1_b(\R_+)$ for all $n\in\N^*$ and for all $z\in\N^*\times\R_+$,
\be
M_t^f=f(Z_t)-f(z)-\int_0^t \Acal f(Z_s)ds,
\ee
is a $L^2-$martingale starting at $0$ with quadratic variation
\bea
\langle M^f\rangle_t =\int_0^t &\Bigl(f(N_s+1,H_s)-f(N_s,H_s)\Bigr)^2 bN_s \\&+\Bigl(f(N_s-1,H_s)-f(N_s,H_s)\Bigr)^2N_s(d+cN_s+BH_s)  ds.
\eea
\end{enumerate}
\end{theorem}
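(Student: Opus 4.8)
The plan is to establish the three parts in order, using the pathwise construction of \cite{FM04} as the backbone. Between two successive atoms of $Q_1$ and $Q_2$ the prey number $N$ is constant and $H$ follows the explicit logistic flow $\phi_n$ of \eqref{phi_n}, while each accepted atom triggers a birth or a death of $N$. Building the jump times $T_k$ inductively produces a solution of \eqref{eq:poisson} on $[0,T_\infty)$ with $T_\infty=\lim_k T_k$, and uniqueness is immediate because, given the realisation of the Poisson measures, every step of the induction is deterministic. Everything then reduces to proving that the solution does not explode, namely $T_\infty=\infty$, together with the quantitative moment bounds; the Feller property and the martingale statements follow by standard arguments once these are in hand.

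For the bound on $N$ in part i), I would localise at $\tau_K=\inf\{t:N_t\ge K\}$ and apply the generator \eqref{gene} to $f(n,h)=n^p$. For $p\ge1$ the map $s\mapsto s^p$ is convex and increasing, so the death contribution $n(d+cn+Bh)\bigl[(n-1)^p-n^p\bigr]\un_{n\ge2}$ is nonpositive and may simply be discarded; this is precisely what removes any dependence on $H$. The birth contribution obeys $bn\bigl[(n+1)^p-n^p\bigr]\le C_p n^p$, hence $\Acal(n^p)\le C_p n^p$, and Dynkin's formula together with Gronwall's lemma give $\E\bigl(N_{t\wedge\tau_K}^p\bigr)\le\E(N_0^p)e^{C_p t}$ uniformly in $K$. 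Letting $K\to\infty$ yields $\E(N_t^p)<\infty$ by Fatou, and this uniform bound prevents $N$ from reaching $\infty$ in finite time, forcing $T_\infty=\infty$ almost surely. For the supremum I would exploit that $N^p$ increases only at births, so that
\be
\sup_{s\le t}N_s^p\le N_0^p+\int_0^t\int_{\R_+}\un_{u\le bN_{s-}}\bigl[(N_{s-}+1)^p-N_{s-}^p\bigr]Q_1(ds,du);
\ee
taking expectations, compensating $Q_1$ and inserting the bound already obtained for $\E(N_s^p)$ gives $\E\bigl(\sup_{s\le t}N_s^p\bigr)<\infty$.

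The bound on $H$ is then a soft consequence of the geometry of the flow. For each fixed $n$ the logistic flow is monotone toward its equilibrium $h^*_n=\frac{rBn-D}{C}\vee0\le\frac{rB}{C}n$, so along any trajectory $H$ is only ever pushed toward the successive equilibria $h^*_{N_s}$ and can never exceed the largest of them, i.e. $\sup_{s\le t}H_s\le\max\bigl(H_0,\tfrac{rB}{C}\sup_{s\le t}N_s\bigr)$; raising to the power $p$ and taking expectations finishes part i). Parts ii) and iii) are now routine. Applying Dynkin's formula along the construction identifies the generator \eqref{gene} and the martingale $M^f$: the absolutely continuous motion of $H$ produces the transport term $h(rBn-D-Ch)\partial_2 f$ and, having finite variation, contributes only to the drift, while the atoms of $Q_1,Q_2$ produce the two difference terms, so $M^f$ is purely discontinuous with the announced quadratic variation. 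When $p\ge2$ and $f$ is bounded the squared jumps are bounded, $\E\langle M^f\rangle_t$ is controlled by $\E\int_0^t\bigl[N_s+N_s^2+N_sH_s\bigr]ds$ (the cross term by Cauchy--Schwarz), finite by part i), so $M^f$ is a genuine $L^2$ martingale. The Feller property reduces, since $n$ is integer valued, to continuity in $h$; coupling two initial heights through the same Poisson measures, the continuity of $\phi_n$ in $h$ and the continuity of the jump times in $h$ off a null set give $Z^{(n,h_k)}_t\to Z^{(n,h)}_t$ almost surely, and dominated convergence transfers this to $z\mapsto\E_z(g(Z_t))$.

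The main obstacle is the very first moment estimate, because both the jump rates and the flow are unbounded and the two coordinates are genuinely coupled, so $N$ and $H$ cannot be controlled independently a priori. The two observations that break this deadlock are that the predation term only raises the death rate and is therefore irrelevant to an upper bound on $N$, and that the logistic flow confines $H$ below $\frac{rB}{C}\sup N$; together they decouple the two estimates and collapse the whole of part i) onto the one-dimensional logistic birth--death bound for $N$.
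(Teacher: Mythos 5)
Your proposal is correct, and for parts i) and iii) it follows essentially the same route as the paper: the key decoupling observations are identical (predation only increases the death rate of $N$, so $N$ is controlled by a pure birth process at rate $bn$; the logistic flow \eqref{phi_n} confines $H$ below $H_0\vee\frac{rB}{C}\sup_{s\le t}N_s$). The paper simply quotes Theorem 3.1 of Fournier--M\'el\'eard for $\E(\sup_{t\le T}\widetilde N_t^{\,p})<\infty$ where $\widetilde N$ is the dominating pure birth process, whereas you rederive the bound by localization, Dynkin and Gronwall applied to $n\mapsto n^p$; both are fine, and your supremum estimate via the increasing Poisson integral is the standard way that citation is proved anyway.

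The one place where you genuinely diverge is the Feller property. The paper uses Davis's kernel-iteration method: it defines $G_\psi(z,t)=\E_z(g(Z_t)\un_{t\le T_1}+\psi(Z_{T_1},t-T_1)\un_{t>T_1})$, shows each iterate $G_\psi^k(\cdot,t)$ is continuous via the explicit density of $T_1$ and continuity of the flow, and then proves $G_\psi^k\to P_tg$ \emph{uniformly on compacts} by dominating all trajectories started in a compact $K$ by a single process started from the ``corner'' $(N_+,H_+)$ of $K$, so that $\Pro_z(T_k<t)\le\Pro(S_k<t)\to0$ uniformly. You instead propose a pathwise coupling: drive two initial heights by the same Poisson measures and argue that, off a null set, the accept/reject status of every relevant atom is eventually the same, so $Z^{(n,h_k)}_t\to Z^{(n,h)}_t$ a.s.\ and dominated convergence concludes. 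This is a legitimate and arguably more elementary route, but it is the step you should flesh out: the acceptance threshold $u\le N_{s-}(d+cN_{s-}+BH_{s-})$ depends on the trajectory itself, so the ``no atom lies exactly on the boundary'' statement has to be established inductively over the (a.s.\ finitely many) relevant atoms, conditioning at each step on the constructed past so that the $u$-coordinate of the next candidate atom is conditionally uniform and a.s.\ avoids the critical value. Once that is written out, together with the a.s.\ fact that the fixed time $t$ is not a jump time, your argument closes; the paper's approach buys uniformity on compacts for free at the price of the more elaborate kernel bookkeeping, while yours is shorter but requires this non-criticality lemma.
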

\begin{proof}
$(i)$ Let us remark that the process $(N_t, t\ge0)$ is stochastically dominated by a pure birth process ($\widetilde{N}_t,t\ge0)$ that jumps from $n$ to $n+1$ at rate $bn$. 
From Theorem 3.1 in \cite{FM04} we know that for all $T>0$
$\E\bigl(\sup_{t\in[0,T]}(\widetilde{N}_t)^p\bigr)<\infty$ and thus $\E\bigl(\sup_{t\in[0,T]}(N_t)^p\bigr)<\infty$.\\
Concerning the predator density, we notice that for all $(n, h)\in \N^*\times\R_+$ the solution $\phi_n(h,t)$ of \eqref{eqdiff-phin} satisfies 
\ben
\label{majoration-phi_n}
\forall t\ge 0, \quad 0<\phi_n(h,t)\le h\vee h^*_n,
\een
Since $h^*_n\le rBn/C$, we obtain that for all $t\ge0$
$$H_t\le H_0\vee \frac{rB}{C}\sup_{s\in[0,t]} N_s.$$
Then
$$ \E\Bigl(\sup_{t\in[0,T]}(H_t)^p\Bigr)\le \E\bigl((H_0)^p\bigr) +\frac{rB}{C}\E\Bigl(\sup_{t\in[0,T]}(N_t)^p\Bigr)<\infty.$$

The fact that the infinitesimal generator is given by \eqref{gene} and the proof of $(iii)$ can be easily adapted from \cite{FM04}.

It remains to prove that $Z_t$ is a Feller process. We adapt the method introduced by Davis \cite{davis1993markov}. The prey-predator community process differs from Davis' setting since the jump rates of the prey population are not bounded. However, we overcome this difficulty using the moment properties given in $(i)$.
We denote by $(T_1, T_2, \dots)$ the sequence of jump times of the prey population. It is always well defined since the jump rate admits a positive lower bound $b$.
Let $g\in\mathcal{C}_b(E)$ and $\psi \in\mathcal{C}_b(E\times\R_+)$. We define the application $G_{\psi}$ on $E\times\R_+$ by
\bea
G_{\psi}(z,t)&=\E_z\Bigl(g(Z_t)\un_{t\le T_1} +\psi(Z_{T_1},t-T_1)\un_{t>T_1} \Bigr). 
\eea
Let $e_1=(1,0)$ be the first vector of the canonical basis on $E$ and let us define a function $\Theta$ on $E\times\R_+$ by $\Theta(z,t)=\int_0^t \theta(\phi(z,s))ds $ with 
\ben
\label{theta}
\theta(z)=\theta(n,h)=n(b+d+cn+Bh).
\een 
The function $t\mapsto 1-e^{-\Theta(z,t)}$ is the cumulative distribution function of the first jump time $T_1$ conditionally on $\{Z_0=z\}$. Then
\bea
G_{\psi}(z,t)&=e^{-\Theta(z,t)}g(\phi(z,t))+\int_0^t \int_E e^{-\Theta(z,t)} \Bigl[  \psi(\phi(z,s)+e_1,t-s) bn+\\
&\quad\quad\quad \psi(\phi(z,s)-e_1,t-s) n(d+cn+B\phi_n(h,s)) \Bigr]ds . 
\eea
Let us remark that $z\mapsto G_{\psi}(z,t)$ is continuous since $z\mapsto \phi(z,t)$ is continuous by Cauchy Lipschitz theorem for all $t\ge0$, and the integrand is locally bounded.\\
We now iterate the kernel $G_{\psi}$. From Lemma (27.3) in \cite{davis1993markov} we get that $\forall k\in \N$, 
$$
{G_{\psi}^{k+1}(z,t)= G^k_{G_\psi}(z,t) }=\E_z\Bigl( g(Z_t)\un_{t\le T_{{k+1}}} +\psi(Z_{T_{{k+1}}},t-T_{{k+1}})\un_{t>T_{{k+1}}} \Bigr).
$$
Then
\bea
\label{cv-P_t}
|G^k_{\psi}(z,t)-\E_z(g(Z_t))| &\le \E_z\Bigl(\Bigl(|g(Z_t)| +|\psi( Z_{T_k},t-T_k) |\Bigr)\un_{t>T_k} \Bigr) \\
&\le (\norm{g}_{\infty} +\norm{\psi}_{\infty}) \Pro_z(T_k<t).
\eea
We deduce from \textit{(i)} with $p=1$, that the sequence of jump times $(T_k)_{k\in \N}$ converges almost surely to $\infty$, hence $\Pro_z(T_k<t)\longrightarrow_{k\to \infty}0$.
To obtain the continuity of $z\mapsto P_tg(z)$ it is sufficient to  prove that the probability $\Pro_z(T_k<t)$ converges to 0 uniformly on compact sets of $E$.\\
Let $K$ be a compact set of $E$, and set $N_{+}=\sup\{n; (n,h)\in K\}$ and $H_{+}=\sup\{h;(n,h)\in K\}$.
We construct a sequence of jump times $(S_k)_{k\in\N}$ that stochastically dominates the sequence of jump times $(T_k)_{k\in\N}$ for any initial condition in $K$.
We start by bounding from above the prey and the predator populations. Similarly as above, we define a prey pure jump process $(X_t)_{t\ge0}$ starting from $N_+$ and a deterministic predator population process $Y_t$ starting from $H_+$:
\bea
&X_t=N_{+}+\int_0^t\int_{\R_+} \un_{u \le b X_{s-}} Q_1(ds,du)\\
&\frac{d}{dt}Y_t=Y_t (r B X_t-D -C Y_t ),\quad Y_0=H_{+}.
\eea
The difference with point $(i)$ lies in the fact that this coupling bounds from above every trajectory with initial condition in $K$: more precisely for any initial condition $z\in K$, $N_t\le X_t$ and $H_t\le Y_t$ for all $t\ge0$, almost surely.
\\
We introduce a Poisson point process with intensity 
$\theta(X_t,Y_t)dt$ and denote by $(S_i)_{i\in \N}$ its sequence of jump times.
Since the rate function $\theta$ increases, we deduce that for all $z\in E$ and $t>0$,
\be
\Pro_z(T_k<t)\le \Pro(S_k<t).
\ee
The probability $\Pro(S_k<t)$ converges toward $0$ as $k\to \infty$, since for all $T>0$, $\E(\sup_{s\in [0,T]}X_s)<\infty$ and $\E(\sup_{s\in [0,T]}Y_s)<\infty$.
\end{proof}

\subsection{Derivation from an individual-based model}
\label{subsec:IBM}
In this part, we justify that the model \eqref{eqdiff}-\eqref{sauts} derives from a microscopic model for the prey-predator community.
We introduce a scaling parameter $K$ tending to $\infty$ and consider that the number of predator is of order $K$ while the prey number remains of order $1$. At each time $t\ge0$, the microscopic community is represented by a vector $(N^K_t,H^K_t)$ where $N^K_t\in \N$ is the prey number  and $H^K_t\in \N$ is the number of predators. This process is a two-types continuous time Markov process whose transition rates are given for all $(n,h)\in\N^2$ by
\be
\label{sauts-micro}
\begin{tabular}{cll}
$(n,h) \to$& $(n+1,h)$& \text{at rate} $nb$\\
& $(n-1,h)$& \text{at rate } $n(d+cn+ B^K h)\un_{n\ge2}$ \\
& $(n,h+1)$& \text{at rate } $hr^KB^K n$\\
& $(n,h-1)$& \text{at rate } $h(D^K+C^K h)$.\\
\end{tabular}
\ee
The parameters $B^K$, $r^K$, $D^K$ and $C^K$ are chosen as follows:
\be
B^K=\frac{B}{K},\quad r^K=Kr, \quad D^K=D ,\quad C^K=\frac{C}{K}\cdot
\ee
The predation and the competition among predators are normalized following \cite{FM04,CFM08}. The parameter of conversion efficiency $r^K$ is scaled in order to maintain constant the benefit from predation.\\
We consider the limit as $K\to\infty$ of the rescaled process $(N^K,\frac{H^K}{K})$. 
\begin{theorem}
Assuming that the sequence of initial conditions \linebreak[4]$(N^K_0,\frac{H^K_0}{K})_{K\ge0}$ satisfies 
\ben
\label{hyp:moment-unif}
\sup_{K}\E\Bigl(\Bigl(N^K_0+\frac{H^K_0}{K}\Bigr)^3\Bigr)<\infty .
\een
and converges in law toward $(n_0,h_0)\in \E$, then for all $T>0$ the process $(N^K,\frac{H^K}{K})_{K\ge0}$ converges in law in $\mathbb{D}([0,T], E)$ toward the piecewise deterministic process $(N_t,H_t)$ defined by \eqref{eqdiff}-\eqref{sauts} with initial condition $(n_0,h_0)$.
\end{theorem}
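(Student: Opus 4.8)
The plan is to prove this convergence by the classical martingale-problem method of Fournier and M\'el\'eard \cite{FM04}: obtain uniform-in-$K$ moment bounds, establish tightness of the laws of $(N^K,\widebar{H}^K)$ in $\D([0,T],E)$ with $\widebar{H}^K:=H^K/K$, identify every limit point as a solution of the martingale problem for the generator $\Acal$ of \eqref{gene}, and finally invoke the well-posedness of that martingale problem (Theorem~\ref{thm:1}) to upgrade tightness to convergence of the whole sequence. First I would compute the generator $\mathcal{A}^K$ of the rescaled chain in the variable $\widebar{h}=h/K$. Rewriting the four transitions of \eqref{sauts-micro}, the factors of $K$ cancel in the prey terms (since $B^K h=B\widebar{h}$), so these reproduce exactly the jump part of $\Acal$; the two predator transitions become jumps of size $1/K$ occurring at the large rates $rBn\widebar{h}\,K$ and $\widebar{h}(D+C\widebar{h})\,K$. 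For a test function $f$ bounded with $\partial_2 f,\partial_2^2 f$ bounded, a second-order Taylor expansion in the second variable yields
\[
\bigl|\mathcal{A}^K f(n,\widebar{h})-\Acal f(n,\widebar{h})\bigr|\le \frac{\norm{\partial_2^2 f}_{\infty}}{2K}\,\widebar{h}\,(rBn+D+C\widebar{h}),
\]
so $\mathcal{A}^K f\to\Acal f$ with a remainder of order $1/K$ times a polynomial in $(n,\widebar{h})$, to be controlled in expectation by the moment bounds below. This is precisely Kurtz's fluid limit for the density-dependent predator chain, here modulated by the slow prey number.

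Second, I would establish, for some $p\ge2$ and using \eqref{hyp:moment-unif},
\[
\sup_{K}\E\Bigl(\sup_{t\in[0,T]}\bigl(N^K_t+\widebar{H}^K_t\bigr)^p\Bigr)<\infty,
\]
following the domination arguments of Theorem~\ref{thm:1}(i): $N^K$ is stochastically dominated, uniformly in $K$, by a pure birth process of rate $bn$; and since the rescaled predator density has birth rate $rBn\widebar{h}$ and death rate $\widebar{h}(D+C\widebar{h})$, which contract $\widebar{h}$ toward $h^*_n\le rBn/C$, one obtains a bound of the form $\widebar{H}^K_t\le \widebar{H}^K_0\vee\frac{rB}{C}\sup_{[0,t]}N^K_s$ up to a vanishing fluctuation, rigorously via Dynkin's formula applied to $f(n,\widebar{h})=\widebar{h}^p$ together with Gronwall's lemma. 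The third moment in \eqref{hyp:moment-unif} supplies the slack needed to close these estimates and to bound the generator remainder in expectation.

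Third, with these bounds I would prove tightness in $\D([0,T],E)$ by the Aldous--Rebolledo criterion applied to the Dynkin decomposition of $f(N^K,\widebar{H}^K)$ for coordinate-type test functions. The jump part of the predator component is a martingale whose predictable quadratic variation is $O(1/K)$ (jumps of size $1/K$ at rate $O(K)$), hence vanishes; this forces the $H$-coordinate of every limit point to have continuous sample paths, while the prey coordinate keeps its unit jumps. Passing to the limit in the martingale identity associated with
\[
f(N^K_t,\widebar{H}^K_t)-f(N^K_0,\widebar{H}^K_0)-\int_0^t \mathcal{A}^K f(N^K_s,\widebar{H}^K_s)\,ds,
\]
and using $\mathcal{A}^K f\to\Acal f$ and uniform integrability from the moment bounds (which also kill the remainder), I obtain that any limit point $(N,H)$ solves the martingale problem for $\Acal$ with initial law $\delta_{(n_0,h_0)}$.

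Finally, Theorem~\ref{thm:1}(ii) guarantees that this martingale problem is well-posed, so every limit point has the law of the piecewise deterministic process $Z$ started at $(n_0,h_0)$, and uniqueness of the limit yields convergence of the full sequence. I expect the main obstacle to be the interplay between the unbounded rates and the fast predator dynamics when passing to the limit: the prey death rate grows quadratically in $n$ while the predator rates are of order $K$, so controlling the Dynkin remainder and verifying the Aldous--Rebolledo estimates genuinely require the uniform moments of the second step. Extra care is also needed in the identification because the evaluation and integral functionals are not continuous for the Skorokhod topology on the jumping prey coordinate; one argues along a countable separating class of test functions $f$ and exploits that the jumps of $N^K$ have unit size and occur at a locally bounded rate, so that fixed times are almost surely continuity points of the limit.
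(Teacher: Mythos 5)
Your proposal is correct and follows exactly the compactness--uniqueness strategy that the paper itself invokes: the paper does not write out this proof but delegates it to Theorem 3.1 of Crudu et al.\ \cite{crudu2012convergence}, whose argument is precisely the generator expansion, uniform moment bounds, Aldous--Rebolledo tightness, martingale-problem identification and uniqueness via Theorem~\ref{thm:1} that you describe. Your write-up is a sound fleshing-out of that same route, including the two genuine technical points (controlling the unbounded quadratic rates with the third-moment hypothesis, and handling the discontinuity of evaluation maps in the Skorokhod topology).
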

The proof of this theorem is based on a compactness-uniqueness argument which derives from Theorem 3.1 in \cite{crudu2012convergence} and will not be developed here. The moment assumptions \eqref{hyp:moment-unif} ensure that the processes $Z^K$ and $Z$ are well defined and that the assumptions of Theorem 3.1 in \cite{crudu2012convergence} are satisfied. In the latter, the authors prove a similar result for a gene regulatory network in which the chemical reactions occur at slow or fast speed. 
\section{Ergodic properties}
\label{sec:ergodicity}
In this section, we study the ergodic properties of the prey-predator community process $Z$.
We will prove the irreducibility of the process and of specific sampled chains. From these properties and a Foster-Lyapunov criterion, we will show that there exists a unique invariant probability measure and that the process is exponentially ergodic.

\subsection{Some definitions and known results}
\label{subsec:rappel}
Let us first recall some definitions. Let $(X_t)_{t\ge0}$ be a Feller process taking values in $E$ a locally compact and separable metric space. We denote by $\mathcal{L}$ its infinitesimal generator and by $P_t$ its semi-group.
For every $A\in\mathcal{B}(E)$ we set $ \tau_A=\inf\{t\ge0, X_t\in A\}$.\smallskip\\
The process $X_t$  is \textit{irreducible} if there exists a $\sigma-$finite measure $\nu$ on $E$, 
called irreducibility measure, such that for all $A\in\mathcal{B}(E)$
\be
\label{def-irr}
\nu(A)>0 \quad \Longrightarrow\quad  \forall x\in E,\quad \E_x\Bigl(\int_0^{\infty} \un_A(X_t) dt\Bigr) >0.
\ee
The process $X_t$ is \textit{Harris recurrent} if there exists a $\sigma-$finite measure $\mu$ on $E$ such that $\forall A\in\mathcal{B}(E)$ 
$$
\mu(A)>0 \Longrightarrow \quad \forall x\in E,\quad\Pro_x(\tau_A<\infty)=1.
$$ 
A Harris recurrent Markov process is always irreducible (see \cite{meyntweedie2}).\\
Moreover, a Harris recurrent process $X_t$ has an invariant measure $\pi$ (see \cite{azema1967mesure}). In the case where this measure is finite, we say that $X_t$ is \textit{positive Harris recurrent}.\smallskip\\
For continuous time processes, the positive Harris recurrence can be derived from a Foster-Lyapunov inequality satisfied by the infinitesimal generator on some \textit{petite} set. Recall that a set $C\subset E$ is \textit{petite} if there exist a probability measure $\alpha$ on $\R_+$, and a non degenerate measure $\nu_{\alpha}$ on $E$ such that for any $z\in C$
\be
\label{petiteset}
\int_0^{\infty} P_t(z,.)\alpha(dt)\ge \nu_{\alpha}(.).
\ee
For an irreducible Feller process whose irreducibility measure has a support with non empty interior, all compact sets of $E$ are \textit{petite} sets (from Theorem 5.1 and 7.1 in \cite{tweedie1994topological}).\\
We recall sufficient conditions for the positive Harris recurrence of a Feller process.
\begin{thmx}(Theorem 4.2 in \cite{meyntweedie3})
\label{thm:histo1}
Let $(X_t)_{t\ge0}$ be a Feller process taking values in $E$.
If the following conditions are satisfied:
\begin{enumerate}[i)]
\item $X$ is irreducible with respect to some measure whose support has non empty interior.
\item Foster-Lyapunov inequality:  there exist a function $V:E\to[1,\infty[$ such that $\lim_{|z|\to\infty} V(z)=\infty$, a compact set $K\subset E$ and two constants $\delta,\gamma>0$ such that
\be
\mathcal{L}V(z)\le -\gamma V(z)+\delta\un_K(z),\quad \quad \forall z\in E.
\ee
\end{enumerate}
Then $X$ is positive Harris recurrent and there exists a unique invariant probability measure $\pi$. Moreover $\pi(V)<\infty$.
\end{thmx}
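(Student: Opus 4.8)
The plan is to follow the classical Meyn--Tweedie strategy, which splits into an existence/positivity part derived purely from the drift inequality and a recurrence/uniqueness part that couples the drift with the irreducibility hypothesis.

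First I would exploit the drift inequality $\mathcal{L}V\le -\gamma V+\delta\un_K$ to control the growth of $V$ along trajectories. Applying Dynkin's formula to $V$ — localised by the exit times $\tau_m=\inf\{t\ge0:V(X_t)\ge m\}$ to circumvent the fact that $V$ is unbounded and need not lie in a convenient core — and taking expectations, one obtains $\frac{d}{dt}\E_x\big(V(X_t)\big)\le -\gamma\,\E_x\big(V(X_t)\big)+\delta$. Gronwall's lemma then yields the uniform bound $\E_x\big(V(X_t)\big)\le e^{-\gamma t}V(x)+\delta/\gamma$. The care required here is to justify that the local martingale appearing in Dynkin's formula is a genuine martingale after localisation, and that one may pass to the limit $m\to\infty$ (monotone convergence on the $V$ side, the drift bound controlling the $\mathcal{L}V$ side).

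Next I would deduce the existence of an invariant probability measure. Since $V$ is coercive ($\lim_{|z|\to\infty}V(z)=\infty$), its sublevel sets are relatively compact, so the uniform moment bound above makes the family of Cesàro averages $\mu_t(\cdot)=\tfrac1t\int_0^t P_s(x,\cdot)\,ds$ tight on $E$. The Feller property established in Theorem \ref{thm:1} then allows a Krylov--Bogolyubov argument: any weak limit point $\pi$ of $(\mu_t)_{t>0}$ is invariant for $P_t$. To obtain $\pi(V)<\infty$ I would integrate the drift inequality against $\pi$; using invariance (so that $\int \mathcal{L}(V\wedge m)\,d\pi=0$ for the truncations, then letting $m\to\infty$ by Fatou) gives $\gamma\,\pi(V)\le \delta\,\pi(K)\le\delta$, hence $\pi(V)\le\delta/\gamma$.

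The main obstacle, and the point where hypothesis $(i)$ enters, is upgrading this to \emph{Harris} recurrence and deriving uniqueness. The drift inequality also forces returns to $K$: since $V\ge1$, outside $K$ one has $\mathcal{L}V\le-\gamma$, and a standard drift argument (Dynkin applied up to $\tau_K$) yields $\E_x(\tau_K)\le V(x)/\gamma<\infty$ for every $x$. By hypothesis $(i)$ the irreducibility measure has support with non-empty interior, so by the Tweedie result recalled above every compact set — in particular $K$ — is a \emph{petite} set. The delicate step is then to invoke the continuous-time Harris theory: one transfers the drift condition and the petiteness of $K$ to a resolvent (or skeleton) chain $R_\beta$, applies the discrete-time return-time criterion to conclude that this chain, and hence the continuous-time process, is Harris recurrent, and finally combines Harris recurrence with the finite invariant mass to obtain \emph{positive} Harris recurrence. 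Uniqueness of $\pi$ is then automatic, as a positive Harris recurrent process admits exactly one invariant probability measure. I expect this last passage — relating the continuous-time process to a sampled chain and checking that petiteness and the drift are preserved under sampling — to be the genuinely technical heart of the proof.
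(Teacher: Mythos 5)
This statement is a recalled background result --- it is Theorem~4.2 of Meyn and Tweedie's \emph{Stability of Markovian processes III} --- and the paper offers no proof of it, only the citation; so there is no internal argument to compare yours against. Judged on its own terms, your sketch is a faithful reconstruction of the classical route and I see no structural error: the drift inequality gives both the moment bound $\E_x(V(X_t))\le e^{-\gamma t}V(x)+\delta/\gamma$ and the finite mean hitting time $\E_x(\tau_K)\le V(x)/\gamma$; the hypothesis that the irreducibility measure has support with non-empty interior makes the compact set $K$ petite (Tweedie, 1994); and petiteness plus finite expected return times yields Harris recurrence, hence uniqueness once an invariant probability exists. Your Krylov--Bogolyubov existence step is a legitimate alternative to Meyn--Tweedie's own order of argument (they obtain positivity directly from the finiteness of $\E_x(\tau_K)$ and the cycle representation of the invariant measure), and it is available here because the Feller property is assumed. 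The one step I would not let stand as written is $\int\mathcal{L}(V\wedge m)\,d\pi=0$: the truncations $V\wedge m$ need not belong to the domain of the (extended) generator, so this identity is not automatic. A clean repair that avoids the generator altogether is to use invariance together with $P_t(V\wedge m)\le(P_tV)\wedge m\le(e^{-\gamma t}V+\delta/\gamma)\wedge m$, let $t\to\infty$ under the bound $m$ to get $\pi(V\wedge m)\le\delta/\gamma$ for every $m$, and conclude $\pi(V)\le\delta/\gamma$ by monotone convergence. With that adjustment, and the localisation care you already flag for Dynkin's formula, the proposal is sound.
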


\noindent The process $X_t$ is \textit{ergodic} if it has a unique invariant probability measure $\pi$ and if
\be
\label{ergodic}
\lim_{t\to\infty} \norm{P_t(x,.)-\pi}_{TV} =0,\quad \forall x\in E.
\ee
Moreover, $X_t$ is \textit{exponentially ergodic} if there exist a function 
$R:E\to(0,\infty)$ and $0<\rho<1$ such that
\be
\norm{P_t(x,.)-\pi}_{TV} \le R(x)\rho^t ,\quad \forall x\in E.
\ee
In the case of continuous time Markov processes on continuous state spaces, the ergodicity is related to the behavior of skeletons of the process. A skeleton corresponds to a sampling of the continuous time process at some fixed time. For all $\Delta>0$, the $\Delta-$skeleton of $X$ is the Markov chain $(X_{k\Delta})_{k\in\N}$ with transition kernel $P_{\Delta}$.
We recall sufficient conditions for exponential ergodicity of a Feller process. 
\begin{thmx} (Theorem 6.1 in \cite{meyntweedie2} and Theorem 6.1 in \cite{meyntweedie3} )
\label{thm:histo2}
Let $(X_t)_{t\ge0}$ be a Feller process taking values in $E$ which satisfies both conditions i) and ii) in Theorem \ref{thm:histo1}. If furthermore there exists an irreducible skeleton $(X_{k\Delta})_{k\in\N}$ ($\Delta>0$), then $X$ is exponentially ergodic and there exist $0<\rho<1$ and $0<R<\infty$ such that, for all $ z\in E$,
\be
\norm{P_t(z,.)-\pi}_{TV} \le R\rho^t V(z).
\ee
\end{thmx}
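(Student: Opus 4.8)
The plan is to transfer the problem to the $\Delta$-skeleton chain $(X_{k\Delta})_{k\in\N}$, where the classical discrete-time geometric ergodicity theory applies, and then to interpolate the resulting bound back to arbitrary times $t$. Write $\lambda=e^{-\gamma\Delta}\in(0,1)$ and let $C_M=\{z\in E:V(z)\le M\}$ be the sublevel sets of $V$. Since $\lim_{|z|\to\infty}V(z)=\infty$ means that $V$ exceeds $M$ off some compact set, each $C_M$ is precompact; hence by hypothesis i), the Feller property, and the topological recurrence results recalled above, the closure of each $C_M$ is a petite set.

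First I would convert the infinitesimal drift inequality into a geometric drift for the one-step kernel $P_\Delta$. Applying $P_t$ to the Foster-Lyapunov inequality (justified for $V$ in the domain of the generator) and using $\un_K\le 1$ gives $\frac{d}{dt}P_tV\le -\gamma P_tV+\delta$, so Gronwall's lemma yields, for every $z\in E$,
\[
P_\Delta V(z)\le e^{-\gamma\Delta}V(z)+\frac{\delta}{\gamma}\bigl(1-e^{-\gamma\Delta}\bigr)\le \lambda V(z)+\frac{\delta}{\gamma}.
\]
Fixing any $\lambda'\in(\lambda,1)$ and choosing $M$ large enough that $\delta/\gamma\le(\lambda'-\lambda)V$ on $C_M^c$, I obtain the discrete-time geometric drift $P_\Delta V\le \lambda' V+b\,\un_{C_M}$ with $b=\lambda M+\delta/\gamma$. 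Thus the skeleton satisfies a geometric Foster-Lyapunov condition toward the petite set $C_M$.

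The core step is the discrete-time analysis. The skeleton is irreducible by hypothesis and satisfies the geometric drift toward the petite set $C_M$, which places it in the scope of the discrete-time geometric ergodicity theorem of Meyn and Tweedie (see \cite{meyntweedie2}). Modulo the treatment of aperiodicity --- which is where the real work lies, since irreducibility of a skeleton does not by itself exclude a nontrivial period, and cyclic behaviour must be ruled out before geometric convergence can hold --- this produces constants $R'<\infty$ and $\rho'\in(0,1)$ with $\norm{P_{k\Delta}(z,\cdot)-\pi}_{TV}\le R'\rho'^{\,k}V(z)$ for all $z\in E$ and $k\in\N$; here $\pi$ is the unique invariant probability measure supplied by Theorem \ref{thm:histo1}, whose property $\pi(V)<\infty$ also enters. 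I expect this step --- the discrete geometric ergodic theorem together with the verification that the irreducible skeleton is aperiodic, and the matching of the petiteness notions for the process and for the skeleton --- to be the main obstacle.

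Finally I would interpolate to continuous time using that total variation distance is non-expansive under any Markov kernel. For $t\ge0$ write $t=k\Delta+s$ with $k=\lfloor t/\Delta\rfloor$ and $s\in[0,\Delta)$. Invariance gives $\pi P_s=\pi$, so the semigroup property yields $P_t(z,\cdot)-\pi=\bigl(P_{k\Delta}(z,\cdot)-\pi\bigr)P_s$, whence
\[
\norm{P_t(z,\cdot)-\pi}_{TV}\le \norm{P_{k\Delta}(z,\cdot)-\pi}_{TV}\le R'\rho'^{\,k}V(z).
\]
Setting $\rho=(\rho')^{1/\Delta}\in(0,1)$ and $R=R'/\rho'$ and using $k\ge t/\Delta-1$ to bound $\rho'^{\,k}\le \rho'^{-1}\rho^{t}$, I arrive at $\norm{P_t(z,\cdot)-\pi}_{TV}\le R\rho^tV(z)$, the claimed exponential ergodicity with constants uniform in $z$.
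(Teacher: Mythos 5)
Your proposal follows essentially the same route as the paper: transfer the Foster--Lyapunov inequality to the $\Delta$-skeleton via Gronwall, invoke the discrete-time geometric ergodicity theorem of Meyn and Tweedie for the irreducible skeleton, and interpolate back to continuous time using the semigroup property and the non-expansiveness of total variation under $P_s$. The aperiodicity question you flag as the main obstacle is precisely the point the paper settles by citation: since the process is positive Harris recurrent, its invariant probability measure is invariant for the irreducible skeleton, which by Theorem 5.1 of \cite{meyntweedie2} is therefore positive recurrent and aperiodic.
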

Let us briefly explain the origin of the condition on the skeleton of the process. Since $X_t$ is positive Harris recurrent, the irreducible skeleton $(X_{k\Delta})_{k\in\N}$ has an invariant probability measure. Hence, the skeleton chain is positive recurrent and aperiodic (see Theorem 5.1 in \cite{meyntweedie2}). The irreducibility is crucial to obtain the aperiodicity.\\
Moreover, from the Foster-Lyapunov inequality ii) in Theorem \ref{thm:histo1}, we deduce that the skeleton chain also satisfies a Foster-Lyapunov inequality with the same function $V$: there exist $\gamma'<1$ and $\delta'>0$ such that 
for every initial condition $z\in E$
\be
\E_z\bigl( V(X_{\Delta})\bigr)  \le \gamma'V(z)  +  \delta'.
\ee
From Theorem 6.3 in \cite{meyntweedie1}, we deduce that the skeleton $(X_{k\Delta})_{k\in\N}$ is geometrically ergodic. There exist $0<\rho<1$ and $0<R<\infty$ such that for all $z\in E$ 
\be
\norm{P_{k\Delta}(z,.)-\pi}_{TV} \le R\rho^kV(z).
\ee
The exponential ergodicity of the continuous time process then derives from the semi-group property.

\subsection{Irreducibly}
In this section we study the irreducibility of $(Z_t)_{t\ge0}$ in $E=\N^*\times\R_+$. 
Let us highlight that a Borel set $A\in\mathcal{B}(E)$ can always be written as 
$$
A=\bigcup_{k\ge1} \{k\}\times A_k,
$$ 
where $A_k\in\mathcal{B}(\R_+)$.
We introduce the measure $\sigma$ on $E$ as the product of the counting measure on $\N^*$ and the Lebesgue measure $\lambda$ on $\R_+$: 
\ben
\label{sigma}
\forall A\in\mathcal{B}(E),\quad \sigma(A)= \sum_{k\ge1} \lambda(A_k).
\een
In particular, if $\sigma(A)>0$, then there exist $k\in\N^*$ such that $\lambda(A_k)>0$.
\medskip

\begin{theorem}
\label{thm:phiirr}
\begin{enumerate}[(i)]
\item If $rB -D \le 0$, then the process $(Z_t,t\ge0)$ is irreducible for the measure $\sigma$ on $E$ given by \eqref{sigma}.
\item Otherwise, the process $(Z_t,t\ge0)$ is irreducible for the measure $\sigma'$ which is the restriction of $\sigma$ to the space $E'=\N^*\times ( h^*_1,+\infty)$, for $h^*_1$ defined in \eqref{equilibre}.
\end{enumerate}
\end{theorem}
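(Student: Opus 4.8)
The plan is to prove irreducibility directly from the definition \eqref{def-irr}. Fixing a Borel set $A=\bigcup_k\{k\}\times A_k$ of positive measure, some fibre has $\lambda(A_k)>0$, and it suffices to exhibit, for every starting point $z=(n,h)\in E$ (with $h>0$, the only substantive case, positivity being preserved by the flow), a set of trajectories of positive $\Pro_z$-probability along which the process spends a positive Lebesgue amount of time in $\{k\}\times A_k$; then $\E_z(\int_0^\infty\un_A(Z_t)\,dt)>0$. The whole difficulty is that between jumps $H$ is slaved to the deterministic flow $\phi_n$, so all control of the continuous coordinate must come from the choice of the (random, positively distributed) jump times and jump types of $N$. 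I will use repeatedly that $t\mapsto\phi_n(h,t)$ is a strictly monotone $C^1$ map converging to $h^*_n$, that $h^*_n$ is nondecreasing in $n$ with $h^*_n\to\infty$, and that $h^*_1$ is the smallest equilibrium.

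First I would reduce the target. Since $\lambda(A_k)>0$ and a single point carries no mass, $A_k$ has a Lebesgue density point $p\neq h^*_k$; in case (ii) moreover $A_k\subset(h^*_1,\infty)$, so $p>h^*_1$. Choose a short interval $[\alpha,\beta]\ni p$ with $\lambda([\alpha,\beta]\cap A_k)>0$ lying strictly on one side of $h^*_k$, and, in case (ii), with $\alpha>h^*_1$. The goal becomes to reach a state $(k,h_0)$ from which $\phi_k(h_0,\cdot)$ sweeps monotonically across $[\alpha,\beta]$ without jumping: if $p<h^*_k$ I aim for $h_0<\alpha$ (so $\phi_k(h_0,\cdot)$ increases toward $h^*_k>\beta$ and crosses $[\alpha,\beta]$); if $p>h^*_k$ I aim for $h_0>\beta$ (so $\phi_k$ decreases toward $h^*_k<\alpha$). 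On the no-jump spell realizing this sweep, the change of variables $y=\phi_k(h_0,s)$, whose derivative never vanishes, gives $\int\un_{A_k}(\phi_k(h_0,s))\,ds=\int_{[\alpha,\beta]\cap A_k}|\dot\phi_k|^{-1}\,dy>0$, i.e. strictly positive time in $\{k\}\times A_k$.

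It remains to steer from $z$ to such a state along a positive-probability itinerary. To drive $H$ small I would bring $N$ down to $1$ by successive death jumps — legitimate since the death rate $n(d+cn+Bh)$ is positive for $n\ge2$ — and then let the flow at $N=1$ push $H$ toward $h^*_1$ (case (ii)) or toward $0$ (case (i)); since $\alpha>h^*_1$ (resp. $\alpha>0$), a sufficiently long no-jump spell makes $H<\alpha$. To drive $H$ large I would instead push $N$ up by births (rate $bn>0$) to a level $m$ with $h^*_m>\beta$, which exists since $h^*_m\to\infty$, and wait until the flow carries $H$ above $\beta$. In either case I then bring $N$ to $k$ by a short burst of births/deaths of total duration so small that, by continuity of the flow, $H$ stays on the required side of $\alpha$ or $\beta$, landing the process at the desired $(k,h_0)$. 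Finally I must check that such a finite itinerary — prescribed no-jump spells interleaved with jumps of prescribed type, followed by the sweeping spell — has positive probability: here the total jump rate $\theta(n,h)=n(b+d+cn+Bh)$ from \eqref{theta} is continuous, bounded below by $b>0$ and bounded on compacts, so each holding time has a positive density, each jump is a birth with positive probability $bn/\theta$ (resp. a death with probability $n(d+cn+Bh)/\theta$), and an open set of holding-time vectors realizes the target; multiplying these positive contributions gives the claim.

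The main obstacle is precisely this coupled controllability: $H$ cannot be moved independently of $N$, and while $N$ transits through intermediate values the flow drags $H$ toward the moving equilibrium $h^*_N$. The two ingredients that make the argument work are that the extreme equilibria bracket every admissible level — $h^*_1$ (or $0$) from below and $h^*_m\to\infty$ from above — and that intermediate $N$-values can be crossed arbitrarily fast, so $H$ is displaced negligibly during the transit. The same mechanism explains the dichotomy in the statement: when $rB-D>0$ every equilibrium satisfies $h^*_n\ge h^*_1>0$, so once $H\ge h^*_1$ the flow can never bring it back below $h^*_1$; the part $(0,h^*_1]$ of each fibre is therefore not charged by the reachable set, forcing the irreducibility measure down to its restriction $\sigma'$ on $E'$, whereas for $rB-D\le0$ one has $h^*_1=0$, $H$ can be made arbitrarily small, and all of $E$ is reached.
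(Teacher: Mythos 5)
Your proof is correct, and it reaches the same destination by the same underlying mechanism as the paper --- explicit positive-probability itineraries of prescribed jump types and holding times, followed by a no-jump spell during which the monotone flow $\phi_k$ sweeps the target fibre, with the occupation time converted into a spatial integral by the change of variables $y=\phi_k(h_0,s)$ (whose Jacobian $F_k(y)=y(rBk-D-Cy)$ is bounded away from zero off $h^*_k$). The organization, however, is genuinely different. The paper first proves the stronger fixed-time reachability statement of Theorem \ref{thm:irr+} ($\Pro_{(n,h)}(Z_t\in I)>0$ for all $t\ge t_0$) through a lengthy case analysis --- intervals stable for $\phi_k$, intervals above or below the equilibrium curve, auxiliary intervals $J$, $I'$, $I^1,I^2,I^3$ built via flow comparisons $F_{n+1}>F_n$ and the inverse flow $\psi_{k,h}$ --- and only then derives irreducibility, again splitting into the cases where $A_k$ does or does not contain an interval. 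You collapse both layers: the Lebesgue density-point reduction handles arbitrary Borel fibres (including the fat-Cantor-set case) in one stroke, and the ``fast transit'' observation --- that intermediate prey levels can be crossed in total time $\delta$ so small that $H$ is displaced negligibly --- replaces most of the auxiliary-interval geometry, since all the real work is done by parking the process at $N=1$ (to drive $H$ down toward $h^*_1$ or $0$) or at a large $m$ with $h^*_m>\beta$ (to drive $H$ up). What you lose is precisely the surplus of Theorem \ref{thm:irr+}: your argument controls only $\E_z\bigl(\int_0^\infty \un_A(Z_t)\,dt\bigr)$, not the law of $Z_t$ at a prescribed time $t$, and the paper reuses the fixed-time statement later to prove irreducibility of the skeleton chains $(Z_{k\Delta})_k$ in the proof of exponential ergodicity; your route would have to be strengthened (e.g.\ by padding the itinerary with a terminal waiting spell inside a flow-stable interval, as the paper does) before it could serve that purpose. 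Two small points worth making explicit if you write this up: when conditioning on a birth or death cascade you should restrict to holding-time vectors in a fixed bounded open set so that $H$ admits deterministic upper and lower bounds (e.g.\ $H_t\ge H_0e^{-\kappa t}$ with $\kappa=D+C(H_0\vee h^*_m)$) and the subsequent deterministic waiting times can be chosen uniformly; and the exclusion of $h=0$, which you flag in passing, is genuinely needed since the fibre $\{H=0\}$ is invariant --- the paper makes the same implicit restriction.
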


The dichotomy in this result derives from the fact that when $h^*_1>0$, the set $\N^*\times (0,h^*_1)$ is transient for the dynamics. Indeed, the flows $t\mapsto\phi_n(h,t)$ are increasing functions for $(n,h)\in\N^*\times (0,h^*_1)$ and therefore the trajectories cannot enter this area. 

In the sequel we prove a stronger result on the probability for the process $Z_t$ to reach open Borel sets, from which Theorem \ref{thm:phiirr} follows.
\begin{theorem}
\label{thm:irr+}\begin{enumerate}[(i)]
\item In the case where $rB -D \le 0$, we consider an interval $I=\{k\}\times(h_-,h_+)$ with $0\le h_-<h_+$ and $k\in\N^*$. Then for every initial condition $(n,h)\in E$, there exists $t_0>0$ such that $\forall t\ge t_0$,
\be
\Pro_{(n,h)}\bigl(Z_t\in I)>0.
\ee
\item
We have a similar result in the case where $rB -D >0$ for any interval $I\subset E'$ such that $\sigma'(I)>0$ and any initial condition $(n,h)\in E'$.
\end{enumerate}
\end{theorem}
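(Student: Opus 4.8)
The plan is to establish the stronger pointwise statement by constructing, for a target interval $I=\{k\}\times(h_-,h_+)$ and a starting point $(n,h)$, an explicit admissible trajectory — a finite prescribed sequence of prey jumps separated by jump-free flow segments — that places $Z$ inside $I$ at the prescribed time $t$, and then checking that such a trajectory carries positive probability. Every ingredient is positive: a jump-free segment of length $\tau$ at prey count $m$ has probability equal to the exponential of minus the total jump rate integrated along the flow $\phi_m$, which is finite (hence positive probability) because the flow stays bounded by \eqref{majoration-phi_n}; and a single prey jump in a prescribed direction occurring in a prescribed small open time window has positive probability, since the birth rate $bm$ and, for $m\ge2$, the death rate $m(d+cm+B\phi_m(h,\cdot))$ are both strictly positive (recall $c>0$). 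Concatenating finitely many such pieces and using that the flows $\phi_m$ of \eqref{phi_n} are continuous, the resulting event has positive probability; moreover, since the target $(h_-,h_+)$ is open, the event $\{Z_t\in I\}$ is robust under small perturbations of the jump times, so it is genuinely charged.

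The geometric engine is the monotone relaxation of $H$ toward the equilibria $h^*_m$ of \eqref{equilibre}. Since $rB>0$, the family $(h^*_m)_{m\ge1}$ is increasing with $h^*_m\to\infty$, while $h^*_1=0$ when $rB-D\le0$ and $h^*_1>0$ otherwise. Consequently, from any admissible start one can reach, with positive probability, a state $(m,h_0)$ whose density $h_0$ is \emph{either} as large as desired — by first raising the prey count with birth jumps to a large $m$ (so $h^*_m>h_+$) and then idling, so that $H$ climbs toward $h^*_m$ — \emph{or} as close to $h^*_1$ from above as desired, by lowering the prey count to $1$ and idling, after which $H$ decreases toward $h^*_1$. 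Because $I\subset E'$ in case (ii) and $I\subset E$ in case (i), we may assume $h_->h^*_1$ (shrinking the window slightly if needed, which only strengthens the conclusion), so both extreme entry densities we shall use are available.

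To land inside $I$ I compare $(h_-,h_+)$ with $h^*_k$ and let the flow $\phi_k$ do the crossing. If $h^*_k\in(h_-,h_+)$, it suffices to steer the prey count to $k$ at some moment and then forbid all further jumps: since $\phi_k(h_0,u)\to h^*_k$, one has $\phi_k(h_0,u)\in(h_-,h_+)$ for every $u$ beyond some $T$, and the no-jump event keeps $N\equiv k$, so $\Pro_{(n,h)}(Z_t\in I)>0$ for \emph{all} $t$ large. If instead the window lies strictly below $h^*_k$, I enter prey count $k$ at a density near $h^*_1<h_-$ (prepared by idling at count $1$) and let $\phi_k$ increase strictly monotonically through the window; symmetrically, if the window lies strictly above $h^*_k$, I enter count $k$ at a density above $h_+$ (prepared by idling at a large $m$) and let $\phi_k$ decrease through it. In both crossing cases, monotonicity and \eqref{phi_n} show that the sojourn of $\phi_k(h_0,\cdot)$ in the open window is a nonempty open time interval $(a(h_0),b(h_0))$.

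The main obstacle — and the reason the statement asks for all $t\ge t_0$ rather than a single $t$ — is to cover every large time in these crossing cases, where the process cannot be held inside $I$. Here I would precede the final crossing by an idling phase of adjustable length $\sigma$ at the count preparing the entry density (count $1$ for an ascending crossing, a large $m$ for a descending one). As $\sigma\to\infty$ the entry density $h_0(\sigma)$ tends to $h^*_1$ (resp. $h^*_m$), so the crossing interval $(a(h_0(\sigma)),b(h_0(\sigma)))$ converges to a fixed nonempty interval; consequently the set of total times $t$ for which $Z_t\in I$ is achievable is, for each $\sigma$, an open interval whose length is bounded below by some $\delta_0>0$ and whose left endpoint depends continuously on $\sigma$ and grows like $\sigma$. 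Letting $\sigma$ range continuously over $[\sigma^*,\infty)$, consecutive such intervals overlap and their union is $[t_0,\infty)$ for a suitable $t_0$. Combined with the positivity of the concatenated events from the first paragraph, this continuity-and-overlap argument yields $\Pro_{(n,h)}(Z_t\in I)>0$ for every $t\ge t_0$ in both cases (i) and (ii); integrating in $t$ then gives Theorem \ref{thm:phiirr}.
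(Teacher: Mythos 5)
Your proposal is correct and shares the paper's basic engine: explicit admissible trajectories built from jump-free flow segments and prescribed jumps in small open time windows, positivity of each piece, and the monotone relaxation of the flows toward the equilibria $h^*_m$; the case where the window straddles $h^*_k$ is treated exactly as in the paper, by steering the prey count to $k$ and then forbidding further jumps. Where you genuinely diverge is in the crossing cases ($h^*_k\notin[h_-,h_+]$), i.e.\ the problem of charging \emph{every} large time $t$ when the process cannot be held inside $I$. The paper solves this by first reaching an auxiliary \emph{flow-stable} interval $J$ at the prey count $m=\max\{l:h^*_l<h_+\}$ --- so that $\Pro_{(n,h)}(Z_{t-r_1}\in J)>0$ for all $t-r_1\ge t_0$ by the stable case --- and then appending a final maneuver of \emph{fixed} duration $r_1$; building $J$ requires the inverse flow $\psi_{k,h}$ (case 2B) and a separate three-interval construction when $h^*_m=0$ (case 2C). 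You instead make the idling duration $\sigma$ a continuously tunable parameter and cover $[t_0,\infty)$ by the overlapping open intervals of achievable arrival times; since by the flow property the sojourn length of $\phi_k$ in $(h_-,h_+)$ is the constant $r^k(h_-,h_+)$ and the left endpoint of each interval is continuous in $\sigma$ and tends to infinity, the intermediate value theorem closes the covering. This buys you a uniform treatment that sidesteps the paper's delicate sub-case $h^*_m=0$ entirely; the price is that you must track how the randomness of the jump times perturbs the ideal entry density at count $k$ (an order-$\eta$ bookkeeping you only sketch), whereas the paper's fixed-duration final leg confines that bookkeeping to one short time window. Both arguments are sound and yield the same statement.
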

 
The proof derives from the construction of ideal trajectories and from comparisons between the different predator flows.

\begin{proof}[Proof of Theorem \ref{thm:irr+}]
\textit{(i)} We assume that $rB-D\le0$ which is equivalent to $h^*_1=0$ .\medskip\\
We consider different cases depending on the position of the interval $I=\{k\}\times(h_-,h_+)$ with respect to the line $n\mapsto h^*_n$ of the predator equilibria and on the initial condition $(n,h)\in E$. These cases are illustrated on Figures \ref{fig:1} to \ref{fig:3emecas}. On these Figures, the state space $E$ is represented as the positive quadrant of $\R^2$ separated by the line $n\mapsto h^*_n$. The process $(Z_t,t\ge0)$ can only cross this line by a jump of the prey number. When the process is above this line, the predator density decreases, while it increases when the process is under this line.
The intervals which cross this line (i.e. such that $h^*_k\in(h_-,h_+)$) will play a specific role in the proof since they are stable by the predator flow $\phi_k$. \\
We introduce additional notations:
for all $m\in\N^*$, and $x,y\in\R_+$ we set $r^m(x,y)$ the time needed for the flow $\phi_m$ to go from $x$ to $y$. This time is well defined for $x\ge y > h^*_m$ or $x\le y< h^*_m$. In these cases, it satisfies
\bea
&\phi_m(x,r^m(x,y))=y.
\eea

\noindent\textbf{First case: the interval $I$ is stable for the flow $\phi_k$ (i.e. $h^*_k\in(h_-,h_+)$ if $h^*_k>0$ or $h_-=h^*_k=0$ otherwise)} 
\\
Our aim is to prove that for any $t>t_0$, $\Pro_{(n,h)}(Z_t\in I)>0$ for some $t_0\ge0$. The idea is to construct simple trajectories which enter the interval $I$ and arise with positive probability. \\
We split the reasoning into different sub-cases depending on the initial condition. We focus on initial conditions such that $h>h^*_n$. The other cases can be treated similarly by symmetry. 
\smallskip\\
\underline{$A$) If $n\le k$ and $h_-\vee h^*_n<h$.}\smallskip\\
\indent We first consider the specific sub-case where $n\le k$ and $h_-<h^*_n\le h\le h_+$ (see Figure \ref{fig:1}).\\
In this setting, we are interested in trajectories with exactly $k-n$ prey births. 
These trajectories reach the line $\{k\}\times\R_+$.
Furthermore, the number of predators remains in the interval $[h^*_n,h\vee h^*_k]\subset (h_-,h_+)$. This property derives from the fact that the predator density decreases as long as $H_t\ge h^*_{N_t}$ and remains therefore smaller than $h$ but greater than $h^*_{N_t}\ge h^*_n$ since $N_t\ge n$. If the process jumps below the line $n\mapsto h^*_n$ then the predator density increases and remains bounded by $h^*_k$.
Thus, after $k-n$ births events, the process reaches the interval $I$.
\begin{figure}[h!]
\begin{center}
\scalebox{0.6}{
\input{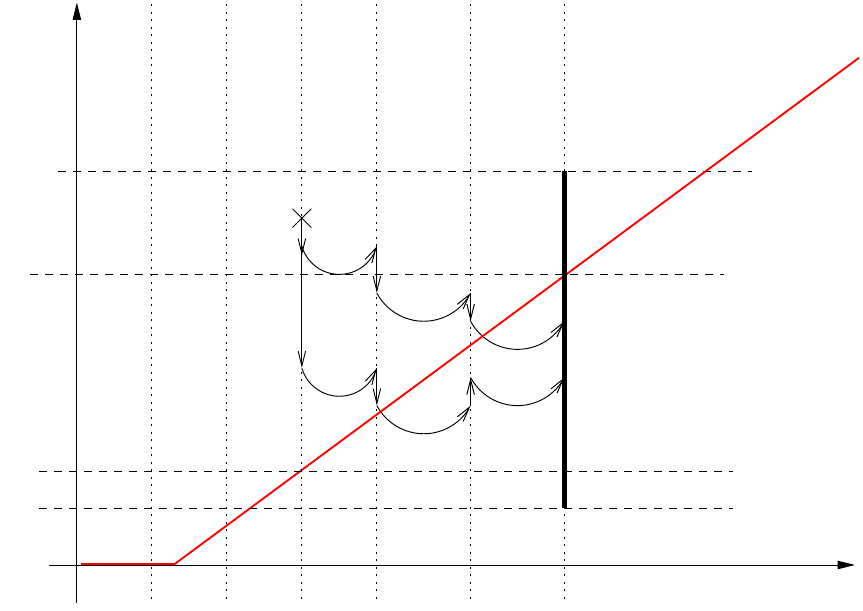_t}}
\end{center}
\caption{\small{(Case $1 A.$) Different ideal trajectories for the specific sub-case where the initial condition $(n,h)$ satisfies $n\le k$ and $h_-<h^*_n\le h\le h_+$. The red line is the map $m\mapsto h^*_m$.}}
\label{fig:1}
\end{figure}

\noindent Let us now prove that such trajectories occur with positive probability.
We first compute the probability that the first jump is a birth :
\bea
\Pro_{(n,h)}(\text{first jump is a birth})&=\E_{(n,h)}(\E(\un_{\text{first jump is a birth}}\lvert T_1))\\
&=\E_{(n,h)}\Bigl(\frac{bn}{\theta(n,\phi_n(h,T_1))}\Bigr)\\
&\ge \frac{bn}{\theta(n,\phi_n(h,h\vee h^*_n))},
\eea where the total jump rate $\theta(n,h)$ defined in \eqref{theta} increases in $n$ and $h$.\\
Then by induction, the probability that the $k-n$ first jumps are births, is greater than
 \be
\frac{bn}{\theta(n,h\vee h^*_n)}\times \frac{b(n+1)}{\theta(n+1,h\vee h^*_n)} \times \cdots \times  \frac{b(k-1)}{\theta(k-1,h\vee h^*_n)}>0.
\ee
Recall that the sequence of jump times of the prey population is denoted by $(T_m)_{m\in\N}$. Let us fix $t>0$.
Using the lower bound $h^*_n$ of the predator population size, we bound from below the probability that the $k-n$ births happen before $t$ by
\be
{\Pro_{(n,h)}}\bigl(T_{k-n}<t \big| k-n\text{ births}\bigr)\ge \Pro\bigl(\mathcal{P}oiss(t\theta(n,h^*_n)\bigr)\ge k-n)>0.
\ee
where $\mathcal{P}oiss(t\theta(n,h^*_n))$ is a random variable with Poisson distribution of parameter $t\theta(n,h^*_n)$.
Finally, we request that no other jump occurs before $t$, then
\be
\Pro{(n,h)}\bigl(T_{k-n+1}>t \big|T_{k-n}<t\text{ and } k-n\text{ births}\bigr)\ge \exp(-t \theta(k,h_+))>0
\ee 
Then the event $\bigl\{T_{k-n+1}>t \text{ and }T_{k-n}<t\text{ and } k-n\text{ births} \bigr\}$ has positive probability, and on this event $Z_t\in I$.
\medskip\\

Let us come back to the general case where $n\le k$ and $h_-\vee h^*_n<h$.\\
We will consider the trajectories which remain on $\{n\}\times\R_+$ until $H_t$ reaches $h_+$. Then, we will request that $k-n$ births occur before the predator population size reaches $h_-$.\\
Therefore, we define $r_1=r^n(h,h_+)$ when it exists and set $r_1=0$ otherwise. The first step is to require that $T_1>r_1$. Since in this case $h_+>h^*_k\ge h^*_n$ and $h> h^*_n$, the flow $\phi_n(h,.)$ decreases and thus
\be
\Pro_{(n,h)}(T_1>r_1)=\exp\Bigl(-\int_0^{r_1}\theta(n,\phi_n(h,s))ds\Bigr)\ge \exp(-r_1\theta(n,h))>0.
\ee 
Then, we define $r_2=r^n(h_+,h_-)$ when it exists (i.e. if $h^*_n\in (h_-,h_+)$) and set $r_2=+\infty$ otherwise. It is important to remark that $r_1+r_2=r^n(h,h_-)$. The specific case considered above corresponds to $r_1=0$ and $r_2=+\infty$.\\
We request that $T_{k-n}<r_1+r_2$ and that these $k-n$ jumps are births. An easy adaptation of the previous result shows that this event has positive probability.\\
Moreover, at time $T_{k-n}$, $H_{T_{k-n}}\in(h_-,h_+)$. The upper bound $H_{T_{k-n}}<h_+$ derives from the same reasoning as above. The lower bound of the predator density comes from comparisons of the different flows. We denote by $F_l$ the vector field associated with $\phi_l$: $F_l(y)=y(rBl-D-Cy)$, for $l\in\N^*$.\\
The first birth occurs at time $r_1<T_1<r_1+r_2$ and $H_{T_1}=\phi_n(h,T_1)>h_-$. The second jump happens at $T_2<r_1+r_2$ and $H_{T_2}=\phi_{n+1}(H_{T_1},T_2-T_1)>h_-$. 
Since $F_{n+1}(y)>F_{n}(y)$ for all $y\in\R_+$ then 
$\phi_{n+1}(H_{T_1},T_2-T_1)\ge \phi_n(H_{T_1},T_2-T_1)$ and thus
$H_{T_2}\ge \phi_n(h,T_2)$.
Then by iteration, we deduce that $H_{T_{k-n}}\ge\phi_n(h,T_{k-n})>\phi_n(h,r_1+r_2)\ge h_-$.\\
We define the time $t_0=r_1+r_2\un_{r_2<+\infty}$. Let us now consider $t>t_0$ and finally request that $T_{k-n+1}>t$. 
As in the previous case, we deduce that these trajectories occur with positive probability and satisfy $Z_t\in I$.
\begin{figure}[h!]
\begin{center}
\scalebox{0.6}{
 \input{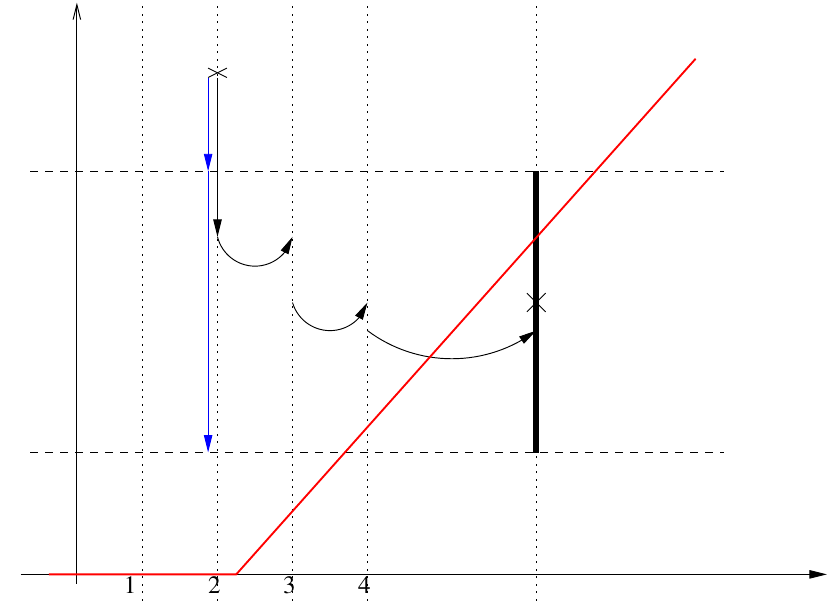_t}}
\end{center}
\caption{\small{(Case $1 A.$) An ideal trajectory. The vertical blue arrows represent the time needed by the flow to get from the tail to the head of the arrow.}}
\label{fig:2}
\end{figure}
\medskip\\
\noindent\underline{$B$) If $n\le k$ and $h^*_n\le h\le h_-$.} (See Figure \ref{fig:3})\smallskip\\
The challenge is to increase the predator density up to $h_-$. 
Let us fix a time $s>0$. We consider trajectories which have exactly $k-n$ jumps before $s$, which are births. Then using a similar reasoning to case $A)$, we deduce that $H_s\ge \phi_n(h,s)$. We define the time $r_1=r^k(\phi_n(h,s),h_-)$. 
Therefore, for every $t>t_0=s+r_1$, if no jump occurs on the time interval $[s,t]$, then
\be
H_t=\phi_k(H_s,t-s) > {\phi_k}(H_s,r_1),
\ee
since $t-s> r_1$. Moreover, $\phi_k(H_s,r_1)\ge \phi_k(\phi_n(h,s),r_1)=h_-$ as $H_s\ge \phi_n(h,s)$.
Thus, $Z_t\in I$.

\begin{figure}[h!]
\begin{center}
\scalebox{0.6}{
 \input{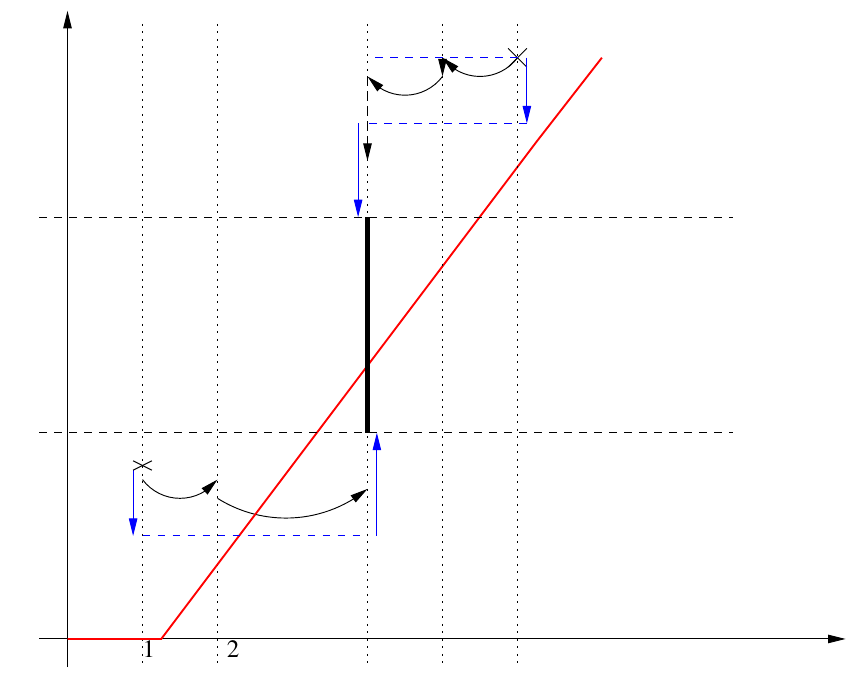_t}}
\end{center}
\caption{\small{(Cases $1B.$ and $1C.$) Examples of ideal trajectories. }}
\label{fig:3}
\end{figure}

\noindent\underline{$C$) If $n>k$ and $h>h^*_n$.} (See Figure \ref{fig:3})\smallskip\\
The reasoning is similar to the previous case, except that we aim at decreasing the predator density.  We consider trajectories which have exactly $n-k$ deaths before $s>0$. Then $H_s\le \phi_n(h,s)$. We define the time $r_1=r^k(\phi_n(h,s),h_+)$ when $\phi_n(h,s)\ge h_+$ and set $r_1=0$ otherwise. 
For every $t>t_0=s+r_1$, if no jump occurs on the time interval $[s,t]$, then $Z_t\in I$.
\bigskip\\

\noindent\textbf{Second case: The interval $I$ is below the line $n\mapsto h^*_n$(i.e. $h^*_k>h_+$) }\\
We will construct an auxiliary interval which is stable for the predator flow. Then, we will prove that starting from this interval, the process enters $I$ in some finite time.\\
We introduce the integer $m=\max\{l\in\N^*,h^*_l<h_+\}$. Once again, we split the reasoning in three cases depending on the position of $h^*_m$ with respect  to $(h_-,h_+)$ and the positivity of $h^*_m$.\smallskip\\
\underline{A) If $h_-< h^*_m<h_+$.}\smallskip\\
We define the interval $J=\{m\}\times(\frac{h_+ +h^*_m}{2},h_-)$ which is stable for the flow $\phi_m$ (see Figure \ref{fig:4}). Then, from the first case, there exists $t_0$, such that $\forall t\ge t_0$, $\Pro(Z_{t}\in J)>0$.\\
We set $r_1=r^k(\frac{h_+ + h^*_m}{2},h_+)$. Let us remark that the trajectories starting from $(n_0,h_0)\in J$ such that exactly $k-m$ births occur during $r_1$ satisfy that $Z_{r_1}\in I$. This derives once again from comparisons of the flows $\phi_l$ for $m\le l\le k$. Moreover, such trajectories arise with positive probability.\\
Therefore for any $t\ge t_0+r_1$, we deduce from the Markov property at time $t-r_1$ that
\be
\Pro_{(n,h)}\bigl(Z_{t}\in I\bigr)\ge\E_{(n,h)}\Bigl(\Pro_{Z_{t-r_1}}(Z_{r_1}\in I)\un_{Z_{t-r_1}\in J}\Bigr) \Pro_{(n,h)}\bigl(Z_{t-r_1}\in J\bigr)>0.
\ee
\begin{figure}[h!]
\begin{center}
\scalebox{0.6}{
 \input{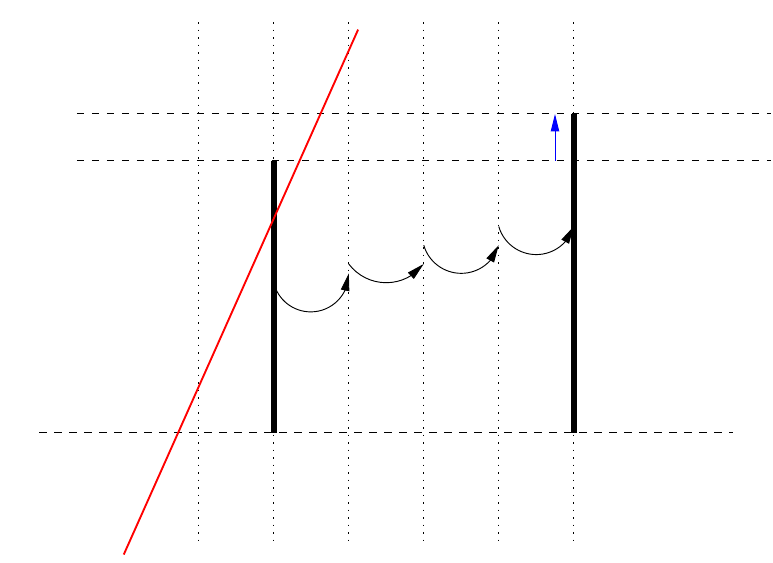_t}}
\end{center}
\caption{\small{(Case $2A.$) Construction of the auxiliary interval and of an ideal trajectory.}}
\label{fig:4}
\end{figure}

\medskip
\noindent\underline{B) If $0<h^*_m\le h_-<h_+$.}\smallskip\\ 
For this configuration we use the invertibility of the flow $\phi_k$. 
We will construct an interval $I'=\{k\}\times (u,v)$ such that
$\phi_k(u,s_0)=h_-$ and $\phi_k(v,s_0)=h_+$ for some $s_0>0$ and that furthermore satisfies that $u<h^*_m<v$.
\begin{figure}[h!]
\begin{center}
\scalebox{0.6}{
 \input{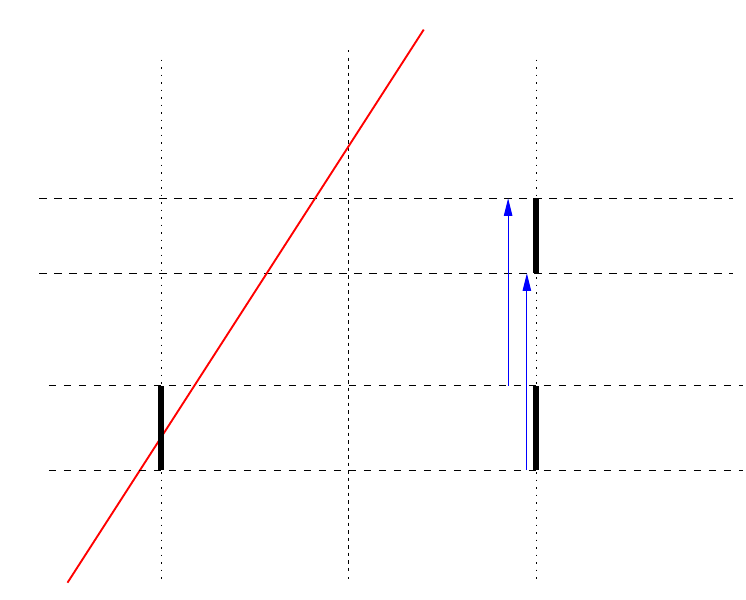_t}}
\end{center}
\caption{\small{(Case $2B.$) Construction of the auxiliary interval $I'$.}}
\label{fig:5}
\end{figure}

\noindent To this aim, we fix $\varepsilon\in(0,h^*_m)$ and remark that for any $k\ge 1$ and $0<\varepsilon<y\le h<h^*_k$, the equation $\phi_k(y,s)=h $ is equivalent to $y=\psi_{k,h}(s)$, where
$\psi_{k,h}(s)$ is the inverse image by the flow $\phi_k(.,s)$ of the point $h$. We deduce from \eqref{phi_n}  that the application $\psi_{k,h}$ is defined from $[0,r^k(\varepsilon,h)]$ to $[\varepsilon,h]$ by
\be
\psi_{k,h}(s)=\frac{h}{e^{s(rBk-D)}-\frac{hC}{rBk-D}(e^{s(rBk-D)}-1)}.
\ee 
It is continuous and strictly decreasing on $[0,r^k(\varepsilon,h)]$.\\
Furthermore, from the uniqueness of the flow we deduce that for any \linebreak[4] $ r^k(\varepsilon,h)\ge s\ge0$
$$\psi_{k,h_+}(s) > \psi_{k,h_-}(s).$$
Therefore, there exists a time $s_0>0$ such that the points $v=\psi_{k,h_+}(s_0)$ and $u=\psi_{k,h_-}(s_0)$ satisfy $u<h^*_m<v$ and we set $I'=\{k\}\times (u,v)$. \\
From the case $2.A)$ we deduce that there exists $t_1$ such that $\forall t\ge t_1$, 
$$
\Pro_{(n,h)}(Z_{t}\in I'\bigr)>0.
$$ 
For any trajectory which is in $I'$ at time $t$, we request that no jump occurs during $s_0$, which happens with positive probability. Therefore, using the Markov property at time $t$, we deduce that
$$\Pro_{(n,h)}(Z_{t+s_0}\in I\bigr)>0.$$
\medskip
\underline{C) If $h^*_m=0$}\smallskip\\
In this case the above construction 2B) does not work because the only stable interval on $\{m\}\times\R_+$ are of the form $\{m\}\times(0,a)$ with $a>0$, which would impose $s_0=+\infty$.\\
Let us fix a small $\delta>0$ and define the interval $I^3=\{m\}\times (h_-+\delta,h_+-\delta)$. We remark that we can adapt the previous reasoning to prove that for every $(n,h)\in E$ there exists $t_0>0$, such that $\forall t\ge t_0$ ,
$$\Pro_{(n,h)}(Z_t\in I^3)>0.$$ 
\begin{figure}[h!]
\begin{center}
\scalebox{0.7}{
\input{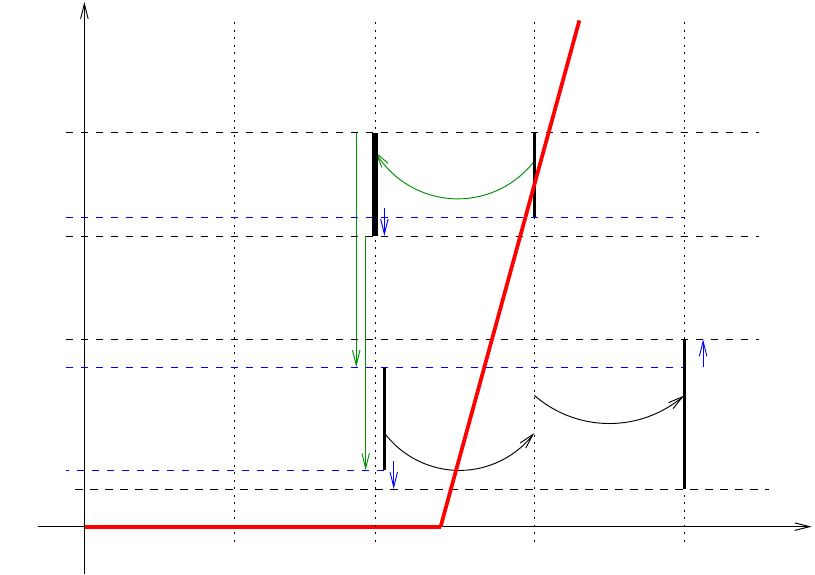_t}
}
\caption{\small{(Case 2C.) Construction of the auxiliary intervals $I_1$, $I_2$ and $I_3$.}}
\label{fig:cas2C}
\end{center}
\end{figure}

Let us explain this construction (see Figure \ref{fig:cas2C}). We first define $m'=\min\{ q\in\N^*, h^*_q > h_-+\delta\}$. As in the step 2B), we construct an auxiliary interval $I^2=\{m\}\times (a,b)$ with $s_0>0$
\bea
&h_+-\delta =\phi_m(b,s_0),\\
&h_-+\delta =\phi_m(a,s_0),\\
&\text{and } a<h^*_{m'} < b.
\eea
We fix $\eta>0$ such that $h^*_{m'}> a+\eta$ and set $I^1=\{m'\}\times (a+\eta, b)$. From the first step there exists $t_1$ such that $\forall t\ge t_1$, $\Pro_{(n,h)}(Z_t\in I^1)>0$. Starting from $I^1$ we request furthermore that exactly $m'-m$ successive deaths occur on the time interval $[t,t+r_1]$ with $r_1=r^m(a+\eta,a)$. This ensures that $Z_{t+r_1}\in I^2$. Furthermore we request that no jump occurs on $[t+r_1,t+r_1+s_0]$, and thus, $\forall t\ge t_1$ $\Pro_{(n,h)}(Z_{t+r_1+s_0}\in I^3)>0$.\\

We now define the times $r_2=r^{m}(h_-,h_-+\delta)$ and $r_3=r^k(h_+-\delta,h_+)$ and set $t_2=\min(r_2,r_3)$.
For any trajectory which is at time $t+r_1+s_0$ in $I^3$ we request furthermore that exactly $k-m$ successive births occur before the time $t+r_1+s_0+t_2$. Therefore, we deduce from the Markov property that $\forall t\ge t_0$
$$\Pro_{(n,h)}(Z_{t+r_1+s_0+t_2}\in I)>0.$$

\noindent\textbf{Third case: $h^*_k< h_- $}.\\
The proof is very similar to the second case. We introduce the smallest integer $m$ such that $h^*_m> h_-$ and adapt the previous reasoning by inverting birth and death events.
\\

\textit{(ii)} Let us now consider the situation where $h^*_1>0$. 
Starting from a point $(n_0,h_0)\in E$ such that $h\ge h^*_1$, the process cannot reach the set $\{z\in E, h\le h^*_1\}$ which corresponds to the hatched zone on Figure \ref{fig:3emecas}. Therefore, we restrict ourselves to the measure $\sigma'$ and initial conditions in $E'$. The proof is the similar to above.
\begin{figure}[h!]
\begin{center}
\scalebox{0.5}{
\input{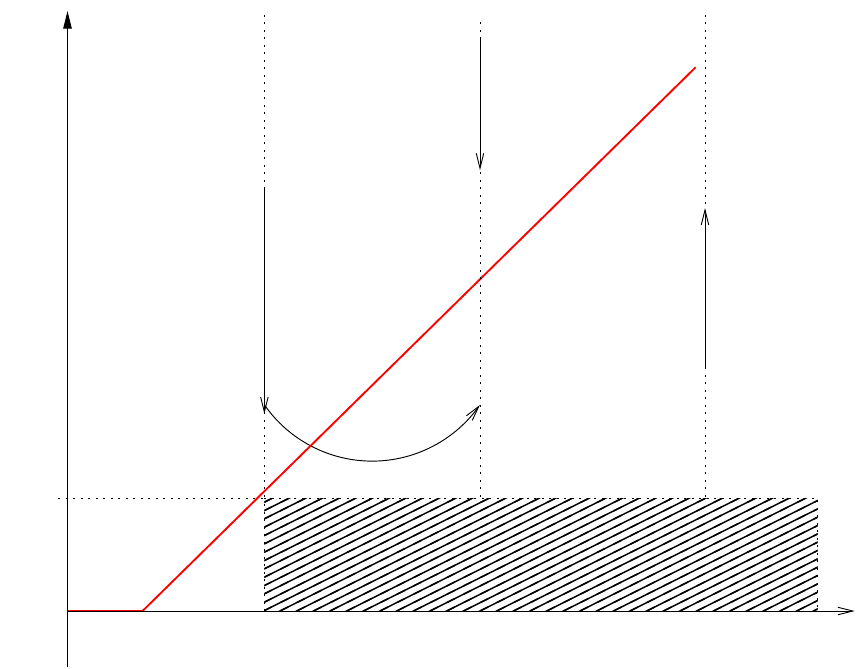_t}
}
\caption{\small{The hatched area corresponds to the points that can not be reached starting from an initial condition which is not in this zone.}}
\label{fig:3emecas}
\end{center}
\end{figure}

\end{proof}

\begin{proof}[Proof of Theorem \ref{thm:phiirr}]
We give the proof in the case where $h^*_1=0$, the other case being an easy adaptation.\\
For any $A\in\mathcal{B}(E)$ such that $\sigma(A)>0$, there exist an integer $k\in\N^*$ and a Borel set $A_k\in\mathcal{B}(\R)$ such that $\{k\}\times A_k\subset A$ and $\lambda(A_k)>0$. Once again, we split the proof in two sub-cases.\\
\textbf{First case: }Let us first assume that there exists an open interval $(h_-,h_+)\subset A_k$ with $h_-<h_+$ and define $I=\{k\}\times(h_-,h_+)$.\\We choose $\varepsilon$ small enough such that the interval $I_{\varepsilon}=\{k\}\times(h_-+\varepsilon,h_+-\varepsilon)$ still satisfies $\sigma(I_{\varepsilon})>0$ and fix a small $\delta>0$ such that $\forall (k,h')\in I_{\varepsilon}$, $\phi_k(h',\delta)\in I$.
\\
From Theorem \ref{thm:irr+} we can construct trajectories that belong to the interval $I_{\varepsilon}$ at time $t\ge t_0$ with positive probability for some $t_0>0$. We ask furthermore that no jump occurs during a time $\delta$. Then for any $t\ge t_0$
\bea
\Pro_{(n,h)}\bigl(Z_s\in I,&\forall s\in[t,t+\delta]\bigr)\\
&\ge \Pro_{(n,h)}\bigl(Z_t\in I_{\varepsilon} \text{ and no jump occurs on }[t,t+\delta]\bigr)\\
&\ge e^{-\delta \theta(k,h_+-\varepsilon)} \Pro_{(n,h)}\bigl(Z_t\in I_{\varepsilon}\bigr)>0.
\eea
Therefore
$$\E_{(n,h)}\Bigl(\int_{0}^{\infty} \un_A(Z_s)ds)\Bigr)\ge \delta \Pro_{(n,h)}\bigl(Z_s\in I,\forall s\in[t,t+\delta]\bigr)>0.
$$
\medskip\\
\textbf{Second case: }We now consider the case where $A_k$ doesn't contain any interval (as an example $\R\setminus \mathbb{Q}$, or a fat Cantor set). We consider an open bounded interval $(h_-,h_+)$ such that $\lambda(A\cap (h_-,h_+))>0$. Such an interval always exists since 
$$\lambda(A_k)=\sum_{M=0}^{\infty} \lambda(A_k\cap (M,M+1))>0.$$
Moreover, it is possible to choose $(h_-,h_+)$ such that $h^*_k\notin [h_-,h_+]$, i.e. this interval is not stable for the flow $\phi_k$. Indeed, the opposite case would imply by successive divisions of the interval, that for any $\varepsilon>0$, $\lambda\bigl(A_k \setminus(h^*_k-\varepsilon ,h^*_k+\varepsilon)\bigr) =0$. Thus, we would have that 
$0<\lambda(A_k)=\lambda\bigl(A_k\cap (h^*_k-\varepsilon ,h^*_k+\varepsilon)\bigr)\le 2\varepsilon$ for any $\varepsilon>0$, which is not possible.\\
We now restrict ourselves to the set $B_k=A_k\cap (h_-,h_+)$ with $\lambda(B_k)>0$ and assume that the flow $\phi_k$ increases on $(h_-,h_+)$ (the other case being an easy adaptation).\\
Let us fix $\varepsilon>0$. In the sequel we consider the trajectories that reach the interval $(h_--\varepsilon,h_-)$ and then, we ask that no jump occurs until these trajectories attain $h_+$. Then, the time spent by those trajectories in $B_k$ will be positive since the flows are continuous.
More precisely, from Theorem \ref{thm:irr+} we deduce that there exists $t_0$ such that $\forall t\ge t_0$, $\Pro_{(n,h)}\bigl(Z_t\in \{k\}\times(h_--\varepsilon,h_-)\bigr)>0$. We define the positive time $r_1=r^k(h_--\varepsilon, h_+)$ needed for the flow $\phi_k$ to go from $h_--\varepsilon$ to $h_+$. 
For all $t\ge t_0$, we consider the event
$$\mathcal{E}_t=\Bigl\{ Z_t\in \{k\}\times(h_--\varepsilon,h_-) \text{ and no jump occurs on }[t,t+r_1] \Bigr\}.$$ Then
$\Pro_{(n,h)}\bigl( \mathcal{E}_t\bigr)>0$
and
$$\E_{(n,h)} \Bigl(\int_t^{t+r_1} \un_{\{k\}\times B_k} (Z_s) ds \mid\mathcal{E}_t\Bigr) =\E_{(n,h)} \Bigl(\int_0^{r_1} \un_{ B_k} (\phi_k(Z_t,s)) ds \mid\mathcal{E}_t\Bigr).
$$
Since the flow $\phi_k$ is invertible, we make the change of variable $u=\phi_k(Z_t,s)$ and obtain
\bea
\E_{(n,h)} \Bigl(\int_t^{t+r_1}& \un_{\{k\}\times B_k} (Z_s) ds\mid\mathcal{E}_t \Bigr) \\&=\E_{(n,h)} \Bigl(\int_{Z_t}^{\phi_k(Z_t,r_1)} \frac{\un_{ B_k} (u)}{u(rBk-D-Cu)} du\mid\mathcal{E}_t \Bigr).
\eea
Since, $Z_t\in(h_--\varepsilon,h_-)$ and $\phi_k(Z_t,r_1)\in (h_+,h^*_k)$, we deduce that for some constant $v>0$, 
\be
\E_{(n,h)} \Bigl(\int_t^{t+r_1} \un_{\{k\}\times B_k} (Z_s) ds \mid\mathcal{E}_t\Bigr) >v\lambda(B_k )>0.
\ee
Thus, $\E_{(n,h)}\bigl(\int_{0}^{\infty} \un_{B_k}(Z_s)ds)\bigr)>0$ which concludes the proof of the irreducibility.
\end{proof}

\subsection{Positive Harris recurrence}
We recall the expression of the infinitesimal generator of the prey predator process given in \eqref{gene}. We prove in the following that it satisfies a Foster-Lyapunov criterion.
\begin{prop}
\label{prop:FL}
Let $V: E\mapsto [1,+\infty[$ be the function 
$V(n,h)=n+h/r$.
Then there exist $\delta,\gamma>0$ and a compact set $K$ such that
\ben
\label{drift}
\Acal V(z)\le -\gamma V(z)+\delta\un_K(z), \quad \quad \forall z\in E
\een 
\end{prop}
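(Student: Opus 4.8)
The plan is to apply $\Acal$ directly to $V$ and read off the inequality from the sign of the leading terms. Since $V(n,h)=n+h/r$ satisfies $\partial_2 V = 1/r$ and $V(n\pm1,h)-V(n,h)=\pm1$, substituting into \eqref{gene} gives, for $n\ge2$,
\begin{align*}
\Acal V(n,h) &= \frac{h(rBn-D-Ch)}{r} + bn - n(d+cn+Bh) \\
&= Bnh - \frac{Dh}{r} - \frac{Ch^2}{r} + bn - dn - cn^2 - Bnh \\
&= -cn^2 + (b-d)n - \frac{C}{r}h^2 - \frac{D}{r}h .
\end{align*}
The crucial point is that the predation flux $Bnh$ produced by the deterministic drift cancels \emph{exactly} against the $-Bnh$ coming from the prey death rate; this is precisely why the factor $1/r$ in front of $h$ in $V$ is needed, so that the drift coefficient $rBn$ matches the death coefficient $Bn$. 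For $n=1$ the death term is absent, and one finds instead
\[
\Acal V(1,h) = b + Bh - \frac{D}{r}h - \frac{C}{r}h^2 ,
\]
which differs from the generic formula only by the lower-order correction $b+Bh$ (no $-cn^2$, but a residual linear-in-$h$ term).

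Next I would fix any $\gamma>0$ and set $g(n,h):=\Acal V(n,h)+\gamma V(n,h)$, so that on $\{n\ge2\}$
\[
g(n,h) = -cn^2+(b-d+\gamma)\,n -\frac{C}{r}h^2+\frac{\gamma-D}{r}h ,
\]
with the analogous expression on $\{n=1\}$. In either case $g$ is a sum of a downward parabola in $n$ (leading coefficient $-c<0$) and a downward parabola in $h$ (leading coefficient $-C/r<0$), with \emph{no} surviving cross term; hence $g$ is concave and coercive, i.e. bounded above on $E$ and tending to $-\infty$ as $n+h\to\infty$. I would then take $\delta:=\sup_{E} g<\infty$ (note $\delta\ge g(1,0)=b+\gamma>0$) and let $K:=\{z\in E:\ g(z)\ge 0\}$, which is bounded in both coordinates and therefore compact in $E=\N^*\times\R_+$ since it meets only finitely many integers $n$ and a bounded range of $h$. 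By construction $g\le 0$ off $K$ and $g\le\delta$ on $K$, that is $g\le\delta\un_K$, which is exactly \eqref{drift}. The fact that the negative quadratic coefficients $-c$ and $-C/r$ are independent of $\gamma$ shows that any $\gamma>0$ is admissible.

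There is no genuine analytic obstacle here; the proof reduces to the computation above. The only thing to be careful about is the exact cancellation of the unbounded predation term $Bnh$, which is what prevents the unbounded jump rate $Bh$ from spoiling the negative drift, after which the logistic competition terms $-cn^2$ and $-Ch^2/r$ provide the strictly negative drift at infinity. (For the model with migration, $V(n+1,h)-V(n,h)=1$ gives $\Acal^{(m)}V = \Acal V + m$, so the same argument applies verbatim with $\delta$ replaced by $\delta+m$.)
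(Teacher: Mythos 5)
Your proof is correct and follows essentially the same route as the paper: compute $\Acal V$ directly from the generator, observe the exact cancellation of the $Bnh$ predation terms (which is the whole point of the weight $1/r$), and let the negative quadratic terms $-cn^2$ and $-Ch^2/r$ dominate at infinity. Your packaging of the conclusion via $g=\Acal V+\gamma V$, $K=\{g\ge0\}$ and $\delta=\sup_E g$ is a slightly cleaner bookkeeping than the paper's choice of a rectangular $K$ and $\delta=\max_K V$, but it is the same argument.
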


\medskip
\noindent We combine Theorem \ref{thm:phiirr} and Proposition \ref{prop:FL}, to deduce from Theorem \ref{thm:histo1} in Section \ref{subsec:rappel} that
\begin{theorem}
\label{thm:HR}
The process $Z_t$ is positive Harris recurrent and thus there exists a unique invariant probability measure $\pi$ on $E'$ which furthermore satisfies $\pi(V)<\infty$.  
\end{theorem}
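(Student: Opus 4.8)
The plan is to prove the Foster-Lyapunov inequality \eqref{drift} for $V(n,h)=n+h/r$ by directly computing $\Acal V(z)$ using the generator \eqref{gene}, and then to invoke Theorem \ref{thm:histo1}. Since $V$ is linear in each variable, the computation is tractable: the derivative term contributes $\partial_2 V(n,h) \cdot h(rBn-D-Ch) = (1/r)h(rBn-D-Ch)$, while the two jump terms contribute $bn$ (from the birth, since $V(n+1,h)-V(n,h)=1$) and $-n(d+cn+Bh)\un_{n\ge2}$ (from the death, since $V(n-1,h)-V(n,h)=-1$). Collecting these, I would obtain
\be
\Acal V(n,h) = \frac{h}{r}(rBn-D-Ch) + bn - n(d+cn+Bh)\un_{n\ge 2}.
\ee

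The key observation driving the whole estimate is that the predation term $Bnh$ appearing in the death rate and the benefit-from-predation term $hrBn/r = Bnh$ coming from the flow derivative \emph{cancel exactly}; this cancellation is precisely why the weight $1/r$ on $h$ is chosen. After this cancellation (valid on $n\ge 2$), the remaining expression is
\be
\Acal V(n,h) = bn - n(d+cn) - \frac{h}{r}(D+Ch),
\ee
so the dangerous quadratic terms $-cn^2$ and $-(C/r)h^2$ both carry negative signs. I would then argue that for $V(n,h)=n+h/r$ large, these negative quadratic contributions dominate the linear growth $bn$, yielding $\Acal V(z)\le -\gamma V(z)$ outside a compact set. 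Concretely, I would bound $\Acal V(z) \le bn - cn^2 - (C/r)h^2$ (dropping the harmless $-nd$ and $-(D/r)h$ terms), and note that since $-cn^2$ and $-(C/r)h^2$ grow faster than $V(z)=n+h/r$, one can choose $\gamma>0$ and a radius large enough that $\Acal V(z)+\gamma V(z)\le 0$ whenever $z$ lies outside a suitable compact set $K$; on $K$ the quantity $\Acal V(z)+\gamma V(z)$ is bounded, defining $\delta$.

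The main subtlety is the boundary case $n=1$, where the death-rate indicator $\un_{n\ge 2}$ removes the $-n(d+cn+Bh)$ term, so the predation cancellation no longer holds and $\Acal V(1,h)=\frac{h}{r}(rB-D-Ch)+b = Bh - \frac{h}{r}(D+Ch) + b$. Here the $+Bh$ term is \emph{not} cancelled, but it is only linear in $h$ and is still dominated by the negative quadratic $-(C/r)h^2$ from the flow, so the drift remains coercive for large $h$; I would simply treat the finitely many small-$n$ slices with this cruder bound and absorb the exception into the constant $\delta$ on the compact set. I expect this $n=1$ boundary case to be the only genuine obstacle, and it is handled by the same quadratic-domination argument. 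Once Proposition \ref{prop:FL} is established, Theorem \ref{thm:HR} follows immediately: Theorem \ref{thm:phiirr} gives irreducibility with respect to $\sigma$ (or $\sigma'$ when $h^*_1>0$), whose support has nonempty interior, so all hypotheses of Theorem \ref{thm:histo1} are met, yielding positive Harris recurrence, a unique invariant probability measure $\pi$, and $\pi(V)<\infty$.
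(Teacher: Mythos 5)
Your proposal is correct and follows essentially the same route as the paper: the paper's proof of Proposition \ref{prop:FL} performs exactly this computation of $\Acal V$, splits into the cases $n\ge 2$ (where the $Bnh$ terms cancel) and $n=1$ (where the uncancelled $+Bh$ term is absorbed by the quadratic $-Ch^2/r$), and concludes by quadratic domination outside a compact set before invoking Theorem \ref{thm:histo1} together with the irreducibility of Theorem \ref{thm:phiirr}. (Your sign on the $Dh/r$ term is in fact the correct one; the paper's displayed formula carries a harmless sign typo there.)
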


\begin{proof}[Proof of Proposition \ref{prop:FL}]

For all $(n,h)\in E$
\bea
\label{AV}
\Acal V(n,h) &= \frac{h}{r}(rB n-D-C h) +bn-n(d+cn+B h)\un_{n\ge2}\\
&= 
\left\{\begin{aligned}
&-h\Bigl(h\frac{C}{r}-\frac{D}{r}\Bigr) -n(nc -(b-d)),\quad \text{if } n\ge2\\
&-h\Bigl(h\frac{C}{r}-\frac{D}{r}-B\Bigr) +b,\quad \text{if } n=1\\
\end{aligned} \right.
\eea
For any $\gamma>0$, we obtain easily that there exists $n_0\in\N^*$ and $h_0>0$ such that 
\bea
&-n(nc -(b-d))\le -\gamma n,\quad \forall n\ge n_0\\
\text{and }&-h\Bigl(h\frac{C}{r}-\frac{D}{r}\Bigr) \le -\gamma \frac{h}{r},\quad \forall h\ge h_0.
\eea
Then for all $(n,h)$ with $n\ge n_0$ and $h\ge h_0$, $\Acal V(n,h)\le -\gamma V(n,h)$.  
Moreover since $n_0$ is finite, there exists $h_1(n_0)$ such that $\forall (n,h)$ with $n<n_0$ and $h\ge h_1$, $\Acal V(n,h)\le -\gamma V(n,h)$. Similarly, 
there exists $n_1(h_0)$ such that $\forall (n,h)$ with $h<h_0$ and $n\ge n_1$, $\Acal V(n,h)\le -\gamma V(n,h)$.\\
Therefore, setting $K=\{1,\cdots,\max(n_0,n_1)\}\times [0,\max(h_0,h_1)]$ and \linebreak[4] $\delta=\max\{V(n,h),(n,h)\in K\}$, the function $V$ introduced above clearly satisfies the Foster-Lyapunov criterion  \eqref{drift}.
\end{proof}

\begin{rem}
The function $V$ introduced in Proposition \ref{prop:FL} is not the only Lyapunov function of the system. An easy computation leads to the fact that $W(n,h)=n^2+h$ also satisfies \eqref{drift}. With Theorem \ref{thm:HR} we conclude that $\pi(W)<\infty$ which improves our knowledge of the invariant probability measure $\pi$.\end{rem}
\subsection{Exponential ergodicity}
\label{sebsec:experg}
In this section we investigate the convergence in total variation norm of the transition kernel toward the invariant measure. Let us first recall Theorem \ref{thm:exp-ergodicity_intro}.
\begin{theorem*}
\label{thm:exp-ergodicity}[Theorem \ref{thm:exp-ergodicity_intro}]
The community process $(Z_t)_{t\ge0}$ is exponentially ergodic. It converges toward its invariant probability measure $\pi$ at an exponential rate. There exist $0<\rho<1$ and $0<R<\infty$ such that, for all $ z\in E'$,
\ben
\label{expergodic}
\norm{P_t(z,.)-\pi}_{TV} \le R\rho^t V(z) 
\een
\end{theorem*}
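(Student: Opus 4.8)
The plan is to invoke Theorem~\ref{thm:histo2} (Theorem 6.1 in \cite{meyntweedie2} and \cite{meyntweedie3}), which already packages the exponential ergodicity from three ingredients: irreducibility with a support having non-empty interior, a Foster--Lyapunov inequality with the same function $V$, and the existence of one irreducible skeleton $(Z_{k\Delta})_{k\in\N}$. The first two ingredients are in hand: Theorem~\ref{thm:phiirr} gives irreducibility with respect to $\sigma'$ (whose support $E'=\N^*\times(h^*_1,+\infty)$ has non-empty interior), and Proposition~\ref{prop:FL} gives the drift inequality $\Acal V\le -\gamma V+\delta\un_K$ for $V(n,h)=n+h/r$. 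Since $V(z)=n+h/r$, the bound \eqref{expergodic} is exactly the conclusion $\norm{P_t(z,.)-\pi}_{TV}\le R\rho^t V(z)$ of Theorem~\ref{thm:histo2}. So the entire content of the proof reduces to establishing the third hypothesis, the existence of an irreducible skeleton.

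The hard part is therefore verifying that for some fixed $\Delta>0$ the discrete-time chain $(Z_{k\Delta})_{k\in\N}$ is irreducible in the sense that there is a measure $\nu$ with $\nu(A)>0\Rightarrow \sum_{k\ge0}P_{k\Delta}(z,A)>0$ for all $z\in E'$. The continuous-time irreducibility from Theorem~\ref{thm:irr+} is stronger than what I can directly transfer, but its actual statement is well suited: for any interval $I\subset E'$ with $\sigma'(I)>0$ and any starting point in $E'$, there exists $t_0$ such that $\Pro_{(n,h)}(Z_t\in I)>0$ for \emph{all} $t\ge t_0$. The key point is that this positivity holds for all large times, not merely for some time, so I can specialize $t=k\Delta$ for all integers $k$ with $k\Delta\ge t_0$. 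First I would fix any $\Delta>0$ and take as candidate irreducibility measure $\nu=\sigma'$ (or its restriction to a suitable generating family of intervals). For a set $A$ with $\sigma'(A)>0$ I would extract, as in the proof of Theorem~\ref{thm:phiirr}, an open interval $I=\{k\}\times(h_-,h_+)\subset A$ (reducing to the case of an interval exactly as in the first/second cases there). Then Theorem~\ref{thm:irr+}(ii) supplies $t_0$ with $\Pro_{(n,h)}(Z_t\in I)>0$ for every $t\ge t_0$; choosing any integer $k_0$ with $k_0\Delta\ge t_0$ gives $P_{k_0\Delta}(z,I)>0$, hence $\sum_{k\ge0}P_{k\Delta}(z,A)\ge P_{k_0\Delta}(z,A)>0$. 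This is precisely skeleton irreducibility.

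The only subtlety to handle carefully is the case in the proof of Theorem~\ref{thm:phiirr} where $A_k$ contains no interval; there one cannot simply point to $\Pro(Z_t\in I)$ for an interval $I\subset A$, and one argues instead via an entrance event $\mathcal{E}_t$ (reaching $\{k\}\times(h_--\varepsilon,h_-)$ followed by a jump-free flow crossing of $B_k$) that gives positive \emph{expected occupation time} of $A$. For the skeleton I need positive one-step transition probability rather than positive occupation time, so I would instead note that the event $\mathcal{E}_t$ already forces $Z_s\in\{k\}\times B_k$ for $s$ ranging over a subinterval of $[t,t+r_1]$ of positive Lebesgue measure, and I would discretize: since $B_k$ has positive Lebesgue measure and the flow $\phi_k$ is a diffeomorphism, one can choose $t$ (taken of the form $k\Delta$) so that $k\Delta$ itself lands the trajectory in $\{k\}\times B_k$ with positive probability, giving $P_{k\Delta}(z,A)>0$ directly. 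Once skeleton irreducibility is secured, I would record that the drift inequality of Proposition~\ref{prop:FL} transfers to the discrete skeleton (yielding $\E_z V(Z_\Delta)\le\gamma'V(z)+\delta'$ with $\gamma'<1$, as recalled after Theorem~\ref{thm:histo2}), so all hypotheses of Theorem~\ref{thm:histo2} hold and \eqref{expergodic} follows immediately with $V(z)=n+h/r$. I expect the genuine obstacle to be only this discretization of the no-interval case; everything else is a direct citation of Theorems~\ref{thm:histo2}, \ref{thm:phiirr}, \ref{thm:irr+} and Proposition~\ref{prop:FL}.
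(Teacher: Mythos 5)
Your overall architecture coincides with the paper's: reduce everything to Theorem~\ref{thm:histo2}, note that irreducibility of $Z$ (Theorem~\ref{thm:phiirr}) and the Foster--Lyapunov inequality (Proposition~\ref{prop:FL}) are already in hand, and observe that the only remaining hypothesis is the irreducibility of a skeleton $(Z_{k\Delta})_{k\in\N}$. Your treatment of Borel sets containing an open interval is exactly the paper's: since Theorem~\ref{thm:irr+} gives $\Pro_{(n,h)}(Z_t\in I)>0$ for \emph{all} $t\ge t_0$ (not merely some $t$), one may specialize $t=q\Delta$.

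However, your treatment of the case where $A_k$ contains no interval has a genuine gap. You assert that ``one can choose $t$ (taken of the form $k\Delta$) so that $k\Delta$ itself lands the trajectory in $\{k\}\times B_k$ with positive probability,'' justified only by the positivity of $\lambda(B_k)$ and the flow being a diffeomorphism. That is precisely the statement requiring proof, and it does not follow from those two facts. Positive expected occupation time of $B_k$ only yields a Lebesgue-positive set of times $s$ with $\Pro_z(Z_s\in\{k\}\times B_k)>0$, and such a set need not meet the lattice $\{m\Delta:m\in\N\}$; more fundamentally, to conclude that the law of $Z_{m\Delta}$ charges a Lebesgue-positive set with empty interior, that law must have an absolutely continuous component, which is not automatic for a PDMP whose only randomness lies in the jump times (a deterministic flow pushes a point mass to a point mass). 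The paper supplies exactly this missing ingredient: by the explicit change of variables $y=g_{(n,h,\Delta)}(s)$ it shows that the one-jump sub-kernel $T(z,A)=\Pro_z\bigl(Z_\Delta\in A \text{ and } J(\Delta)=1\bigr)$ has a density with respect to Lebesgue measure and that $z\mapsto T(z,A)$ is continuous on $E'$ for every Borel set $A$. It then takes $T(z_0,\cdot)$ for a fixed $z_0\in E'$ (rather than $\sigma'$) as the skeleton's irreducibility measure: for $A$ with $T(z_0,A)>0$, continuity yields a neighborhood $\mathcal{O}$ of $z_0$ on which $T(\cdot,A)>T(z_0,A)/2$, the open-set case gives $\Pro_z(Z_{q\Delta}\in\mathcal{O})>0$, and the Markov property gives $\Pro_z(Z_{(q+1)\Delta}\in A)>0$. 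Without some version of this density-and-continuity argument, your discretization step does not go through.
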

\begin{proof}
From Theorem \ref{thm:histo2} in Section \ref{subsec:rappel}, it remains to prove that a skeleton chain of the prey-predator process is irreducible. This condition is actually equivalent to the ergodicity for positive Harris recurrent processes (Theorem 6.1 \cite{meyntweedie2}).\\
 It derives immediately from Theorem \ref{thm:irr+}, that any skeleton chain $(Z_{k\Delta})_{k\in\N}$ (with $\Delta>0$) reaches any open Borel set with positive probability. Indeed, let $O$ be an open set of $\B(E')$ with $\lambda(\mathcal{O})>0$ then there exists an interval $I\subset\mathcal{O}$ with $\lambda(I)>0$ and thus, for any $z\in E'$ and for $q$ large enough, $\Pro_{z}(Z_{q\Delta}\in\mathcal{O})\ge \Pro_{z}(Z_{q\Delta}\in I)>0$.\\
To generalize from open Borel sets to Borel sets, we need some regularity of the function $z\mapsto\Pro_{z}(Z_{\Delta}\in A)$ for $A\in\mathcal{B}(E')$. 
We compute this probability by distinguishing the trajectories with the number of jumps $J(\Delta)$ occurring on $[0,\Delta]$, 
then for all $z\in E'$ and $A\in\mathcal{B}(E')$,
\bean
\label{sum}
\Pro_{z}\bigl( Z_{\Delta}\in A\bigr) &=\sum_{k=0}^{\infty} \Pro_{z}\bigl( Z_{\Delta}\in A \text{ and } J(\Delta) =k\bigr).
\eean
We recall that the sequence of jump times of the prey population is denoted by $(T_k)_{k\in \N}$ and that $\Pro_{z}(T_1\ge t)=e^{-\Theta(z,t)}$ where $\Theta(z,t)=\int_0^t \theta(\phi(z,s))ds $ and the total jump rate $\theta(z)$ is given by \eqref{theta}.\\
The first term of \eqref{sum} handles trajectories where no jump occurs. It is given by
$$
\Pro_{z}\bigl( Z_{\Delta}\in A \text{ and } J(\Delta) =0\bigr)= e^{-\Theta(z,\Delta)ds} \un_A(\phi(z,\Delta)).
$$
This function is not continuous in $z$ since the indicator function $\un_{A}$ is not continuous and the total flow $\phi(z,\Delta)$ is continuous.\\
The idea is then to bound from below $\Pro_{z}(Z_{\Delta}\in A)$ by a continuous function (see Chapter 6 of \cite{meyntweedie} and \cite{bayer2011existence}). In the sequel we consider
\bea
\Pro_{z}\bigl( Z_{\Delta}\in A\bigr) 
&\ge \Pro_{z}\bigl( Z_{\Delta}\in A \text{ and } J(\Delta) =1\bigr),
\eea
and prove that for any $A\in\mathcal{B}(E')$, the function $z\mapsto T(z,A)=\Pro_{z}\bigl( Z_{\Delta}\in A\text{ and } J(\Delta) =1\bigr)$ is continuous on $E'$. The continuity will derive from the fact that the law of first jump time has a density with respect to the Lebesgue measure.\\
Indeed, for any $z=(n,h)\in E'$
\bea
\Pro_{z}&\bigl( Z_{\Delta}\in A \text{ and } J(\Delta) =1\bigr)=\Pro_{z}\bigl( Z_{\Delta}\in A \text{ and } T_1\le \Delta <T_2\bigr)\\
&=\int_0^{\Delta}\un_A \Bigl(n+1, \phi_{n+1}\bigl( \phi_n(h,s) ,\Delta-s \bigr)\Bigr) bn e^{- \Theta(n,h,s)}e^{\Theta(n+1,\phi_n(h,s),\Delta-s)} ds\\ 
&+\int_0^{\Delta}\un_A \Bigl(n-1, \phi_{n-1}\bigl( \phi_n(h,s) ,\Delta-s \bigr)\Bigr) n(d+cn+B\phi_n(h,s))\un_{n\ge2}\\
&\quad\quad \times e^{-\Theta(n,h,s)}e^{\Theta(n-1,\phi_n(h,s),\Delta-s)} ds.
\eea
The first integral corresponds to the event where a birth occurs at $T_1$ while the second interval to the event where a death happens at $T_1$. In the sequel, we consider the first integral. The study on the second integral is very similar and will not be detailed.
The predator density at time $\Delta$ conditioned on the fact that only one jump happens on $[0,\Delta]$ and is a birth occurring at time $s\in[0,\Delta]$ is given by 
\be
g_{(n,h,\Delta)}(s)=\phi_{n+1}\bigl( \phi_n(h,s) ,\Delta-s \bigr).
\ee
We note that for any $s$ and $\Delta$, the application $(n,h)\mapsto g_{(n,h,\Delta)}(s)$ is continuously differentiable.
To perform the change of variable $y=g_{(n,h,\Delta)}(s)$ in the previous integral, we have to verify that $\frac{d}{ds}g_{(n,h,\Delta)}(s)$ does not vanish.
\be
\frac{d}{ds}g_{(n,h,\Delta)}(s)= \partial_2 \phi_n(h,s) \cdot \partial_1 \phi_{n+1}\bigl( \phi_n(h,s) ,\Delta-s \bigr) -\partial_2\phi_{n+1}\bigl( \phi_n(h,s) ,\Delta-s \bigr).
\ee
We recall that $\phi_n$ is the flow associated with \eqref{eqdiff-phin}, then
$$\partial_2 \phi_n(h,s)=\phi_n(h,s) (rBn-D-C\phi_n(h,s)).
$$
From the exact expression \eqref{phi_n} we obtain that
$$\partial_1 \phi_n(h,s)=\frac{\phi_n(h,s) }{h\bigl(1+\frac{hC}{rBn-D}\bigl( e^{(rBn-D)s}-1 \bigr)\bigr)}.
$$
Then an easy calculation using \eqref{phi_n} leads to
\bea
\frac{d}{ds}g_{(n,h,\Delta)}(s)= &-rB\frac{g_{(n,h,\Delta)}(s)}{1+\frac{\phi_n(h,s)C}{rB(n+1)-D}\bigl( e^{(rB(n+1)-D)(\Delta-s)}-1 \bigr) }<0.
\eea
Let us finally remark that 
$$g_{(n,h,\Delta)}(0)=\phi_{n+1}(h,\Delta)\quad\text{and}\quad g_{(n,h,\Delta)}(\Delta)= \phi_n(h,\Delta),$$ 
hence
\bean
\label{int1}
\int_0^{\Delta}\un_A& \Bigl(n+1, \phi_{n+1}\bigl( \phi_n(h,s) ,\Delta-s \bigr)\Bigr) bn e^{- \Theta(n,h,s)}e^{\Theta(n+1,\phi_n(h,s),\Delta-s)} ds\\
&=\int_{\phi_n(h,\Delta)}^{\phi_{n+1}(h,\Delta)}\un_A \Bigl(n+1,y\Bigr) f(n,h,\Delta,y)dy,
\eean
where
\be
f(n,h,\Delta,y)=bn\frac{ e^{- \Theta\bigl( n,h, g_{(n,h,\Delta)}^{-1}(y) \bigr)}e^{\Theta\bigl( n+1,\phi_n\bigl(h, g_{(n,h,\Delta)}^{-1}(y)\bigr),\Delta-g_{(n,h,\Delta)}^{-1}(y)\bigr)} }
{\left|\frac{d}{ds}g_{(n,h,\Delta)}(s )\Bigr|_{s=g_{(n,h,\Delta)}^{-1}(y) }\right|}.
\ee
The function of $(n,h)$ given defined by \eqref{int1} is continuous since the upper and the lower bounds of the integral are continuous functions of $(n,h)$ on $E'$ and the integrand is continuous in $(n,h)$ on $E'$ and locally bounded.\\
To conclude with the irreducibility of $(Z_{k\Delta})_{k\in \N}$, we fix a point $z_0\in E'$ and remark that the measure $T(z_0,\cdot)=\Pro_{z}\bigl( Z_{\Delta}\in \cdot\text{ and } J(\Delta) =1\bigr)$ is non degenerate since $T(z_0,E')>0$.\\
For any $A\in\mathcal{B}(E')$ such that $T(z_0,A)>0$, there exists, by continuity of $T$, an open neighborhood $\mathcal{O}$ of $z_0$ such that $\forall z\in\mathcal{O}$, $T(z,A)>T(z_0,A)/2$. Moreover we deduce from Theorem \ref{thm:irr+} that for any initial condition $(n,h)$ there exists $q\in\N$ such that 
$\Pro_{(n,h)}(Z_{q\Delta}\in\mathcal{O})>0.$
Then it derives from the Markov property at time $q\Delta$ and from the properties of the kernel $T$ that
\bea
\Pro_{(n,h)}\bigl(Z_{(q+1)\Delta}\in A\bigr)&\ge \E_{(n,h)}\bigl(\un_{Z_{(q+1)\Delta}\in A} \E_{Z_{q\Delta}} \bigl(\un_{ Z_{\Delta}\in\mathcal{O}}\bigr)\bigr)\\
&\ge \Pro_{(n,h)}\bigl(Z_{q\Delta}\in\mathcal{O}\bigr)\frac{T\bigl(z_0,A\bigr)}{2}>0
\eea
Hence, $(Z_{k\Delta})_{k\in\N}$ is irreducible with respect to the measure $T(z_0,\cdot)$.
\end{proof}

\begin{rem}
Here we chose to study the behavior of skeletons of the process in order to derive the exponential ergodicity of the process. This standard method has already been used in the context of PDMP (see for example \cite{last2004ergodicity,bayer2011existence}). Costa and Dufour \cite{dufour1999stability, costa2008stability}  developed a different approach  based on an embedded Markov chain whose long time behavior is equivalent to those of the PDMP. For the prey-predator process $Z$ it seems to us more natural to study the continuous time trajectories since comparisons where possible. Moreover in future work, we aim at deriving from these trajectory constructions more quantitative convergence speeds using recently developed coupling methods for PDMP (e.g. \cite{bouguet2013quantitative,fontbona2012quantitative}).
\end{rem}

\begin{rem}
The results of this section can be extended to the model including migration introduced in Remark \ref{rem_migr}. Indeed, using the very same trajectory construction, one can prove that the process $(N^{(m)},H^{(m)})$ is irreducible on $E^{(m)}$ for the Lebesgue measure $\sigma$. Moreover, the infinitesimal generator \eqref{gene_migr} satisfies a Foster-Lyapunov inequality with the same Lyapunov function. Therefore  $(N^{(m)},H^{(m)})$ admits a unique probability invariant measure. Finally, by adapting the proof of Theorem \ref{thm:exp-ergodicity_intro} we get that it is exponentially ergodic.
\end{rem}

\section{Re-scaling the predator dynamics}
\label{sec:averaged}
We introduce a new parameter $\varepsilon$ which rescales the predator dynamics and illustrates the biological assumption that the predator mass is almost negligible comparing to the prey mass. There exists an important literature about the metabolic theory which describes the relationships between mass and metabolic characteristics of living individuals (see among others \cite{Damuth81,brown2004toward}).
Following this theory, the demographic parameters of individuals increase when their mass decreases.  Here we simplify these relationships by assuming that the predator parameters vary as $1/\varepsilon$.\\
For any $\varepsilon\in(0,1]$, we consider the community process $\Ze=(\ne,\he)$ where for all $t\ge0$, 
\ben
\label{eq-diff-eps}
\frac{d}{dt}\he_t=\frac{\he_t}{\varepsilon}(rB\ne_t-D-C\he_t)
\een
and the dynamics of $\ne$ is given by the jump mechanism \eqref{sauts} associated with the predator population $\he$.
The process studied in the previous sections corresponds to $\varepsilon=1$ or to the parameters  $r^{\varepsilon}=r/\varepsilon$, $D^{\varepsilon}=D/\varepsilon$ and $C^{\varepsilon}=C/\varepsilon$.\\
This scaling changes the time scale of the predator flow. If $\phi_n^{\varepsilon}$ is the flow associated with \eqref{eq-diff-eps} then 
\ben
\label{lien-flot}
\phi^{\varepsilon}_n(h,t)=\phi^1_n(h,\frac{t}{\varepsilon}), \quad \forall t\ge0, \forall (n,h)\in E.
\een

\subsection{Convergence toward an averaged process}
In the sequel we study the limit as $\varepsilon$ tends to $0$ of the sequence $\Ze$ in $\mathbb{D}([0,T],E)$. The prey-predator process is a slow-fast system. As $\varepsilon$ diminishes, the predator process converges faster to its equilibrium between the jumps of the prey population. The slow dynamics of the prey population is then averaged on the predator equilibria.
\smallskip\\
We first give the expression of the infinitesimal generator of the process $\Ze$ defined above: $\forall f\in \Ecal$ and $(n,h)\in E$
\bea
\mathcal{A}^{\varepsilon}f(n,h)=&\partial_2 f(n,h)\frac{h(rBn-D-Ch)}{\varepsilon}+bn(f(n+1,h)-f(n,h))\\
&+n(d+cn+Bh)(f(n-1,h)-f(n,h))\un_{n\ge2}.
\eea

To carry out the limit as $\varepsilon\to0$, we use the fact that the convergence speed of the flow $\phi_n^{\varepsilon}$ to its equilibrium $h^*_n$ is uniform in $\varepsilon$. This is only true if the number of predators remains bounded from below by some strictly positive constant. To this aim, we make the following assumptions
\bean
\label{hyp2}
&i)\quad rB-D>0.\text{ This implies that the predator population cannot }\\&\quad\text{become extinct} \\
&ii)\quad \text{We restrict ourselves to initial conditions in the set }\\&\quad E'=\{1,\cdots\}\times [h^*_1,\infty).
\eean
The state space $E'$ is stable for the prey-predator dynamics and is the support of the irreducibility measure $\sigma'$ introduced in Theorem \ref{thm:phiirr} ii).
\begin{prop}
\label{prop:unif-conv}
Under Assumption \eqref{hyp2}, there exists $\delta>0$ such that for all $(n,h)\in E'$,
\ben
\label{unif-conv}
|\phi_n^{\varepsilon}(h,t)-h^*_{n}|\le|h-h^*_{n}|e^{-\delta\frac{t}{\varepsilon}}
\een
\end{prop}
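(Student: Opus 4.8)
The plan is to reduce the statement to the case $\varepsilon=1$ and then to prove a contraction of the flow toward its equilibrium at a rate that is \emph{uniform} in both $n$ and the starting height $h$. By the scaling relation \eqref{lien-flot} we have $\phi_n^{\varepsilon}(h,t)=\phi_n^1(h,t/\varepsilon)$, so it suffices to exhibit a $\delta>0$, independent of $n$ and $h$, with $|\phi_n^1(h,t)-h^*_n|\le |h-h^*_n|e^{-\delta t}$ for all $(n,h)\in E'$; substituting $t\mapsto t/\varepsilon$ then gives \eqref{unif-conv} directly.

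First I would record a uniform strictly positive lower bound for the flow. Under Assumption \eqref{hyp2}(i) one has $rBn-D\ge rB-D>0$ for every $n\ge1$, so each $h^*_n=(rBn-D)/C$ is positive and nondecreasing in $n$; in particular $h^*_n\ge h^*_1=(rB-D)/C>0$. Since for fixed $(n,h)$ the one-dimensional flow $t\mapsto\phi_n^1(h,t)$ is monotone (decreasing if $h>h^*_n$, increasing if $h<h^*_n$) and converges to $h^*_n$, it remains between $h$ and $h^*_n$ for all $t\ge0$. Hence for every initial condition in $E'$, where $h\ge h^*_1$, it follows that $\phi_n^1(h,t)\ge\min(h,h^*_n)\ge h^*_1>0$, uniformly in $n$, $h$ and $t$.

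The contraction then comes from a short Lyapunov computation. Writing $w(t)=\phi_n^1(h,t)-h^*_n$ and using $rBn-D-C\phi_n^1=C(h^*_n-\phi_n^1)=-Cw$, equation \eqref{eqdiff-phin} becomes $w'=-C\,\phi_n^1(h,t)\,w$, so that
\[ \frac{d}{dt}w(t)^2 = -2C\,\phi_n^1(h,t)\,w(t)^2 \le -2C h^*_1\,w(t)^2, \]
by the lower bound above. Gronwall's lemma yields $w(t)^2\le w(0)^2 e^{-2Ch^*_1 t}$, that is $|\phi_n^1(h,t)-h^*_n|\le|h-h^*_n|e^{-Ch^*_1 t}$, and taking $\delta=Ch^*_1=rB-D$ settles the case $\varepsilon=1$; the scaling argument of the first paragraph then concludes.

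The only delicate point, and the reason Assumption \eqref{hyp2} is imposed, is the uniformity of $\delta$ in $n$ and $h$. In the estimate above the contraction rate is $2C\phi_n^1(h,t)$, which is bounded below by $2Ch^*_1$ precisely because we restrict to $E'$: this keeps the flow bounded away from $0$, where the linearization of \eqref{eqdiff-phin} would degenerate to a vanishing rate. The hypothesis $rB-D>0$ is exactly what guarantees $h^*_1>0$, and hence $\delta=Ch^*_1>0$. One could instead read the estimate off the explicit solution \eqref{phi_n}, but the differential inequality avoids manipulating that expression and makes the source of the uniform rate transparent.
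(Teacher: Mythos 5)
Your proof is correct, and it takes a genuinely different route from the paper's. The paper reduces to $\varepsilon=1$ via \eqref{lien-flot} exactly as you do, but then works directly from the explicit expression \eqref{phi_n}: it computes $\phi_n(h,t)-h^*_n=(h-h^*_n)\bigl(1+\tfrac{h}{h^*_n}(e^{t(rBn-D)}-1)\bigr)^{-1}$ and checks that the denominator dominates $e^{\delta t}$ by differentiating in $t$, which requires $hC\ge\delta$ and $rBn-D\ge\delta$ and hence $\delta<Ch^*_1$ on $E'$. You instead avoid the closed form entirely: you extract the uniform lower bound $\phi_n^1(h,t)\ge\min(h,h^*_n)\ge h^*_1$ from the monotonicity of the scalar flow, rewrite the vector field as $w'=-C\phi_n^1(h,t)\,w$ with $w=\phi_n^1-h^*_n$, and apply Gronwall to $w^2$. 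Both arguments hinge on the same two facts (the scaling \eqref{lien-flot} and the lower bound $h^*_1>0$ coming from Assumption \eqref{hyp2}), and they produce essentially the same rate, your version even reaching the endpoint $\delta=Ch^*_1=rB-D$ rather than any $\delta<Ch^*_1$. What your approach buys is robustness and transparency: it would survive a modification of the predator equation that destroys the explicit solution, and it makes clear that the only role of $E'$ and of $rB-D>0$ is to keep the flow away from the degenerate equilibrium at $0$. What the paper's approach buys is that it stays entirely within the formula \eqref{phi_n} already established and used elsewhere (e.g.\ in the exponential ergodicity proof), at the cost of a slightly more opaque monotonicity check.
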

\begin{proof}
Let us remark that thanks to \eqref{lien-flot}, it is sufficient to prove the result for $\varepsilon=1$.\\
Using \eqref{phi_n}, an easy computation leads to
\bea
|\phi_n(h,t)-h^*_{n}| =\left| \frac{h-h^*_{n}}{1+\frac{h}{h^*_{n}}(e^{-t(rBn-D)}-1)}\right |, \quad \forall (n,h)\in E' \text{ and } t\ge0.
\eea
Therefore it is enough to find $\delta>0$ satisfying $\forall (n,h)\in E'$  and $ t\ge0$:
$$
e^{\delta t} \le 1+\frac{h}{h^*_{n}}(e^{-t(rBn-D)}-1).
$$
The function $t\mapsto 1+\frac{h}{h^*_{n}}(e^{-t(rBn-D)}-1)-e^{\delta t}$ equals $0$ for $t=0$, and increases as soon as 
$hC\ge \delta$ and $rBn-D\ge\delta$. Since $h\ge h^*_1>0$ we choose $\delta<Ch^*_1 $ to obtain \eqref{unif-conv}.
\end{proof}
\bigskip

Following Kurtz \cite{kurtz1992averaging}, we introduce the predator occupation measure
\ben
\label{occupation}
\Gamma^{\varepsilon}([0,t],A)=\int_0^t \un_A(\he_s)ds, \quad \forall t\ge0 \text{ and }A\in\mathcal{B}(\R^+).
\een 
This random measure belongs to the set $\mathcal{M}_m(\R_+)$ of measures $\mu$ on $\R_+\times\R_+$ such that $\mu([0,t]\times\R_+)=t$, $\forall t\ge0$. For any $t\ge0$, we denote by $\mathcal{M}_m^t(\R_+)$ the set of the measures $\mu\in \mathcal{M}_m(\R_+)$ restricted to $[0,t]\times\R_+$.\\
In the sequel we prove using the averaging method developed in \cite{kurtz1992averaging} that the sequence $(\ne,\Gamma^{\varepsilon})$ converges in law. This method allows us to avoid the difficulties related to the fast convergence of the predator flow to its equilibrium.
\begin{theorem}
 \label{thm:cv-moy}
Fix $ T>0$ and assume \eqref{hyp2}. We suppose that the sequence of initial conditions $(\Ze_0)_{0<\varepsilon\le1}$ converges to $\widebar{Z}_0$ in law
and moreover that
\ben
\label{hyp:moment2}\sup_{0<\varepsilon\le1}\E((\ne_0)^4)<\infty
, \quad \sup_{0<\varepsilon\le1}\E((\he_0)^4)<\infty.
\een
Then the sequence $\bigl(\ne,\Gamma^{\varepsilon}\bigr)$ converges in law toward $(\av,\Gamma)$ in $\mathbb{D}([0,T],\N)\times \mathcal{M}_m^T(\R_+)$  as $\varepsilon\to 0$. \\
The process $\av$ is a pure jump process on $\N^*$ whose infinitesimal generator is well defined for every measurable and bounded function $f:\N^*\to\R$ by
\ben
\label{gen-lim}
\mathcal{L} f(n)=(f(n+1)-f(n))bn+(f(n-1)-f(n)) n(d+cn+Bh^*_n)\un_{n\ge2}.\een
Moreover, the limiting measure is defined by $\Gamma(ds \times dy)=\delta_{h^*_{\av_s}}(dy)ds$.
\end{theorem}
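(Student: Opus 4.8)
The plan is to follow the compactness-identification (averaging) method of Kurtz \cite{kurtz1992averaging}. First I would establish tightness of the pair $(\ne,\Gamma^{\varepsilon})$ in $\mathbb{D}([0,T],\N)\times\mathcal{M}_m^T(\R_+)$; then I would identify every subsequential limit $(\av,\Gamma)$ by showing that $\Gamma(ds\times dy)=\delta_{h^*_{\av_s}}(dy)\,ds$ and that $\av$ solves the martingale problem associated with the generator $\mathcal{L}$ of \eqref{gen-lim}. Well-posedness of that martingale problem would then upgrade subsequential convergence to convergence of the whole family and simultaneously pin down $\Gamma$.

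For tightness, the key input is that $\ne$ is stochastically dominated by a pure birth process of rate $bn$, exactly as in Theorem \ref{thm:1}$(i)$; combined with \eqref{hyp:moment2} this yields uniform moment bounds $\sup_{0<\varepsilon\le1}\E(\sup_{t\le T}(\ne_t)^4)<\infty$ and, through the flow estimate $\he_t\le \he_0\vee\frac{rB}{C}\sup_{s\le t}\ne_s$, the analogous bound for $\he$. Tightness of $\ne$ in $\mathbb{D}([0,T],\N)$ then follows from an Aldous criterion applied to its martingale decomposition. Tightness of $\Gamma^{\varepsilon}$ in $\mathcal{M}_m^T(\R_+)$ reduces to a tightness-in-$y$ condition, which holds because $\int_0^T\!\int_{\R_+} y\,\Gamma^{\varepsilon}(ds\times dy)=\int_0^T \he_s\,ds$ has expectation bounded uniformly in $\varepsilon$.

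Next I would identify the limit. Extract a subsequence along which $(\ne,\Gamma^{\varepsilon})\to(\av,\Gamma)$ in law. The identification $\Gamma(ds\times dy)=\delta_{h^*_{\av_s}}(dy)\,ds$ is where Proposition \ref{prop:unif-conv} is decisive: the uniform relaxation $|\phi_n^{\varepsilon}(h,t)-h^*_n|\le|h-h^*_n|e^{-\delta t/\varepsilon}$ shows that between two consecutive prey jumps $\he$ sits within $o(1)$ of $h^*_{\ne}$ except on a time window of order $\varepsilon$. Since the number of prey jumps on $[0,T]$ has uniformly bounded expectation, for bounded continuous $g$ one obtains $\E\int_0^T|g(\he_s)-g(h^*_{\ne_s})|\,ds\to0$, forcing $\int_0^T\!\int g(y)\,\Gamma^{\varepsilon}(ds\times dy)$ to converge to $\int_0^T g(h^*_{\av_s})\,ds$. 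For the martingale problem, note that for $f$ bounded on $\N^*$ the fast term of $\Acal^{\varepsilon}$ annihilates $f$, so
\be
M^{f,\varepsilon}_t=f(\ne_t)-f(\ne_0)-\int_0^t\!\int_{\R_+}\mathcal{B}f(\ne_s,y)\,\Gamma^{\varepsilon}(ds\times dy)
\ee
is a martingale, where $\mathcal{B}f(n,y)=bn(f(n+1)-f(n))+n(d+cn+By)(f(n-1)-f(n))\un_{n\ge2}$. Passing to the limit using the convergence of $(\ne,\Gamma^{\varepsilon})$, the identification of $\Gamma$, and the fourth-moment bounds to secure uniform integrability of the integrand, I obtain that $f(\av_t)-f(\av_0)-\int_0^t\mathcal{L}f(\av_s)\,ds$ is a martingale. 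As the birth rate is linear, $\av$ is non-explosive and the $\mathcal{L}$-martingale problem is well-posed, which concludes the argument.

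The hard part will be the two places where the fast, unbounded variable $y$ enters: identifying $\Gamma$ (ruling out that $\Gamma^{\varepsilon}$ lets mass drift to $y=\infty$, while exploiting the $\varepsilon$-uniform relaxation near jump times) and justifying the limit in the martingale term, whose integrand $\mathcal{B}f(\ne_s,\he_s)$ couples the linear predator contribution $B\he_s$ with the quadratic prey factor $n(d+cn)$. Controlling these products uniformly in $\varepsilon$ is exactly what the fourth-moment hypothesis \eqref{hyp:moment2} is designed to provide.
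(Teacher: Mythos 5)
Your proposal is correct and follows the same compactness--identification averaging scheme as the paper (domination by a pure birth process, Aldous tightness, Kurtz's occupation-measure framework, and Proposition \ref{prop:unif-conv} as the decisive estimate), but the identification step is organized in the opposite order, and this is a genuine difference. The paper first identifies the law of $\av$ directly: it writes the $\Ae$-martingale for $f(\ne)$ as the $\mathcal{L}$-martingale plus an error term $\int (f(\ne_u-1)-f(\ne_u))\ne_u B(\he_u-h^*_{\ne_u})\un_{\ne_u\ge2}\,du$ and kills that error by summing the relaxation bound over inter-jump intervals; only afterwards does it identify $\gamma_t$, and it does so \emph{not} by your direct argument but by showing that the rescaled predator martingale $\varepsilon M^{\varepsilon}_g$ converges to a continuous finite-variation martingale, hence is null, which forces $\gamma_t$ to charge only the zeros $\{0,h^*_{\av_t}\}$ of the vector field; the first-moment identity $\int y\,\gamma_t(dy)=h^*_{\av_t}$ (read off from the already-identified generator) then excludes the atom at $0$. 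Your route --- identify $\Gamma$ first from the relaxation estimate and then read off $\mathcal{L}$ --- avoids the fast-martingale/Riesz argument entirely and is arguably more transparent; the paper's route has the advantage that the support argument does not require $h^*_{\av_t}>0$ to be fed in by hand (it gets it from the moment identity), and it is the form in which Kurtz's Theorem 2.1 applies off the shelf. One point where your sketch understates the work: both for $\E\int_0^T|g(\he_s)-g(h^*_{\ne_s})|\,ds\to0$ and, more seriously, for the error term in the martingale (which carries an extra factor $\ne_u$ from the predation rate and a factor $|h^*_{\ne_{T_k}}-\he_{T_k}|\lesssim \he_0+\sup\ne$ from the relaxation bound), a uniformly bounded expectation of the number of jumps $J_T^{\varepsilon}$ is not enough; what is actually needed is $\sup_{\varepsilon}\E\bigl(\sup_{t\le T}(\ne_t)^2\,J_T^{\varepsilon}\bigr)<\infty$, which the paper obtains by a Gronwall argument from the trajectorial construction and is precisely where the fourth-moment hypothesis \eqref{hyp:moment2} is consumed. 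You flag this as ``the hard part'' and attribute it to the right hypothesis, so the gap is one of execution rather than of strategy, but the Gronwall control of the product $\sup(\ne)^2 J_T^{\varepsilon}$ should be stated and proved explicitly.
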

We say that $\av$ is an averaged process since it behaves as if the predator density is constant at its equilibrium.
Let us consider the specific case where $D=0$. In this case, the averaged prey population $\av$ evolves almost as a logistic birth and death process with individual birth rate $b$ and individual death rate $d+\widetilde{c}n$ where $\widetilde{c}=c+rB^2/C$. The logistic parameter $\widetilde{c}$ corresponds to the apparent competition pressure (see \cite{Amstrong80}): it takes into account both the prey competition $c$ and the effect of predation.
\begin{proof}
We divide the proof in several steps. The first three steps are devoted to the convergence of the prey population process. We use a standard compactness-identification method inspired from  Genadot \cite{genadot2014multi}. In the fourth step, we use the averaging method developed by Kurtz \cite{kurtz1992averaging} to prove the convergence of the predator occupation measures.
\medskip\\
\noindent\textsc{Step 1:} We prove that there exists an unique (in law) solution to the martingale problem associated to \eqref{gen-lim}: for every measurable and bounded function $f:\N^*\to\R$
\be
f(\av_t)-f(\av_0)-\int_0^t \mathcal{L}f(\av_s) ds
\ee
is a martingale.\\
It derives from the representation Theorem 3.2 in \cite{kurtz2011equivalence} and a localization argument (see Theorem 4.6.3 in \cite{Ethier&Kurtz}) that the uniqueness of the solution of this martingale problem is equivalent to the uniqueness in law of the solution $\av$ of the following stochastic differential equation provided that $\sup_{s\le t} \av_s<\infty$ a.s. for any $t\ge0$:
\bean
\label{sto}
\av_t=\av_0+\int_0^t\int_{R_+}& \un_{u\le b\av_{s-}}\\&-\un_{b\av_{s-}<u\le \av_{s-}(b+d+c\av_{s-}+Bh^*_{\av_{s-}})} Q(ds,du),
\eean
where $Q$ is a Poisson point measure on $(\R_+)^2$ with intensity the product of Lebesgue measure $ds du$.
The uniqueness of the weak solution of \eqref{sto} can be adapted from \cite{FM04}. Moreover, if $\E(\av_0)<+\infty$, then this solution is well defined on $\R_+$ and $\sup_{s\le t} \av_s<\infty$ a.s. for any $t\ge0$.
\medskip\\
\noindent\textsc{Step 2:} Tightness of $(\ne_t,0\le t\le T)$\\
Similarly to the proof of Theorem \ref{thm:1}, we construct a pure birth process $X$ independent from $\he_0$ and thus from $\varepsilon$, which dominates the prey population. Then we deduce from \eqref{hyp:moment2} that $\forall z\in E'$
\ben
\label{moment-eps}
\sup_{0<\varepsilon\le1}\E_z( \sup_{s\in[0,T]} (\ne_s)^4) \le \E_{z}( \sup_{s\in[0,T]} (X_s)^4)<+\infty.
\een
Let us now fix $\eta,\delta>0$ and consider stopping times $\sigma,\tau$ such that $\sigma\le\tau\le (\sigma+\delta)\wedge \tau$.
Using the trajectory's construction \eqref{eq:poisson}, we write
$$\ne_t=\ne_0+\int_0^t \ne_s(b-(d+c \ne_s +B\he_s)\un_{\ne_s\ge2}) ds+ M^{\varepsilon}_t,$$
where $M^{\varepsilon}_t$ is a pure jump martingale with quadratic variation
$$
\langle M^{\varepsilon}\rangle_t=\int_0^t \ne_s(b+(d+c \ne_s +B\he_s)\un_{\ne_s\ge2}) ds.
$$
Hence
\bea
\E\Bigl( \bigl(& \ne_{\tau}-\ne_{\sigma}\bigr)^2\Bigr)\\
&= \E\Bigl( \bigl(\int_{\sigma}^{\tau} \ne_s(b-(d+c \ne_s +B\he_s)\un_{\ne_s\ge2}) ds+ M^{\varepsilon}_{\tau}-M^{\varepsilon}_{\sigma} \bigr)^2  \Bigr)\\
&\le 2\Bigl[\E\Bigl( \bigl(\int_{\sigma}^{\tau} \ne_s(b+d+c \ne_s +B\he_s) ds\bigr)^2  \Bigr) + \E\Bigl( \bigl( M^{\varepsilon}_{\tau}-M^{\varepsilon}_{\sigma} \bigr)^2  \Bigr)\Bigr].
\eea
We deduce from \eqref{majoration-phi_n} that the first term is bounded above by
\bea
\E\Bigl( \bigl(\int_{\sigma}^{\tau} \ne_s(b+d+c \ne_s +B(\he_0+\frac{rB}{C}\sup_{u\in[0,s]  }\ne_u) ds\bigr)^2  \Bigr).
\eea
Therefore, using \eqref{hyp:moment2} and \eqref{moment-eps}, there exists a constant $C_T>0$ such that
\bea
\E\Bigl( \bigl(\int_{\sigma}^{\tau} \ne_s(b+d+c \ne_s +B\he_s) ds\bigr)^2  \Bigr)\le C_T \delta^2.
\eea
For the second term 
\bea
\E\Bigl( \bigl( M^{\varepsilon}_{\tau}-M^{\varepsilon}_{\sigma} \bigr)^2  \Bigr)&= \E\Bigl(\int_{\sigma}^{\tau} \ne_s(b+(d+c \ne_s +B\he_s)\un_{\ne_s\ge2}) ds \Bigr)\\
&\le \E\Bigl(\int_{\sigma}^{\tau} \ne_s(b+d+c \ne_s +B(\he_0+\frac{rB}{C}\sup_{u\in[0,s]  }\ne_u)) ds \Bigr),
\eea
where the last inequality derives from \eqref{majoration-phi_n}.
Combining this two inequalities \eqref{hyp:moment2} and \eqref{moment-eps}, we deduce that
\bea
\E\Bigl( \bigl( \ne_{\tau}-\ne_{\sigma}\bigr)^2\Bigr)
&\le \delta C_T,
\eea
for some constant $C_T>0$ independent of $\varepsilon$ which leads to the tightness of the laws of  $(\ne_t,t\in [0,T])$ in $\mathbb{D}([0,T],\N)$ using Aldous tightness criterion \cite{aldous1978stopping}. 
\bigskip\\
\noindent\textsc{Step 3:} Identification of the limit\\
Let us consider a sub-sequence of $(\ne_t,0\le t\le T)$ (still denoted $\ne$) which converges in law when $\varepsilon\to0$ toward a process {$(\av_t, 0\le t \le T)$}. In the sequel we prove that $\av$ is the unique solution of the martingale problem associated to \eqref{gen-lim}.\\
We consider bounded functions $f$, $g_1,\dots, g_k$ on $\N^*$ and times $0\le t_1<\dots<t_k<t<t+s\le T$. We deduce from Theorem \ref{thm:1} that
\bean
\label{esp-f-ne}
\E&\Bigl[\Bigl(f(\ne_{t+s})-f(\ne_t) -\int_{t}^{t+s} \mathcal{L}f(\ne_u)du\Bigr) \prod_{i=1}^k g_i(\ne_{t_i}) \Bigr]\\
+&\E\Bigl[\Bigl(\int_{t}^{t+s}(f(\ne_u-1)-f(\ne_u)) \ne_uB(\he_u-h^*_{\ne_u})\un_{\ne_u\ge2} du\Bigr)\prod_{i=1}^k g_i(\ne_{t_i})\Bigr] =0.
\eean
From the convergence in law of $\ne$ to $\av$ and \eqref{moment-eps}, the first term converges as $\varepsilon\to 0$ to
\be
\E\Bigl[\Bigl(f(\av_{t+s})-f(\av_t) -\int_{t}^{t+s} \mathcal{L}f(\av_u)du\Bigr) \prod_{i=1}^k g_i(\av_{t_i}) \Bigr].
\ee
Let us prove that
\ben
\label{cv1}\lim_{\varepsilon\to0} \E\Bigl[\Bigl(\int_{t}^{t+s}(f(\ne_u-1)-f(\ne_u)) \ne_uB(\he_u-h^*_{\ne_u})\un_{\ne_u\ge2} du\Bigr)\prod_{i=1}^k g_i(\ne_{t_i})\Bigr]=0.
\een
We first remark that 
\bea\E\Bigl[\Bigl(&\int_{t}^{t+s}\bigl(f(\ne_u-1)-f(\ne_u)\bigr) \ne_uB(\he_u-h^*_{\ne_u})\un_{\ne_u\ge2} du\Bigr)\prod_{i=1}^k g_i(\ne_{t_i})\Bigr]\\
&\le \prod_{i=1}^k\norm{g_i}_{\infty} \E\Bigl[\int_{t}^{t+s}\lvert f(\ne_u-1)-f(\ne_u)\lvert\ne_uB \lvert\he_u-h^*_{\ne_u}-\lvert \un_{\ne_u\ge2} du\Bigr]
\eea
We split the integral according to the sequence of jump times $(T^{\varepsilon}_k,k\in\N)$ of the prey population:
\bea
\Bigl|\E\Bigl(\int_{t}^{t+s}& \lvert f(\ne_u-1)-f(\ne_u)\lvert \ne_uB |\he_u-h^*_{\ne_u}| \un_{\ne_u\ge2} du\Bigr)\Bigr|\\
&\le 2\norm{f}_{\infty} B \sum_{k=0}^{\infty} \sum_{n=1}^{\infty} \E\Bigl(\un_{n}(\ne_{T^{\varepsilon}_k}) n \int_{T^{\varepsilon}_k}^{T^{\varepsilon}_{k+1}\wedge (t +s)}|h^*_{\ne_u}-\he_u| du\Bigr).
\eea
Since on the event $\{\ne_{T^{\varepsilon}_k}=n\}$, for any $u\in [T^{\varepsilon}_{k}, T^{\varepsilon}_{k+1}\wedge (t+s)]$, the predator density is given by $\he_u=\phi_n^{\varepsilon}(\he_{T^{\varepsilon}_k}, u-T^{\varepsilon}_k)$, we derive from Proposition \ref{prop:unif-conv} that
\be
|h^*_{\ne_u}-\he_u|\le |h^*_{\ne_u}-\he_{T^{\varepsilon}_u}|e^{-\frac{\delta(u-T^{\varepsilon}_k)}{\varepsilon}}.
\ee
Hence
\bea
\Bigl|\E\Bigl(&\int_{t}^{t+s}(f(\ne_u-1)-f(\ne_u)) \ne_uB(\he_u-h^*_{\ne_u})\un_{\ne_u\ge2} du\Bigr)\Bigr|\\
&\le 2\norm{f}_{\infty} B \sum_{k=0}^{\infty} \sum_{n=1}^{\infty} \E\Bigl(\un_{n}(\ne_{T^{\varepsilon}_k})  n \int_{T^{\varepsilon}_k}^{T^{\varepsilon}_{k+1}\wedge (t +s)} |h^*_{n}-\he_{T^{\varepsilon}_k}| {e^{- \frac{\delta(u-T^{\varepsilon}_k)}{\varepsilon}  }} du\Bigr).
\eea
From \eqref{majoration-phi_n}, we obtain that for every $T>0$
\ben
\label{maj-etoile}
\sup_{u\in[0,T]} \he_u\le H_++\frac{rB}{C} \sup_{u\in [0,T]} \ne_u <+\infty.
\een
Since $h^*_n\le rBn/C$, we deduce that for all $u\in[T^{\varepsilon}_k,T^{\varepsilon}_{k+1}\wedge (t+s)]$
\be
|h^*_{\ne_{T^{\varepsilon}_k}}-\he_{T^{\varepsilon}_k}|\le H_+ +2\frac{rB}{C}\sup_{u\in[0,T]} \ne_u.
\ee
With a change of variable $u\to u-T^{\varepsilon}_k$, we obtain that
\bea
\int_{T^{\varepsilon}_k}^{T^{\varepsilon}_{k+1}\wedge (t+s) }{e^{- \frac{\delta(u-T^{\varepsilon}_k)}{\varepsilon}  }} du
&= \int_0^{T^{\varepsilon}_{k+1}\wedge (t +s) -T^{\varepsilon}_k} \exp(-\frac{\delta u}{\varepsilon})du\\
& \le \int_{0}^{t+s }\exp(-\frac{\delta u}{\varepsilon})du\un_{T^{\varepsilon}_k\le t+s}\\
&\le \frac{\varepsilon}{\delta}\un_{T^{\varepsilon}_k\le t+s}.
\eea
Combining these two inequalities,  we have that
\bea
\Bigl|\E\Bigl(\int_{t}^{t+s}&(f(\ne_u-1)-f(\ne_u)) \ne_uB(\he_u-h^*_{\ne_u})\un_{\ne_u\ge2} du\Bigr)\Bigr|\\
&\le 2\norm{f}_{\infty} B\quad \E\Bigl(\bigl(H_+ +2\frac{rB}{C}\sup_{u\in[0,T]} \ne_u \bigr)\sup_{u\in[0,T]} \ne_u \sum_{k=1}^{\infty}\un_{T_k^{\varepsilon}\le t+s}\Bigr)
\frac{\varepsilon}{\delta}.
\eea
It remains to bound the expectation independently of $\varepsilon$. To this aim, we denote by $J_t^{\varepsilon}$ the number of jumps before time $t$. 
By neglecting the non-positive terms, we deduce from the trajectorial construction \eqref{eq:poisson} that
\bea
\sup_{t\in[0,T]}\ne_t J_T^{\varepsilon} &\le \int_0^T\int_{\R_+} \bigl( \ne_{s-}+J_{s-}^{\varepsilon}+1\bigr) \un_{u\le b\ne_{s-} }Q_1(ds,du)\\
&+\int_0^T\int_{\R_+} \bigl( \ne_{s_-}\bigr) \un_{u\le \ne_{s-}(d+c\ne_{s-}+B\he_{s-})} Q_2(ds,du).
\eea
Then, we choose $S\ge T$ and taking expectations we deduce from \eqref{hyp:moment2} and \eqref{moment-eps} that there exists a positive constant $C_S$ such that
\bea
\E\Bigl(\sup_{t\in[0,T]}\ne_t J_T^{\varepsilon}\Bigr) &\le C_S+\E\Bigl( \int_0^T b J_{t}^{\varepsilon}\sup_{s\in[0,t]}\ne_{s} dt\Bigr).
\eea 
We conclude using Gronwall lemma that $\E\Bigl(\sup_{t\in[0,T]}\ne_t J_T^{\varepsilon}\Bigr) <\infty$.
A very similar computation leads to 
\bea
\E\Bigl(\sup_{t\in[0,T]}(\ne_t)^2 J_T^{\varepsilon}\Bigr)\le \E\Bigl(\int_0^T& 2b(\ne_s)^2J_s^{\varepsilon} +b\ne_sJ_s^{\varepsilon}+b\ne_s(1+\ne_s)\\
& + (\ne_s-1)^2\ne_s(d+c\ne_s+B\he_s) ds\Bigr) .
\eea
We conclude with \eqref{hyp:moment2}, \eqref{moment-eps} and Gronwall lemma that 
$$\E\Bigl(\sup_{t\in[0,T]}(\ne_t)^2 J_T^{\varepsilon}\Bigr) <\infty.$$
Therefore, we deduce that there exists a constant $C_{S,T}>0$ such that 
\be
\Bigl|\E\Bigl(\int_{t}^{t+s}(f(\ne_u-1)-f(\ne_u)) \ne_uB(\he_u-h^*_{\ne_u})\un_{\ne_u\ge2} du\Bigr)\Bigr|\le C_{S,T} \frac{\varepsilon}{\delta}.
\ee
Thus, \eqref{cv1} is verified.\\
Therefore
\bea
\E\Bigl[\Bigl(&f(\av_{t+s})-f(\av_t) -\int_{t}^{t+s} \mathcal{L}f(\av_u)du\Bigr) \prod_{i=1}^k g_i(\av_{t_i}) \Bigr]=0,
\eea
and thus $\av$ is a solution of the martingale problem associated with $\mathcal{L}$.
From \eqref{hyp:moment2} we deduce that $\E(\av_0)<\infty$ and then from the first step, $\ne$ converge in law to the unique solution of the martingale problem associated with $\mathcal{L}$ which is a birth and death process which jumps from $n\to n+1$ at rate $bn$ and from $n\to n-1$ at rate $n(d+c n +Bh^*_{n})\un_{n\ge2}$.
\bigskip\\
\noindent\textsc{Step 4:} Limit behavior of the predator population size\\
We prove the weak convergence of the sequence of occupation measures $\Gamma^{\varepsilon}$ introduced in \eqref{occupation}.
From, Lemma 1.3 in \cite{kurtz1992averaging}, the sequence of the laws of $\Gamma^{\varepsilon}$ is tight in the set $\mathcal{M}_m^T(\R_+)$ if 
for any $\delta>0$ and $t\in[0,T]$ there exists a compact $K\subset \R_+$ such that
\be
\inf_{\varepsilon}\E\bigl(\Gamma^{\varepsilon}([0,t]\times K)\bigr) \ge (1-\delta) t.
\ee
From \eqref{maj-etoile}, we deduce that the family
$\{\he_s,\varepsilon\in(0,1],s\in[0,T]\}$ is relatively compact, and thus the sequence of the laws of $(\Gamma^{\varepsilon})_{\varepsilon\in(0,1]}$ is tight.\\
Hence, the pair $(\ne,\Gamma^{\varepsilon})$ is tight and we consider a sub-sequence, still denoted $(\ne,\Gamma^{\varepsilon})$, that converges toward $(\av,\Gamma)$.

Since $\Gamma^{\varepsilon}([0,t],\R^+)=t$, for all $\varepsilon\in(0,1)$ and $t\in[0,T]$, we deduce (see Lemma 1.4 in \cite{kurtz1992averaging}) that there exists a process $\gamma_s$ taking values in the set of probability measures on $\R^+$, measurable with respect to $(\omega,s)$, such that for every measurable and bounded function $h$ on $[0,T]\times \R_+$
\be
\int_0^T\int_{\R_+}h(s,y)\Gamma(ds\times dy)=\int_0^T\int_{\R_+}h(s,y)\gamma_s(dy) ds.
\ee
Using \eqref{moment-eps}, we deduce from Theorem 2.1  in \cite{kurtz1992averaging} that for every function $f$ on $\N^*$ continuous and bounded and $t\le T$
\bean 
\label{eq1}
f(\av_t)- &f(\av_0)-\int_0^t{\int_{R_+}} \Bigl[(f(\av_s+1)-f(\av_s))b\av_s\\
&+ (f(\av_s-1)-f(\av_s))\av_s (d+c\av_s +B y)\un_{\av_s\ge2}\Bigr] \gamma_s(dy) ds.
\eean
is a martingale.\\
Concerning the predator dynamics, for every function $g\in \mathcal{C}^1_b(R_+)$, the process
$$\varepsilon M^{\varepsilon}_g(t)=\varepsilon g(\he_t)-\varepsilon g(\he_0)-\int_0^t g'(\he_s) \he_s\bigl(rB\ne_s-D-C\he_s\bigr)ds;
$$
is a martingale. Using \eqref{moment-eps}, we prove easily that $\bigl(\varepsilon M^{\varepsilon}_g(t)\bigr)_{0<\varepsilon\le 1,t\in[0,T]}$ is uniformly integrable and that
$$\lim_{\varepsilon\to0} \E( |\varepsilon M^{\varepsilon}_g(t)-\widetilde{M}_g(t)|)=0,$$
where $\widetilde{M}_g(t)=\int_0^t \int_{\R_+}g'(y) y\bigl(rB\av_s-D-Cy\bigr)\gamma_s(dy)ds$. Therefore we deduce from the uniform integrability of $\varepsilon M^{\varepsilon}_g$ that $\widetilde{M}_g(t)$ is a martingale. Since it is also a continuous and finite variation process, it must be null. Thus 
\ben 
\label{eq2}
\int_{\R^+} g'(y) y(rB\av_t-D-Cy)  \gamma_t(dy)=0,\quad \quad \text{for } dt-\text{almost every }t\in[0,T].
\een

We recall that the infinitesimal generator of $\av$ is given by \eqref{gen-lim}, then by identification in \eqref{eq1} we deduce that
for all $t\in[0,T]$, 
\ben
\label{fin}
\int_{\R^+} y \gamma_t(dy) = h^*_{\av_t}.
\een
{Then we apply \eqref{eq2} to the function $g(y)=\int_y^{\infty} f(u)du$,} where $f$ is continuous on a compact of $\R_+$, 
\be
\int_{\R^+} f(y)y(rB\av_t-D-Cy)\gamma_t(dy)=0, \quad \text{for } dt-\text{almost every }t\in[0,T].
\ee
From Riesz Theorem, the measure $\mu(\cdot)=\int_\cdot y(rB\av_t-D-Cy)\gamma_t(dy)$ on $\R_+$ is null for almost every $t\in[0,T]$.
Then, for almost every $t\in[0,T]$, $\gamma_t$ only charges $0$ and $h^*_{\av_t}$. Finally using \eqref{fin} we conclude that $\gamma_t(dy)=\delta_{h^*_{\av_t}}(dy)$ for $dt-$ almost every $t\in[0,T]$.

\end{proof}

\begin{rem}The proof of Theorem \ref{thm:cv-moy} relies on the uniform convergence of the flows $\phi_n(h,t)$ to their equilibrium $h^*_n$ as $t\to\infty$, $\forall (n,h)\in E'$. It is not possible to extend this reasoning to the process $(N^{(m)},H^{(m)})$ including migration introduced in Remark \ref{rem_migr}, since its state space is $\N\times\R_+$. However, we can still prove using the same method that the accelerated sequence $(N^{(m),\varepsilon},\Gamma^{(m),\varepsilon})$ is tight (with obvious notations). The difficulty to extend Theorem \ref{thm:cv-moy} to the migration case lies in the identification of the limiting values.
\end{rem}
\subsection{Long time behavior of the averaged process}
We are interested in the long time behavior of the averaged prey population $\av$.
\begin{prop}
\label{prop:pib}
The process $\av$ is positive recurrent on ${\N^*}$ 
and converges toward its unique invariant probability measure $\widebar{\mu}=\sum_{n=1}^{\infty} \mu_n \delta_{n}$ which satisfies the system
\bean
\label{system}
\forall n\ge2,\quad &\mu_n =\frac{b^{n-1}}{n\quad \Pi_{i=2}^n (d+ci+Bh^*_i)} \mu_1\\
\eean
 \end{prop}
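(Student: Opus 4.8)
The plan is to use that $\av$ is a one-dimensional birth-death process on $\N^*$, for which both the invariant equations and the recurrence classification are explicit. It jumps from $n$ to $n+1$ at rate $\lambda_n=bn$ and from $n$ to $n-1$ at rate $\mu_n=n(d+cn+Bh^*_n)\un_{n\ge2}$. Since $\lambda_n>0$ for every $n\ge1$ and $\mu_n>0$ for every $n\ge2$, the chain is irreducible on $\N^*$. It is moreover non-explosive: this was already observed in Step~1 of the proof of Theorem \ref{thm:cv-moy}, where the solution of \eqref{sto} is shown to satisfy $\sup_{s\le t}\av_s<\infty$ almost surely whenever $\E(\av_0)<\infty$. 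These two facts put the process into the framework of Section \ref{subsec:rappel}, with the counting measure on $\N^*$ as irreducibility measure (its support is the whole discrete space, hence has nonempty interior).

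First I would identify the invariant measure. Every birth-death process is reversible, so a measure $\widebar{\mu}=\sum_n\mu_n\delta_n$ is invariant if and only if it satisfies the detailed-balance (flux) relation across each edge $\{n,n+1\}$, namely $\mu_n\,bn=\mu_{n+1}(n+1)(d+c(n+1)+Bh^*_{n+1})$, that is
\be
\mu_{n+1}=\mu_n\,\frac{bn}{(n+1)(d+c(n+1)+Bh^*_{n+1})},\qquad n\ge1.
\ee
Solving this recursion from $\mu_1$ by induction and simplifying the resulting product — the factors $j+1$ telescoping into the factorial coming from the death rates, while the factors $j$ produce $b^{n-1}(n-1)!$ — yields exactly the announced formula \eqref{system}.

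Next I would check summability, which is the one genuinely quantitative point. Since $h^*_i\ge0$ and $d\ge0$ one has $d+ci+Bh^*_i\ge ci$, hence $\prod_{i=2}^n(d+ci+Bh^*_i)\ge c^{n-1}n!$, and therefore
\be
\mu_n\le\frac{(b/c)^{n-1}}{n\,n!}\,\mu_1.
\ee
The series $\sum_{n\ge1}(b/c)^{n-1}/(n\,n!)$ converges by comparison with the exponential series $\sum (b/c)^n/n!$, so $\sum_n\mu_n<\infty$ and one fixes $\mu_1$ to normalize $\widebar{\mu}$ into a probability measure. An irreducible non-explosive continuous-time Markov chain on the countable space $\N^*$ admitting a summable solution of the balance equations is positive recurrent, with that solution as unique stationary distribution, and irreducibility together with positive recurrence then gives convergence of the law of $\av_t$ toward $\widebar{\mu}$ (no periodicity obstruction arises in continuous time). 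Alternatively, one may invoke the Foster-Lyapunov machinery of Section \ref{sec:ergodicity} directly: taking $V(n)=n$ gives $\mathcal{L}V(n)=n(b-d-cn-Bh^*_n)\un_{n\ge2}+b\un_{n=1}$, which is $\le-\gamma V(n)+\delta\un_K(n)$ for a finite set $K$ because $cn^2\to\infty$, so Theorem \ref{thm:histo1} yields positive Harris recurrence and a unique invariant probability measure, necessarily equal to the explicit $\widebar{\mu}$ by uniqueness.

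The main subtlety is not the algebra but the order of the argument: a summable solution of the balance equations is the true stationary distribution of a \emph{well-defined} process only once explosion has been excluded, which is precisely what the a priori bound $\sup_{s\le t}\av_s<\infty$ from Step~1 of Theorem \ref{thm:cv-moy} (or, equivalently, the Lyapunov inequality above) provides.
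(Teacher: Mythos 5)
Your proof is correct and is essentially the classical birth--death argument (detailed balance, summability via $d+ci+Bh^*_i\ge ci$, non-explosion, and the standard positive-recurrence/convergence theory for irreducible continuous-time chains on $\N^*$) that the paper itself omits and delegates to the literature. The recursion does telescope to the stated formula, and your extra care about excluding explosion before identifying the normalized balance solution as the stationary law is a sound, if standard, precaution.
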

The proof is very classical and left to the reader (see for example \cite{allen2010introduction}, p.216). Moreover, we are interested in the process $\widebar{Z}=(\av,h^*_{\av})$ which represents the averaged prey and predator populations. We obtain immediately the form of its invariant distribution since $h^*_{\av}$ is a function of $\av$.
\begin{cor}
\label{cor:unik}
The process $\widebar{Z}=(\av,h^*_{\av})$ admits a unique invariant probability measure $\pib$ given by
\ben
\label{pib}
\pib=\sum_{n\ge1}\mu_n\delta_{(n,h^*_n)},
\een
where the sequence $\mu_n$ satisfies \eqref{system}
\end{cor}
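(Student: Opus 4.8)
The plan is to use that $\widebar{Z}=(\av,h^*_{\av})$ is nothing but the one-dimensional process $\av$ read through the deterministic relabeling $\Psi:\N^*\to\N^*\times\R_+$, $\Psi(n)=(n,h^*_n)$. Since the second coordinate is at every time a deterministic function of the first, we have $\widebar{Z}_t=\Psi(\av_t)$ for all $t\ge0$, and $\Psi$ is injective (its first coordinate is already $n$). All of the ergodic information therefore transfers from $\av$, whose long-time behavior is given by Proposition \ref{prop:pib}, to $\widebar{Z}$ via the pushforward under $\Psi$.

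First I would check that $\pib=\Psi_*\widebar{\mu}=\sum_{n\ge1}\mu_n\delta_{(n,h^*_n)}$ is invariant for $\widebar{Z}$. Writing $\widebar{P}_t$ and $P_t^{\av}$ for the semigroups of $\widebar{Z}$ and $\av$ respectively, the identity $\widebar{Z}_t=\Psi(\av_t)$ gives, for any bounded measurable $g$ on $\N^*\times\R_+$, that $g(\widebar{Z}_t)=(g\circ\Psi)(\av_t)$ and hence $\widebar{P}_t g(\Psi(n))=P_t^{\av}(g\circ\Psi)(n)$. Consequently
\bea
\int \widebar{P}_t g\, d\pib &= \sum_{n\ge1}\mu_n\,\widebar{P}_t g(\Psi(n)) = \sum_{n\ge1}\mu_n\,P_t^{\av}(g\circ\Psi)(n)\\
&=\int P_t^{\av}(g\circ\Psi)\, d\widebar{\mu}=\int (g\circ\Psi)\, d\widebar{\mu}=\int g\, d\pib,
\eea
where the penultimate equality is the invariance of $\widebar{\mu}$ for $\av$ from Proposition \ref{prop:pib}. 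This produces the explicit form \eqref{pib}.

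For uniqueness I would argue that any invariant probability measure $\nu$ of $\widebar{Z}$ is carried by the graph $G=\{(n,h^*_n):n\in\N^*\}$, since $\widebar{Z}_t\in G$ almost surely for every $t$. Its first-coordinate marginal $\nu_1$ is then invariant for $\av$, because the first coordinate of $\widebar{Z}$ evolves exactly according to the generator $\mathcal{L}$ of \eqref{gen-lim}; the uniqueness part of Proposition \ref{prop:pib} forces $\nu_1=\widebar{\mu}$. Finally, a measure supported on the graph $G$ is entirely determined by its first marginal, since $\Psi$ is injective, whence $\nu=\Psi_*\widebar{\mu}=\pib$.

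I do not expect a genuine obstacle here: the statement is essentially a change of variables, and the authors rightly call it immediate. The only point deserving a line of justification is that the first marginal of an invariant measure of $\widebar{Z}$ is again invariant for $\av$, which follows at once from $\widebar{Z}_t=\Psi(\av_t)$ together with the injectivity of $\Psi$, so that nothing beyond Proposition \ref{prop:pib} is really needed.
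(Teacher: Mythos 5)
Your proposal is correct and follows exactly the route the paper takes: the paper treats the corollary as immediate precisely because $h^*_{\av}$ is a deterministic function of $\av$, so the invariant law of $\widebar{Z}$ is the pushforward of $\widebar{\mu}$ from Proposition \ref{prop:pib} under $n\mapsto(n,h^*_n)$. You merely write out the pushforward computation and the uniqueness step that the paper leaves implicit.
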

The expression of $\pib$ is not explicit, however thanks to \eqref{system} we can derive information on the shape of the distribution.
In the sequel we denote by $\lceil x\rceil=\min\{m\in\mathbb{Z}; m\ge x\}$ for $x\in \R$.
\begin{prop}
The invariant probability measure $\widebar{\pi}$ admits a unique maximum at $1$ if $\mu_1\ge \mu_2$ and at $n_1>1$ otherwise.
The value of $n_1$ can be explicitly computed in function of the model parameters.
\end{prop}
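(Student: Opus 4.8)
The plan is to analyze the weights $(\mu_n)_{n\ge1}$ of Proposition~\ref{prop:pib} through the ratio of consecutive terms. Under the standing assumption $rB-D>0$ one has $h^*_n=(rBn-D)/C$ for every $n\ge1$, so the per-capita death rate collapses to the affine expression $d+cn+Bh^*_n=\widetilde{c}\,n+\widetilde{d}$, with $\widetilde{c}=c+rB^2/C>0$ and $\widetilde{d}=d-BD/C$. From \eqref{system} the ratio is
\ben
\rho_n:=\frac{\mu_{n+1}}{\mu_n}=\frac{bn}{(n+1)\bigl(\widetilde{c}(n+1)+\widetilde{d}\bigr)},\qquad n\ge1.
\een
Since $\mu_{n+1}\gtrless\mu_n$ according to whether $\rho_n\gtrless 1$, the location of the maximum of $\widebar{\pi}$ (equivalently, of the mode of $\widebar{\mu}$) is governed by the crossing of $\rho_n$ through the value $1$, and the hypothesis $\mu_1\ge\mu_2$ is exactly $\rho_1\le1$.

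First I would pin down the candidate mode explicitly. Writing $m=n+1$, the equation $\rho_n=1$ becomes the quadratic $\widetilde{c}\,m^2+(\widetilde{d}-b)\,m+b=0$, whose roots are
\be
m_\pm=\frac{(b-\widetilde{d})\pm\sqrt{(b-\widetilde{d})^2-4b\widetilde{c}}}{2\widetilde{c}}.
\ee
The first integer at which $\rho_n$ falls to or below $1$ is then $n_1=\lceil m_+-1\rceil$ (with the convention that $n_1=1$ when no admissible root exceeds $1$). This is the announced value, an explicit function of $b,c,d,r,B,C,D$, and it is the location of the first local maximum of $(\mu_n)$: on $\{1,\dots,n_1-1\}$ one has $\rho_n>1$ so $\mu_n$ increases, while at $n_1$ it stops increasing.

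The main obstacle is to promote this first local maximum to a \emph{unique global} maximum, i.e. to prove that $\widebar{\mu}$ is unimodal; it is here that the dichotomy of the statement must be justified. I would study the smooth extension $x\mapsto\rho(x)$ of $\rho_n$: a direct computation shows that $\rho'(x)$ has the sign of $-\widetilde{c}\,x^2+(\widetilde{c}+\widetilde{d})$, so $\rho$ is increasing then decreasing, with a single peak at $x^\star=\sqrt{1+\widetilde{d}/\widetilde{c}}$. When $\widetilde{d}\le0$, i.e. $Cd\le BD$, one has $x^\star\le1$, hence $\rho_n$ is strictly decreasing on $\{n\ge1\}$; it therefore crosses $1$ at most once, $\{n:\rho_n\ge1\}$ is an initial segment, and $(\mu_n)$ is genuinely unimodal, so the claim holds verbatim: the mode is at $1$ precisely when $\rho_1\le1$, and at $n_1>1$ otherwise. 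The delicate regime is $\widetilde{d}>0$, where $x^\star>1$ and $\rho_n$ first increases before decreasing. There the super-level set $\{n:\rho_n\ge1\}$ is still an interval (by unimodality of $\rho$) but need not start at $n=1$, so $\mu_1\ge\mu_2$ no longer forces $(\mu_n)$ to be monotone and a secondary local maximum can appear beyond $n_1$. I expect this to be the crux: one must control the relative heights $\mu_1$ versus $\max_{n}\mu_n$, either by an additional condition ensuring $\rho_1=\max_n\rho_n$ (which propagates $\rho_1\le1$ to all $n$), or by arguing that "maximum" is to be read as the first local maximum located at $n_1$. Absent such a refinement, the global statement requires care precisely in the case $Cd>BD$.
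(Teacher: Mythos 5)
Your route is essentially the paper's route --- locate the mode by tracking the sign of $\mu_{n+1}/\mu_n-1$ through an explicit quadratic --- but your ratio is the correct one and the paper's is not. From \eqref{system}, equivalently from detailed balance $\mu_n\, bn=\mu_{n+1}(n+1)\bigl(d+c(n+1)+Bh^*_{n+1}\bigr)$, one gets exactly your $\rho_n=\frac{bn}{(n+1)(\widetilde{c}(n+1)+\widetilde{d})}$ with $\widetilde{c}=c+rB^2/C$, $\widetilde{d}=d-BD/C$. The paper instead studies $\xi(x)=\frac{b}{(x+1)(\widetilde{c}(x+1)+\widetilde{d})}-1$, which drops the factor $x$ in the numerator coming from the birth rate $bn$. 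That dropped factor is precisely what makes the paper's argument close: its convex quadratic $\alpha x^2+\beta x+\gamma$ always has one negative root, so $\{n:\mu_{n+1}>\mu_n\}$ is automatically an initial segment of $\N^*$ and unimodality is immediate. With the correct ratio, the relevant quadratic in $m=n+1$ is $\widetilde{c}m^2+(\widetilde{d}-b)m+b$, whose roots have product $b/\widetilde{c}>0$ and hence the same sign, so the super-level set $\{n:\rho_n\ge1\}$ is an interval that need not contain $n=1$. Your observation that this can only happen when $\widetilde{d}>0$ (via the monotonicity analysis of $x\mapsto\rho(x)$, whose peak is at $x^\star=\sqrt{1+\widetilde{d}/\widetilde{c}}$) is correct.

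The obstruction you flag in the regime $Cd>BD$ is therefore not a defect of your proof so much as of the statement itself: admissible parameters exist (e.g. $b=15$, $d=10$, $c=0.005$, $B=0.02$, $C=0.04$, $r=0.5$, $D=0$, so $\widetilde{c}=0.01$, $\widetilde{d}=10$) for which $\rho_1\approx0.75<1$ and $\rho_2<1$ but $\rho_n>1$ for $3\le n\le 495$; then $\mu_1>\mu_2$ yet the global maximum of $\widebar{\mu}$ sits near $n=496$, and the announced dichotomy fails. So no refinement of your argument can recover the Proposition in full generality, and you were right not to force one. In the complementary regime $Cd\le BD$ --- in particular whenever $d=0$, which covers the paper's numerical section --- your argument is complete and correct: $\rho$ is nonincreasing on $[1,\infty)$, $(\mu_n)_{n\ge1}$ is unimodal, and the mode is $1$ or $\lceil m_+-1\rceil$ according to the sign of $\rho_1-1$. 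The clean repair is either to add the hypothesis $Cd\le BD$, or to locate the mode directly at $\lceil m_+\rceil-1$ whenever $m_+\ge2$ exists, without tying the dichotomy to the comparison of $\mu_1$ and $\mu_2$.
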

\begin{proof}
The study of the existence of maxima of the distribution $\pib$ and thus $\widebar{\mu}$ is equivalent to we study of $\frac{\mu_{n+1}}{\mu_n}-1=\xi(n)$ for $\xi$ the function defined by
$$
\xi(x)=  \frac{b}{(x+1)\bigl(d+c(x+1)+B(rB(x+1)-D)/C\bigr)}-1, \quad \forall x\in[1,+\infty).$$
The sign of $\xi(x)$ is given by the sign of the polynomial
\ben
\label{poly}
\alpha x^2+\beta x+\gamma=0
\een
with
\bea
\alpha&= c+\frac{rB^2}{C}\\
\beta &=d+2c+2\frac{rB^2}{C}-\frac{BD}{C}\\
\gamma &=-b+d+c+\frac{rB^2}{C}-\frac{BD}{C}.
\eea
Its discriminant equals
$$
\beta^2-4\alpha\gamma = (d-\frac{BD}{C})^2+4(c+\frac{rB^2}{C})b,
$$ which is always positive.
Therefore the polynomial \eqref{poly} admits 2 real roots:
$$x_0=\frac{-\beta -\sqrt{(\beta^2-4\alpha\gamma)}}{2\alpha}\quad\text{and}\quad x_1=\frac{-\beta +\sqrt{(\beta^2-4\alpha\gamma)}}{2\alpha}.$$
The smallest root $x_0$ is always negative.
If $x_1>1$ then the invariant distribution $\widebar{\mu}$ admits exactly one mode at $n_1=\lceil x_1 \rceil$. Otherwise, the sequence $(\widebar{\mu}_n)_{n\ge1}$ is decreasing. 
To conclude the proof it remains to remark that the condition $x_1>1$ is equivalent to $\xi(1)>0$ which is the condition given in the proposition.
\end{proof}

\subsubsection{Numerics}
For each $\varepsilon\in(0,1]$ we proved in Section \ref{sec:ergodicity} that there exists a unique invariant probability measure $\pi^{\varepsilon}$ for the process $\Ze$. 
In this section, we study with numerical simulations the behavior of the sequence of invariant probability measures $(\pie)_{\varepsilon}$ as $\varepsilon\to0$.
An approximation of the invariant measure $\pi^{\varepsilon}$ is obtained by simulating 3000 times the prey-predator process $\Ze$ on a long time interval. One interest of the process $\Ze$, is that the flow for the fast predator population admits an explicit formula \eqref{phi_n}. Therefore, we are able to compute exact simulations of the process $\Ze$ for every $\varepsilon \in (0,1]$. The code for these simulations is available in \cite{costa:tel-01235792} (Chapter 3, Appendix A).\\
In the simulations, the demographic parameters are given by
\be
b=0.4, \quad d=0,\quad c=0.005, \quad B= 0.02, \quad r=2,\quad D=0\text{ and }\quad C=0.04.
\ee
In Figure \ref{fig:histo}, we draw three-dimensional histograms of the distributions of the invariant measure $\pi^{\varepsilon}$ for different values of $\varepsilon$ (in \subref{fig:histo1} $\varepsilon =1$, in \subref{fig:histo2}  $\varepsilon=0.1$ and in \subref{fig:histo3} $\varepsilon=0.00001$).
We observe that the support of these measures concentrates as $\varepsilon\to0$ on the set $\{(n,h^*_n),n\in\N^*\}$. This set corresponds to the support of the stationary distribution $\pib$.
\medskip\\
 \begin{figure}[h!]
 \begin{center}
 \begin{minipage}{0.3\linewidth}
  \subfloat[][\hspace{0.1cm}$\varepsilon=1$]{\includegraphics[scale=0.35]{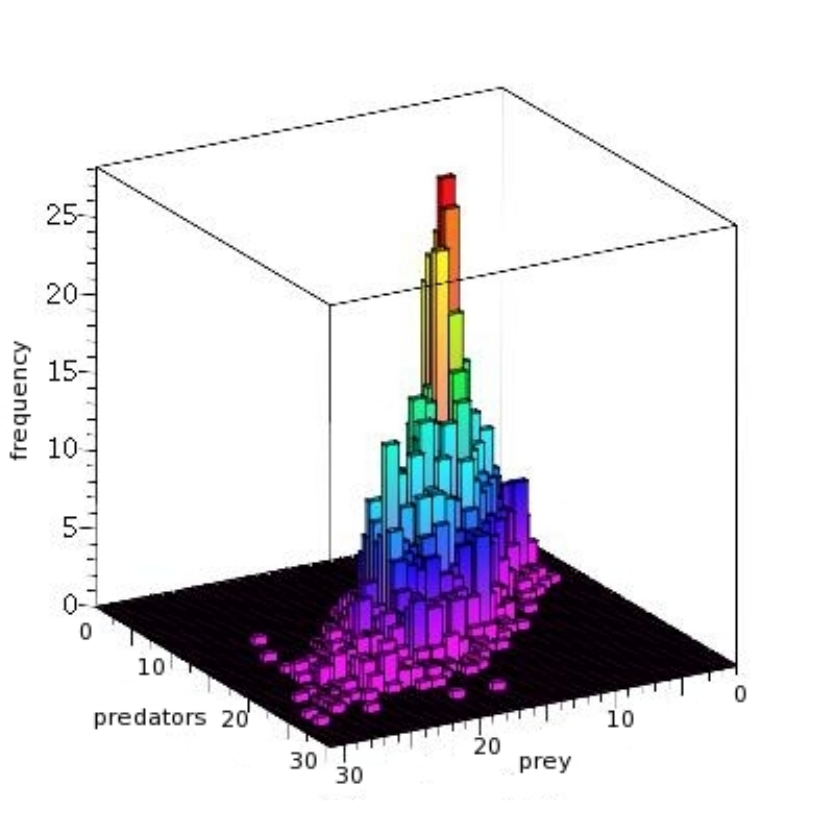}
 \label{fig:histo1}}
 \end{minipage}
 \hfill
 \begin{minipage}{0.3\linewidth}
  \subfloat[][\hspace{0.1cm} $\varepsilon=0.01$]{\includegraphics[scale=0.35]{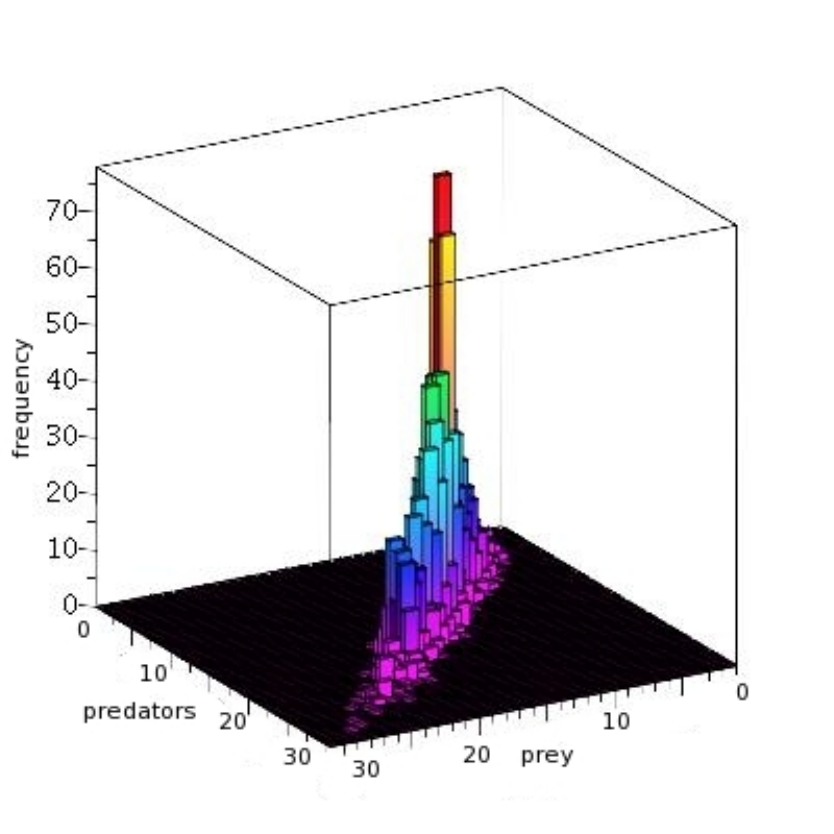}
 \label{fig:histo2}}
 \end{minipage}
 \hfill
 \begin{minipage}{0.3\linewidth}
  \subfloat[][\hspace{0.1cm}$\varepsilon=0.00001$]{\includegraphics[scale=0.35]{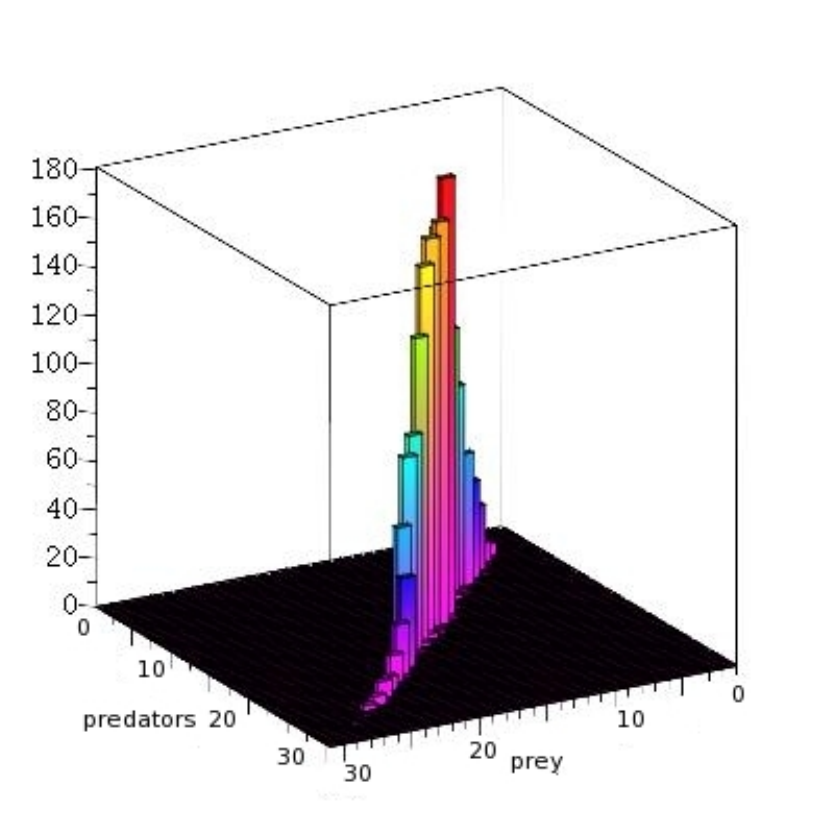}
 \label{fig:histo3}}
 \end{minipage}
 \caption[Approximation of the invariant measure $\pie$ for different values of $\varepsilon$]{\small{Approximation of the invariant measure $\pie$ for different values of $\varepsilon$.
  These histograms are built from 3000 iterations of the community process $\Ze$ until time $1000$. The parameters are $b=0.4$, $d=0$,  $c=0.005$, $B= 0.02$, $r=2$, $D=0$ and  $C=0.04.$}}
 \label{fig:histo}
 \end{center}
 \end{figure} 
\noindent We now compare these measures $\pie$ with the measure $\pib$. With the parameters of the simulations, the averaged invariant measure $\widebar{\pi}=\sum_{n=1}^{\infty} \mu_n \delta_{(n,h^*_n)}$ satisfies
\bea
& \mu_n =\frac{b^{n-1} }{(\widetilde{c})^{n-1} n(n!) }\mu_1, \quad \forall n\ge2\\
&\sum_{n=1}^{\infty} \mu_n=1,
\eea
where $\widetilde{c} = c+ rB^2/C$ is the apparent competition.\\
In the simulation, we approximate $\mu_1$ by
$$\mu_1\simeq\frac{1}{\sum_{k=1}^{50} (b/\widetilde{c})^{k-1}k(k)!}\simeq 2,69\cdot10^{-5}.$$
\noindent In Figure \ref{fig:histo-proj} we consider the marginal distribution of prey and predators given by the previous simulations ($\varepsilon =1$ in the left column,  $\varepsilon=0.1$ the middle column and $\varepsilon=0.00001$ in the right column). These distributions are projections of the histograms in Figure \ref{fig:histo}. We compare them with the projections of the averaged distribution $\widebar{\pi}$ represented with a black line joining the points $(n,\mu_n)$.
For these histograms, we chose subdivisions centered in the integers for the prey population and in the $(h^*_n)_{\ge1}$ for the predator populations. 
\\
We observe a rapid convergence of the marginal distributions of prey and of predators, toward the marginal averaged distributions.
\begin{figure}[h!]
 \begin{tabular}{>{\centering\arraybackslash}m{.01\textwidth} >{\centering\arraybackslash}m{.3\textwidth}>{\centering\arraybackslash}m{.3\textwidth}>{\centering\arraybackslash}m{.3\textwidth}}
 &$\varepsilon=1$&$\varepsilon=0.1$&$\varepsilon=0.00001$\\
 \begin{turn}{90}preys\end{turn}&
  \subfloat[][]{\includegraphics[scale=0.24]{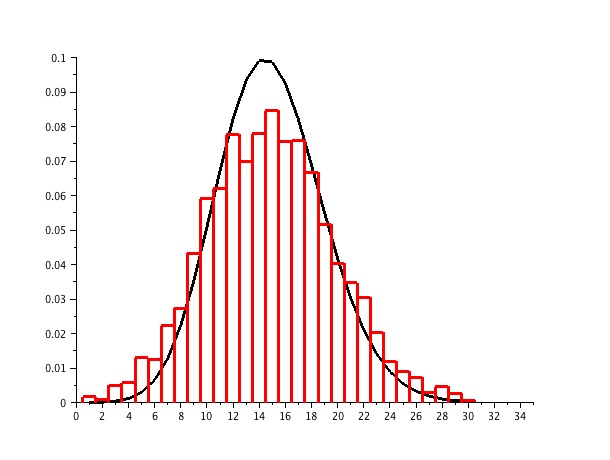}
 \label{fig:proie1}}
 &
  \subfloat[][]{\includegraphics[scale=0.24]{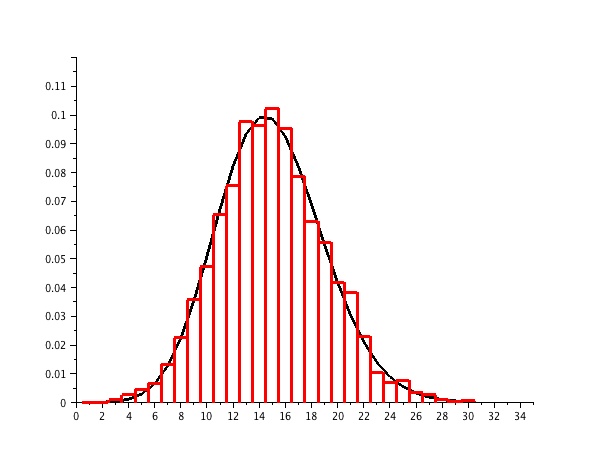}
 \label{fig:proie2}}
 &
  \subfloat[][]{\includegraphics[scale=0.24]{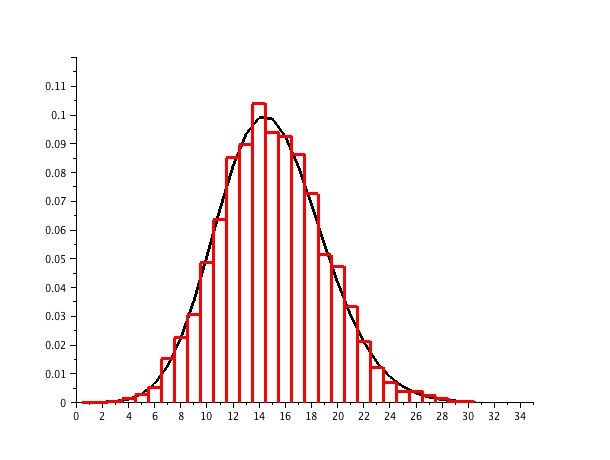}
 \label{fig:proie3}}\\
 \begin{turn}{90}predators\end{turn}
 &
  \subfloat[][]{\includegraphics[scale=0.24]{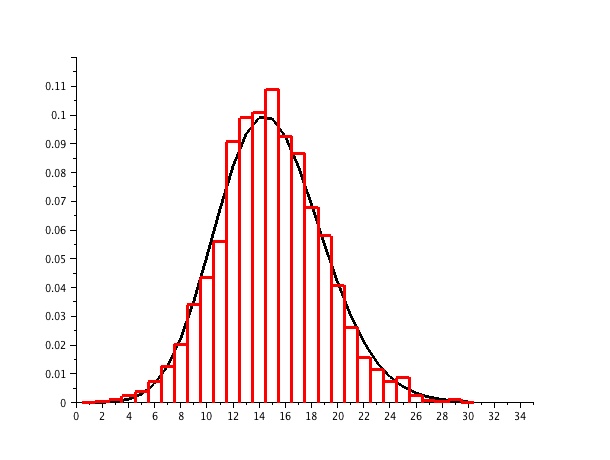}
 \label{fig:pred1}}
 &
  \subfloat[][]{\includegraphics[scale=0.24]{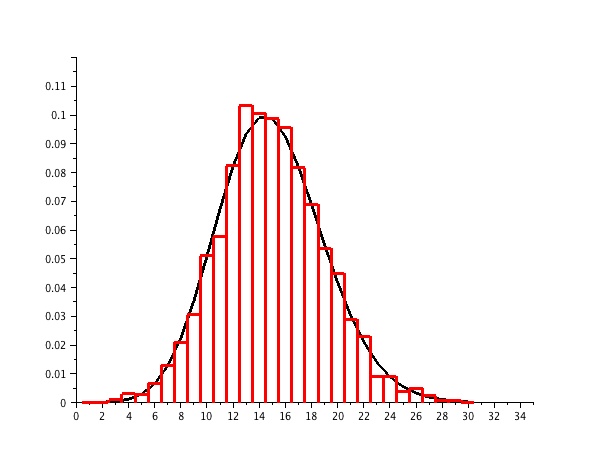}
 \label{fig:pred2}}
 &
  \subfloat[][]{\includegraphics[scale=0.24]{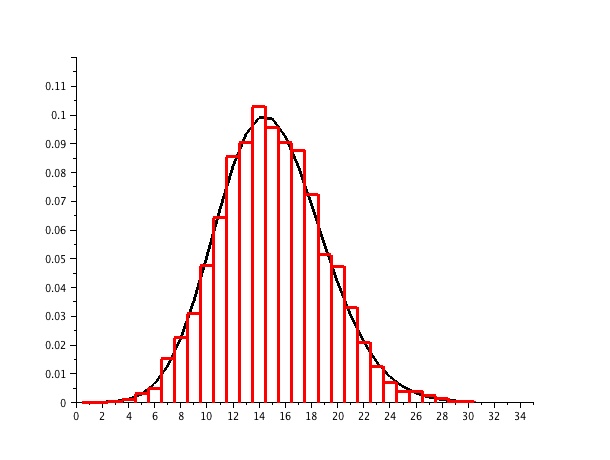}
 \label{fig:pred3}}
 \end{tabular}
 \caption[Approximation of the prey and the predator marginal invariant distributions for different values of $\varepsilon$]{\small{Marginal invariant distributions of the prey population in \subref{fig:proie1}-\subref{fig:proie2}-\subref{fig:proie3} and of the predator population \subref{fig:pred1}-\subref{fig:pred2}-\subref{fig:pred3} for different values of $\varepsilon$ ($\varepsilon =1$ in \subref{fig:proie1}-\subref{fig:pred1},  $\varepsilon=0.1$ in \subref{fig:proie2}-\subref{fig:pred2} and $\varepsilon=0.00001$ in \subref{fig:proie3}-\subref{fig:pred3}). These histograms are built from 3000 iterations of the community process $\Ze$ until time $1000$. The parameters are $b=0.4$, $d=0$,  $c=0.005$, $B= 0.02$, $r=2$, $D=0$ and $C=0.04.$}}
 \label{fig:histo-proj}
 \end{figure} 
\medskip

In these simulations, the sequence $(\pie)_{\varepsilon}$ seems to converge in law to $\pie$ but we have yet no mathematical proof of this result.

\section{Discussion}
\label{sec:discuss}
We introduced new models for prey-predator communities in which the predator dynamics is faster than the prey one. These stochastic models derive from a microscopic model of the community under the successive scalings of large predator population size and small predator mass. In both cases, we assume that the prey population size remains finite. This assumption corresponds to a biological reality as in the case of forest or experimental settings: for example in \cite{Umea} the authors study trees-insects communities in which the number of trees is of the order of a hundred. These models can be easily simulated and their dynamics is simple therefore they represent an alternative to slow-fast Lotka-Volterra dynamical systems \cite{RINALDI1992287}.\\

From an ecological point of view, we proved that our prey-predator process admits a long time distribution. Since the convergence to the invariant measure is exponentially fast, this invariant measure can be rapidly simulated using trajectories of the process. This distribution gives insight on the composition of prey-predator systems and can be used to compute the composition of trees-insects communities when the counting of insects may be difficult to do experimentally.
Moreover, in the situation where the mass ratio between prey individuals and predators is small, the limiting distribution admits an explicit expression, which is valuable for applications.\\

In this article, our approach was to consider first a large predator population size and then the small size of predators. The interest of these successive scalings was to introduce two different models corresponding to two different biological configurations. It would also be possible, and biologically relevant, to consider both scalings simultaneously by considering accelerated birth and death events as in \cite{champagnat2008individual} (Section 4.2). In this case other processes would arise as the coupling of a birth death process and a diffusion. The study of such processes will be considered in future works.\\

Knowing the long time behavior the prey-predator community is crucial to consider the evolutionary dynamics of the community. Indeed if we are interested in the phenotypic evolution of phenotype traits of prey and/or predators (as the development of specific tree defenses or of specialist insects strategies \cite{Umea}), one might be interested to consider the arrival of rare mutations as in adaptive dynamics settings (e.g. \cite{metz1996adaptive, DieckmannLaw96}). In this setting the stationary distribution represents the state of the resident community as a mutant arises. Therefore its knowledge is important to study the possible invasion of the mutant phenotype and to characterize the favourable strategies for prey or predators \cite{ChampagnatLambert07}.\\

Finally, even if our motivation was to describe the dynamics of ecological communities such as trees-insects communities, these models or methods could also be applied to very different questions such as epidemiology. For example, the study of the propagation of insect transmitted diseases such as Malaria impose to consider the interaction between insect and human populations \cite{aron1982population}. 
In this case the dynamics of insects is much faster than the human dynamics. Therefore piecewise deterministic processes could be introduce to model the dynamics of the disease and to study its outbreaks.

\section*{Acknowledgments}
I fully thank Sylvie M\'el\'eard for her continual guidance and her multiple suggestions on earlier versions of this article. I would also like to thank Gersende Fort, Carl Graham and Ga\"el Raoul for fruitful discussions during my work. I am grateful to the anonymous reviewers and associate editor whose comments improved the quality of this manuscript.\\
This article benefited from the support of the Chair ``Mod\'elisation Math\'ematique et Biodiversit\'e" of Veolia Environnement - \'Ecole Polytechnique - Museum National d'Histoire Naturelle - Fondation X.

{\setlength{\baselineskip}{0.9\baselineskip}
\bibliographystyle{plain} 

\begin{thebibliography}{10}

\bibitem{aldous1978stopping}
D.~Aldous.
\newblock Stopping times and tightness.
\newblock {\em The Annals of Probability}, 6(2):335--340, 1978.

\bibitem{allen2010introduction}
L.~J.S. Allen.
\newblock {\em An introduction to stochastic processes with applications to
  biology}.
\newblock CRC Press, 2010.

\bibitem{Amstrong80}
R.A. Armstrong and R.~McGehee.
\newblock Competitive exclusion.
\newblock {\em The American Naturalist}, 115(2):151--170, 1980.

\bibitem{aron1982population}
Joan~L Aron and Robert~M May.
\newblock The population dynamics of malaria.
\newblock In {\em The population dynamics of infectious diseases: theory and
  applications}, pages 139--179. Springer, 1982.

\bibitem{austin2008emergence}
T.~D. Austin.
\newblock The emergence of the deterministic hodgkin-huxley equations as a
  limit from the underlying stochastic ion-channel mechanism.
\newblock {\em The Annals of Applied Probability}, 18(4):1279--1325, 2008.

\bibitem{azema1967mesure}
J.~Azema, M.~Kaplan-Duflo, and D.~Revuz.
\newblock Mesure invariante sur les classes r{\'e}currentes des processus de
  markov.
\newblock {\em Probability Theory and Related Fields}, 8(3):157--181, 1967.

\bibitem{bayer2011existence}
C.~Bayer and K.~W{\"a}lde.
\newblock Existence, uniqueness and stability of invariant distributions in
  continuous-time stochastic models.
\newblock {\em Gutenberg School of Management and Economics: Discussion Paper
  Series}, 2011.

\bibitem{benaim2015qualitative}
M. Bena{\"\i}m, S. Le~Borgne, F. Malrieu, P-A.
  Zitt.
\newblock Qualitative properties of certain piecewise deterministic markov
  processes.
\newblock In {\em Annales de l'Institut Henri Poincar{\'e}, Probabilit{\'e}s et
  Statistiques}, volume~51, pages 1040--1075. Institut Henri Poincar{\'e},
  2015.

\bibitem{bouguet2013quantitative}
F. Bouguet.
\newblock Quantitative speeds of convergence for exposure to food contaminants.
\newblock {\em ESAIM: Probability and Statistics}, 19:482--501, 2015.

\bibitem{brown2004toward}
J.~H. Brown, J.~F. Gillooly, A.~P. Allen, V.~M. Savage, and G.~B. West.
\newblock Toward a metabolic theory of ecology.
\newblock {\em Ecology}, 85(7):1771--1789, 2004.

\bibitem{campillo2011stochastic}
F. Campillo, M. Joannides, and I. Larramendy-Valverde.
\newblock Stochastic modeling of the chemostat.
\newblock {\em Ecological Modelling}, 222(15):2676--2689, 2011.

\bibitem{CFM08}
N.~Champagnat, R.~Ferri{\`e}re, and S.~M{\'e}l{\'e}ard.
\newblock Unifying evolutionary dynamics: from individual stochastic processes
  to macroscopic models.
\newblock {\em Theoretical population biology}, 69(3):297--321, 2006.

\bibitem{ChampagnatLambert07}
N.~Champagnat and A.~Lambert.
\newblock Evolution of discrete populations and the canonical diffusion of
  adaptive dynamics.
\newblock {\em The Annals of Applied Probability}, 17(1):102--155, 2007.

\bibitem{champagnat2008individual}
N. Champagnat, R. Ferri{\`e}re, and S. M{\'e}l{\'e}ard.
\newblock From individual stochastic processes to macroscopic models in
  adaptive evolution.
\newblock {\em Stochastic Models}, 24(S1):2--44, 2008.

\bibitem{collet2013stochastic}
P.~Collet, S.~Martinez, S.~M{\'e}l{\'e}ard, and J.~San~Mart{\'\i}n.
\newblock Stochastic models for a chemostat and long-time behavior.
\newblock {\em Advances in Applied Probability}, 45(3):822--837, 2013.

\bibitem{costa2014}
M.~Costa, C.~Hauzy, N.~Loeuille, and S.~M\'el\'eard.
\newblock Stochastic eco-evolutionary model of a prey-predator community.
\newblock {\em Journal of Mathematical Biology}, 72(3):573--622, 2015.

\bibitem{costa:tel-01235792}
M. Costa.
\newblock {\em {Probabilistic and eco-evolutionary models for prey-predator communities}}.
  \newblock (\url{https://hal.archives-ouvertes.fr/tel-01235792/document}).
\newblock Theses, {Ecole Polytechnique}, September 2015.

\bibitem{costa2008stability}
O.~L.~V. Costa and F.~Dufour.
\newblock Stability and ergodicity of piecewise deterministic markov processes.
\newblock {\em SIAM Journal on Control and Optimization}, 47(2):1053--1077,
  2008.

\bibitem{crudu2012convergence}
A.~Crudu, A.~Debussche, A.~Muller, and O.~Radulescu.
\newblock Convergence of stochastic gene networks to hybrid piecewise
  deterministic processes.
\newblock {\em The Annals of Applied Probability}, 22(5):1822--1859, 2012.

\bibitem{crump1979some}
K.~S Crump and W-S.~C O'Young.
\newblock Some stochastic features of bacterial constant growth apparatus.
\newblock {\em Bulletin of Mathematical Biology}, 41(1):53--66, 1979.

\bibitem{Damuth81}
J.~Damuth.
\newblock Population density and body size in mammals.
\newblock {\em Nature}, 290:699--700, 1981.

\bibitem{davis1984piecewise}
M.~H.~A. Davis.
\newblock Piecewise-deterministic markov processes: A general class of
  non-diffusion stochastic models.
\newblock {\em Journal of the Royal Statistical Society. Series B.
  Methodological}, 46(3):353--388, 1984.

\bibitem{davis1993markov}
M.~H.~A. Davis.
\newblock {\em Markov Models \& Optimization}, volume~49.
\newblock CRC Press, 1993.

\bibitem{DieckmannLaw96}
U.~Dieckmann and R.~Law.
\newblock The dynamical theory of coevolution: a derivation from stochastic
  ecological processes.
\newblock {\em Journal of mathematical biology}, 34(5-6):579--612, 1996.

\bibitem{dufour1999stability}
F.~Dufour and O.~L.V. Costa.
\newblock Stability of piecewise-deterministic markov processes.
\newblock {\em SIAM Journal on Control and Optimization}, 37(5):1483--1502,
  1999.

\bibitem{Ethier&Kurtz}
N.~Ethier and T.Q. Kurtz.
\newblock {\em Markov Processes Characterization and Convergence}.
\newblock 1986.

\bibitem{fontbona2012quantitative}
J. Fontbona, H. Gu{\'e}rin, F. Malrieu.
\newblock Quantitative estimates for the long-time behavior of an ergodic
  variant of the telegraph process.
\newblock {\em Advances in Applied Probability}, 44(4):977--994, 2012.

\bibitem{FM04}
N.~Fournier and S.~M{\'e}l{\'e}ard.
\newblock A microscopic probabilistic description of a locally regulated
  population and macroscopic approximations.
\newblock {\em The Annals of Applied Probability}, 14(4):1880--1919, 2004.

\bibitem{genadot2014multi}
A.~Genadot.
\newblock A multi-scale study of a class of hybrid predator-prey models.
\newblock {\em arXiv preprint arXiv:1409.0376}, 2014.

\bibitem{grigorescu2014critical}
I. Grigorescu and M. Kang.
\newblock Critical scale for a continuous aimd model.
\newblock {\em Stochastic Models}, 30(3):319--343, 2014.

\bibitem{kurtz1992averaging}
T.~G. Kurtz.
\newblock Averaging for martingale problems and stochastic approximation.
\newblock In {\em Applied Stochastic Analysis}, pages 186--209. Springer, 1992.

\bibitem{kurtz2011equivalence}
T.~G. Kurtz.
\newblock Equivalence of stochastic equations and martingale problems.
\newblock In {\em Stochastic Analysis 2010}, pages 113--130. Springer, 2011.

\bibitem{last2004ergodicity}
G.~Last.
\newblock Ergodicity properties of stress release, repairable system and
  workload models.
\newblock {\em Advances in Applied Probability}, 36(2):471--498, 2004.

\bibitem{lotka}
A.~J. Lotka.
\newblock Elements of physical biology.
\newblock 1925.

\bibitem{ludwig1978qualitative}
D.~Ludwig, D.~D. Jones, and C.~S. Holling.
\newblock Qualitative analysis of insect outbreak systems: the spruce budworm
  and forest.
\newblock {\em The Journal of Animal Ecology}, 47(1):315--332, 1978.

\bibitem{metz1996adaptive}
J.~A.J. Metz, S.~A.H. Geritz, G.~Mesz{\'e}na, F.~J.A. Jacobs, and J.S.
  Van~Heerwaarden.
\newblock Adaptive dynamics, a geometrical study of the consequences of nearly
  faithful reproduction.
\newblock {\em Stochastic and spatial structures of dynamical systems},
  45:183--231, 1996.

\bibitem{meyntweedie1}
S.~P. Meyn and R.~L. Tweedie.
\newblock Stability of markovian processes i: Criteria for discrete-time
  chains.
\newblock {\em Advances in Applied Probability}, 24(3):542--574, 1992.

\bibitem{meyntweedie2}
S.~P. Meyn and R.~L. Tweedie.
\newblock Stability of markovian processes ii: Continuous-time processes and
  sampled chains.
\newblock {\em Advances in Applied Probability}, 25(3):487--517, 1993.

\bibitem{meyntweedie3}
S.~P. Meyn and R.~L. Tweedie.
\newblock Stability of markovian processes iii: Foster-lyapunov criteria for
  continuous-time processes.
\newblock {\em Advances in Applied Probability}, 25(3):518--548, 1993.

\bibitem{meyntweedie}
S.~P. Meyn and R.~L. Tweedie.
\newblock {\em Markov Chains and Stochastic Stability}.
\newblock Cambridge Mathematical Library. Cambridge University Press, 2009.

\bibitem{RINALDI1992287}
S.~Rinaldi and S.~Muratori.
\newblock Slow-fast limit cycles in predator-prey models.
\newblock {\em Ecological Modelling}, 61(3):287 -- 308, 1992.

\bibitem{Umea}
K.~M. Robinson, P.~K. Ingvarsson, S.~Jansson, and B.~R. Albrectsen.
\newblock Genetic variation in functional traits influences arthropod community
  composition in aspen (\textit{Populus tremula} l.).
\newblock {\em PLoS ONE}, 7(5):e37679, 05 2012.

\bibitem{rudnicki2007influence}
R.~Rudnicki and K.~Pich{\'o}r.
\newblock Influence of stochastic perturbation on prey--predator systems.
\newblock {\em Mathematical biosciences}, 206(1):108--119, 2007.

\bibitem{tweedie1994topological}
R.~L. Tweedie.
\newblock Topological conditions enabling use of harris methods in discrete and
  continuous time.
\newblock {\em Acta Applicandae Mathematica}, 34(1-2):175--188, 1994.

\bibitem{volterra}
V.~Volterra.
\newblock Fluctuations in the abundance of a species considered mathematically.
\newblock {\em Nature}, 118:558--560, 1926.

\end{thebibliography}

\par}

\end{document}